\newtheorem{theorem}{Theorem}
\newtheorem{lemma}[theorem]{Lemma}
\newtheorem{definition}{Definition}
\newtheorem{problem}[theorem]{Problem}
\newtheorem{corollary}[theorem]{Corollary}
\newtheorem{remark}[theorem]{Remark}
\DeclareMathOperator{\inter}{int}
\DeclareMathOperator{\bd}{bd}
\DeclareMathOperator{\conv}{conv}
\DeclareMathOperator{\x}{\mathbf{x}}
\DeclareMathOperator{\p}{\mathbf{p}}
\DeclareMathOperator{\q}{\mathbf{q}}
\DeclareMathOperator{\rr}{\mathbf{r}}
\DeclareMathOperator{\e}{\mathbf{e}}
\DeclareMathOperator{\cc}{\mathbf{c}}
\DeclareMathOperator{\aaa}{\mathbf{a}}
\DeclareMathOperator{\bb}{\mathbf{b}}
\DeclareMathOperator{\dd}{\mathbf{d}}
\DeclareMathOperator{\ee}{\mathbf{e}}
\DeclareMathOperator{\y}{\mathbf{y}}
\DeclareMathOperator{\z}{\mathbf{z}}
\DeclareMathOperator{\oo}{\mathbf{o}}
\DeclareMathOperator{\K}{\mathbf{K}}
\DeclareMathOperator{\LL}{\mathbf{L}}
\DeclareMathOperator{\R}{\mathbb{R}}
\DeclareMathOperator{\E}{\mathbb{E}}
\DeclareMathOperator{\B}{\mathbf{B}}
\DeclareMathOperator{\D}{\mathbf{D}}
\DeclareMathOperator{\csep}{c_{\rm sep}}
\DeclarePairedDelimiter\floor{\lfloor}{\rfloor}
\newcommand{\norm}[1]{\left\lVert#1\right\rVert}
\title{On contact graphs of totally separable domains 
\footnote{Keywords and phrases: Convex body, totally separable packing, contact graph, Hadwiger number, contact number, Erd\H{o}s-type distance problems, orthogonality in normed spaces, Auerbach basis, angle measure, polyomino. \newline \hspace*{.35cm} 2010 Mathematics Subject Classification: (Primary) 05C10, 52C15, (Secondary) 05B40, 46B20.}}
\author{K\'{a}roly Bezdek\thanks{Partially supported by a Natural Sciences and 
Engineering Research Council of Canada Discovery Grant.}, \ Muhammad A. Khan\thanks{Supported by Vanier CGS (NSERC) and Alberta Innovates Technology Futures.} \ and Michael Oliwa
}
\date{}
\begin{document}

\maketitle

\begin{abstract}
Contact graphs have emerged as an important tool in the study of translative packings of convex bodies. The contact number of a packing of translates of a convex body is the number of edges in the contact graph of the packing, while the Hadwiger number of a convex body is the maximum vertex degree over all such contact graphs. In this paper, we investigate the Hadwiger and contact numbers of totally separable packings of convex domains, which we refer to as the separable Hadwiger number and the separable contact number, respectively. We show that the separable Hadwiger number of any smooth convex domain is $4$ and the maximum separable contact number of any packing of $n$ translates of a smooth strictly convex domain is $\left\lfloor 2n - 2\sqrt{n}\right\rfloor$. Our proofs employ a characterization of total separability in terms of hemispherical caps on the boundary of a convex body, Auerbach bases of finite dimensional real normed spaces, angle measures in real normed planes, minimal perimeter polyominoes and an approximation of smooth $\oo$-symmetric strictly convex domains by, what we call, Auerbach domains. 
\end{abstract}

\section{Introduction}\label{sec:intro}
We denote the $d$-dimensional Euclidean space by $\E^d$ and write $\R^d$ instead when the norm is arbitrary or unknown. Let $\B^d$ denote the unit ball centered at the origin $\oo$ in $\E^d$. A {\it $d$-dimensional convex body} $\K$ is a compact convex subset of $\E^d$ with nonempty interior $\inter \K$. If $d=2$, then $\K$ is said to be a {\it convex domain}. If $\K = -\K$, where $-\K=\{-x: x\in \K\}$, then $\K$ is said to be {\it $\oo$-symmetric}. Furthermore, $\K$ is {\it centrally symmetric} if some translate of $\K$ is $\oo$-symmetric. Since the quantities we deal with here are affine invariants, we can mostly use the two terms interchangeably. Every $\oo$-symmetric convex body $\K$ in $\E^d$ induces a norm on $\R^d$, whose closed unit ball centered at $\oo$ is $\K$, given by 
\[\norm{\x}_{\K} = \inf\{\lambda>0: \x\in \lambda \K \}, 
\]
for every $\x\in \R^d$. We denote the corresponding normed space by $(\R^d , \norm{\cdot}_{\K})$. If $\K = \B^d$, then we denote the norm $\norm{\cdot}_{\B^d}$ simply by $\norm{\cdot}$. Thus $\E^d = (\R^d , \norm{\cdot})$. A $d$-dimensional convex body $\K$ is said to be {\it smooth} if at every point on the boundary $\bd \K$ of $\K$, the body $\K$ is supported by a unique hyperplane of $\E^d$ and {\it strictly convex} if the boundary of $\K$ contains no nontrivial line segment. Given $d$-dimensional convex bodies $\K$ and $\LL$, their Minkowski sum (vector sum) is denoted by $\K+\LL$ and defined as 
\[\K+\LL = \{\x+\y: \x\in \K, \y\in\LL\},
\]
which is a convex body in $\E^d$.

The {\it kissing number problem} asks for the maximum number $k(d)$ of non-overlapping translates of $\B^d$ that can touch $\B^d$ in $\E^d$. Clearly, $k(2)=6$. However, the value of $k(3)$ remained a mystery for several hundred years and caused a famous argument between Newton and Gregory in the 17th century.  To date, the only known kissing number values correspond to $d = 2, 3, 4, 8, 24$. For a survey of kissing numbers we refer the interested reader to \cite{Bo}. 

Generalizing the kissing number, the {\it Hadwiger number} or {\it the translative kissing number} $H(\mathbf{\K})$ of a $d$-dimensional convex body $\K$ is the maximum number of non-overlapping translates of $\K$ that all touch $\K$ in $\E^d$. Given the difficulty of the kissing number problem, determining Hadwiger numbers is highly nontrivial with few exact values known for $d\ge 3$. The best general upper and lower bounds on $H(\K)$ are due to Hadwiger \cite{Ha} and Talata \cite{Ta} respectively, and can be expressed as 
\begin{equation}\label{eq:hadwiger}
2^{cd}\le H(\K)\le 3^{d} - 1, 
\end{equation}
where $c$ is an absolute constant and equality holds in the right inequality if and only if $\K$ is an affine $d$-dimensional cube. For more on Hadwiger number and its relatives, we refer the reader to \cite{Be06}. 

Let $\cal{P}$ be a packing of translates of a convex body $\K$ in $\E^d$ (i.e., a family of non-overlapping translates of $\K$ in $\E^d$). The {\it contact graph} of $\cal{P}$ is the (simple) graph whose vertices correspond to the packing elements with two vertices joined by an edge if and only if the two corresponding packing elements touch each other. The number of edges of a contact graph is called the {\it contact number} of the underlying packing. The {\it contact number problem} asks for the largest number $c(\K, n, d)$ of edges in any contact graph of a packing of $n$ translates of $\K$ \cite{BeKh}. If $\K=\B^d$, we simply write $c(n, d)$ instead of $c(\B^{d}, n, d)$. The problem of determining $c(\K , n, d)$ is equivalent to the Erd\H{o}s-type repeated shortest distance problem in normed spaces proposed by Ulam \cite{Br, Er}, which asks for the largest number of repeated shortest distances among $n$ points in $(\R^{d}, \norm{\cdot}_{\K})$. Another way to look at the contact number problem is to think of it as the discrete analogue of the densest packing problem. Moreover, it has been observed by materials scientists that at low temperatures, particles of self-assembling materials such as colloids tend to form clusters so as to maximize the contact number \cite{ArMaBr, Ho}. The reader can refer to the very recent survey \cite{BeKh} on contact numbers and their applications by the first two authors for details. 

A packing of translates of a convex domain $\K$ in $\E^2$ is said to be {\it totally separable} if any two packing elements can be separated by a line of $\mathbb{E}^{2}$ disjoint from the interior of every packing element. This notion was introduced by G. Fejes T\'{o}th and L. Fejes T\'{o}th \cite{FTFT73} and has attracted significant attention. We can define a totally separable packing of translates of a $d$-dimensional convex body $\K$ in a similar way by requiring any two packing elements to be separated by a hyperplane in $\mathbb{E}^{d}$ disjoint from the interior of every packing element \cite{BeKh, BeSzSz}. One can think of a totally separable packing as a packing with barriers. Practical examples include packaging of identical fragile convex objects using cardboard separators and layered arrangements of interacting particles.  

In this paper, we introduce the totally separable analogues of the Hadwiger number and the contact number for $d$-dimensional convex bodies, which we denote by $H_{\rm sep}(\cdot)$  and $c_{\rm sep}(\cdot, n, d)$, respectively, and then mostly study them in the plane. Note that the quantity $c_{\rm sep}(n, d)=c_{\rm sep}(\B^d , n, d)$ was investigated in \cite{BeSzSz}. Some results obtained in that paper will be stated and used in the sequel. However, not much is known about the more general quantities $H_{\rm sep}(\K)$ and $c_{\rm sep}(\K, n, 2)$, where $\K$ is any planar convex domain. 

The main results of this paper can be summarized as below. 
\begin{theorem}\label{mega}
\item{(A)} $H_{\rm sep}(\K) =  4$, for any smooth convex domain $\K$ in $\E^2$.  
\item{(B)} $\csep(\K, n, 2) = \left\lfloor 2n - 2\sqrt{n}\right\rfloor$, for any smooth strictly convex domain $\K$ in $\E^2$ and $n\ge 2$. 
\end{theorem}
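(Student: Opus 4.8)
The plan is to reduce Theorem \ref{mega}(B) to the extremal theory of polyominoes by showing that the contact graph of every totally separable packing of translates of a smooth strictly convex domain is an $n$-vertex subgraph of the integer grid $\mathbb{Z}^2$, and that, conversely, an edge-maximal near-square polyomino can be realized as such a packing. The crucial arithmetic observation is that the target value equals the Harary--Harborth maximum number of bonds in an $n$-cell polyomino, since $\lfloor 2n-2\sqrt{n}\rfloor = 2n-\lceil 2\sqrt{n}\rceil$ (because $\lfloor -x\rfloor=-\lceil x\rceil$), and this number is exactly the maximum of edges over all $n$-vertex subgraphs of the grid graph $\mathbb{Z}^2$. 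Establishing the grid correspondence therefore settles the upper and lower bounds simultaneously.

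For the upper bound I would first invoke Theorem \ref{mega}(A): in any totally separable packing each translate of $\K$ touches at most four others. The heart of the argument is to promote this local degree bound to a global grid embedding. Since $\K$ is strictly convex, two touching translates meet in a single point, and since $\K$ is smooth that point carries a unique common tangent line, which by total separability must serve as the separating line and hence avoid the interior of every packing element. Using an Auerbach basis of $(\R^2,\norm{\cdot}_{\K})$ to split these separating lines into two conjugate direction classes, I would label each contact as ``horizontal'' or ``vertical'' — strict convexity and smoothness forcing at most one contact per signed direction, in agreement with (A) — and assign to each translate integer coordinates so that touching translates differ by a unit grid vector. The delicate point is the global consistency of this coordinatization, i.e.\ that no ``holonomy'' arises around a cycle of contacts; here an angle-measure argument in the normed plane, summing the turning across each elementary contact four-cycle, would rule out frustrated non-grid patterns and yield the embedding into $\mathbb{Z}^2$. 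The combinatorial Harary--Harborth bound then gives at most $\lfloor 2n-2\sqrt{n}\rfloor$ contacts.

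For the lower bound I would exhibit a totally separable packing attaining this count. Taking integer side lengths $a\times b$ with $ab\ge n$ and $a,b$ as equal as possible (the dimensions minimizing the polyomino perimeter), I would place translates of $\K$ on the corresponding grid, using the two Auerbach directions as axes and filling $n$ cells of a near-square block. Strict convexity and smoothness guarantee that horizontally adjacent translates are separated by a line parallel to the second Auerbach direction and vertically adjacent ones by a line parallel to the first; the Auerbach (norming) property is precisely what forces these separating lines to miss every other cell, so the packing is totally separable and realizes $\lfloor 2n-2\sqrt{n}\rfloor$ contacts. To treat a general smooth strictly convex $\K$, whose Auerbach directions need not give a literally rectangular grid, I would prove the statement first for the special class of Auerbach domains and then pass to the general case through the approximation of smooth $\oo$-symmetric strictly convex domains by Auerbach domains, using that the discrete quantity $\csep(\K,n,2)$ is stable under sufficiently fine approximation.

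I expect the main obstacle to be the global grid embedding in the upper bound. The local ``at most four neighbours'' statement is immediate from (A), but upgrading it to a coherent $\mathbb{Z}^2$-coordinatization of the \emph{entire} packing — excluding holonomy around contact cycles and non-grid adjacency configurations — is where strict convexity, smoothness, and the normed-plane angle measure must be combined most carefully; the lower-bound construction is, by comparison, an explicit verification once the Auerbach-domain approximation is in place.
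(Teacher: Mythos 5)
Your upper bound rests entirely on the claim that the contact graph of \emph{every} totally separable packing of translates of a smooth strictly convex domain embeds as a subgraph of $\mathbb{Z}^2$, and this is a genuine gap. The specific mechanism you propose -- sorting contacts into two ``conjugate direction classes'' coming from an Auerbach basis and assigning integer coordinates so that touching translates differ by a unit grid vector -- fails already for circular disks: the three unit disks centered at $(0,0)$, $(2,0)$ and $(2+\sqrt{2},\sqrt{2})$ form a totally separable packing (the lines $x=1$ and $x+y=2+\sqrt{2}$ witness separability), yet its two contacts occur in directions making an angle of $45^{\circ}$, so no labeling of contacts by two Birkhoff-conjugate directions exists. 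In general, contact directions in a totally separable packing are not confined to any two classes: chains may bend by any angle less than $90^{\circ}$. What then remains of your argument is the bare assertion that the \emph{abstract} contact graph is isomorphic to a grid subgraph. That assertion is a strong structural theorem in its own right -- one would have to exclude, among other things, odd cycles (the grid is bipartite), and even ruling out $5$-cycles of disks requires a nontrivial computation -- and your appeal to ``summing the turning across each elementary contact four-cycle'' presupposes that the internal faces are four-cycles, which is precisely what is unproven. Notably, the paper never establishes any grid embedding: its Euler-formula count explicitly allows internal faces with $5,6,\dots$ sides and bounds the edge count without any global coordinatization.

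The paper's actual route is: (i) the degree bound from part (A) -- which, incidentally, you invoke as given although it is half of the statement to be proven and needs its own argument (Lemmas \ref{caps}, \ref{hull} and Theorem \ref{2Dmain}); (ii) for smooth $B$-domains, a Harborth-style count (Theorem \ref{circular}): total separability and smoothness force every internal face to have at least $4$ sides, a $B$-measure makes the boundary angle at a degree-$j$ vertex at least $(j-1)\pi/2$, and Euler's formula plus an induction (whose $2$-connectedness step is itself delicate, handled by polyomino surgery and one exceptional $7$-vertex graph) yields $\csep(n)^2-4n\,\csep(n)+(4n^2-4n)\ge 0$ and hence the bound; (iii) a transfer from $A$-domains to arbitrary smooth strictly convex domains via Theorem \ref{dense} and Corollary \ref{strictly}, in which the approximating $A$-domain is constructed so that its boundary \emph{contains all the contact positions} of a fixed maximal packing, giving $\csep(\K_{\oo},n,2)\le\csep(\mathbf{A}',n,2)$. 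This last point also undermines your closing remark that $\csep(\K,n,2)$ is ``stable under sufficiently fine approximation'': contact numbers are not continuous in the Hausdorff metric (contacts are destroyed by arbitrarily small perturbations), so the needed inequality only follows from the paper's tailor-made construction, not from approximation per se. By contrast, your lower bound is essentially sound and is the paper's own argument (Auerbach lattice packings inscribed in minimum-perimeter polyominoes, Lemma \ref{digital1} and Remark \ref{smooth-contact}); it needs no approximation step at all, since the Auerbach lattice construction works directly for any convex domain.
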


We prove part (A) in section \ref{sec:hadwiger} and part (B) in section \ref{sec:plane}. The tools used in proving each statement are also discussed and developed in the corresponding sections. We note that Theorem \ref{mega} (A) and (B) generalize the classical result $H_{\rm sep}(\B^2 ) = 4$ and the result $\csep(n, 2) = \left\lfloor 2n - 2\sqrt{n}\right\rfloor$, $n\ge 2$, proved by Bezdek, Szalkai and Szalkai \cite{BeSzSz}, respectively. 

We adopt the following notations. For $\x\ne \y \in \E^d$, we denote the closed (resp. open) line segment in $\E^d$ with end points $\x$ and $\y$ by $[\x, \y]$ (resp. $(\x, \y)$). Also given a centrally symmetric convex domain $\K$ and $\x\ne \y\in \bd \K$, we denote the smaller (in the norm $\norm{\cdot}_{\K}$) of the two closed (resp. open) arcs on $\bd \K$ with end points $\x$ and $\y$ by $[\x, \y]_{\K}$ (resp. $(\x, \y)_{\K}$). Ties are broken arbitrarily. All arcs considered in this paper are non-trivial, that is, different from a point.

\section{Separable Hadwiger numbers}\label{sec:hadwiger}
We begin with a lemma of Minkowski that is often used to express questions about packings of translates of an arbitrary convex body in terms of the corresponding questions for a packing of translates of an $\oo$-symmetric convex body. Given a $d$-dimensional convex body $\K$, we denote the {\it Minkowski symmetrization} of $\K$ by $\K_{\oo}$ and define it to be 
\[\K_{\oo} = \frac{1}{2}(\K + (- \K)) = \frac{1}{2}(\K - \K) = \frac{1}{2}\{\x - \y : \x , \y \in \K\}. 
\]
Clearly, $\K_{\oo}$ is an $\oo$-symmetric $d$-dimensional convex body. Minkowski \cite{Mi} showed that if $\x_{1} + \K$ and $\x_{2} + \K$ are two translates of a convex body $\K$, then they are non-overlapping (resp., touching) if and only if $\x_{1} + \K_{\oo}$ and $\x_{2} + \K_{\oo}$ are non-overlapping (resp., touching). Thus if $\K$ is a convex body and ${\cal{P}}=\{\x_1+\K , \ldots, \x_n + \K\}$ is a packing of translates of $\K$ in $\E^d$, then ${\cal{P}}_{\oo}=\{\x_1+\K_{\oo} , \ldots, \x_n + \K_{\oo}\}$ is a packing and vice versa. Moreover, the contact graphs of $\cal{P}$ and ${\cal{P}}_{\oo}$ are identical. Here we prove the following additional property of Minkowski symmetrization.

\begin{figure}[t]
\centering
\includegraphics[scale=1.1]{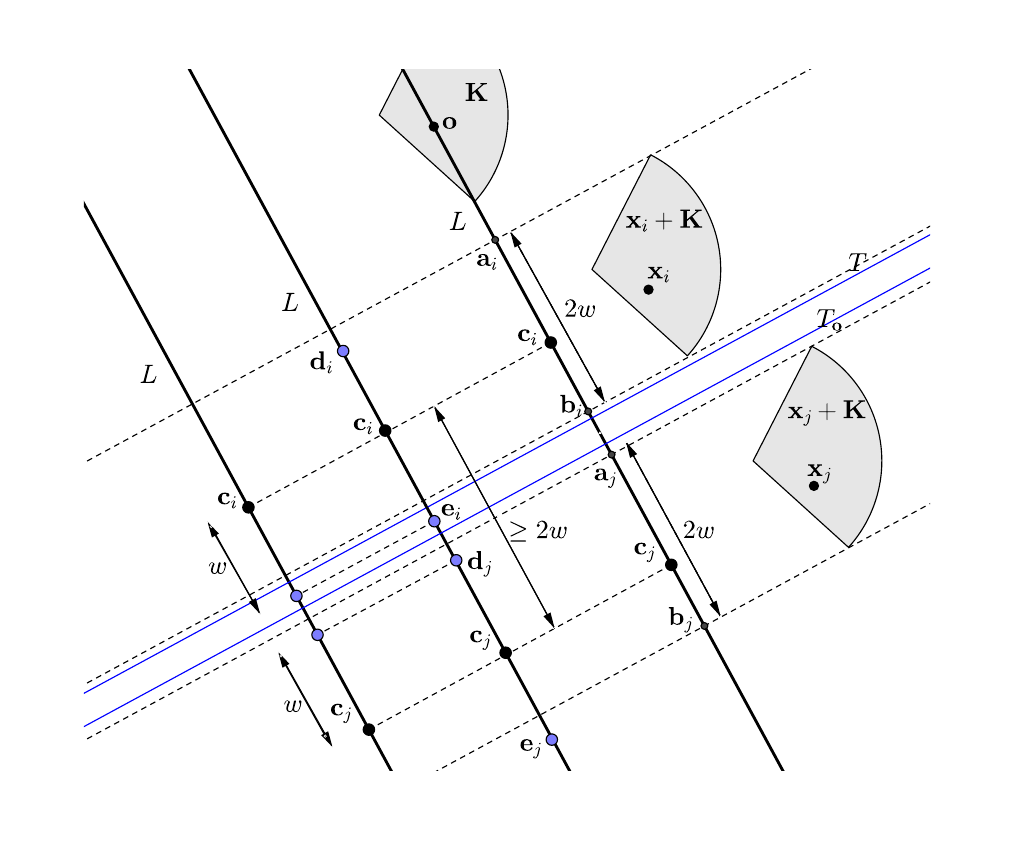}
\vspace{-10mm}
\caption{Lemma \ref{minkowski} -- the totally separable form of Minkowski's lemma. }
\label{fig:mink}
\end{figure}

\begin{lemma}\label{minkowski}
Let $\K$ be a convex body and ${\cal{P}} = \{\x_1 + \K , \ldots, \x_n + \K\}$ be a set of translates of $\K$ in $\E^d$. Then $\cal{P}$ forms a totally separable packing if and only if ${\cal{P}}_{\oo} = \{\x_1 + \K_{\oo} , \ldots, \x_n + \K_{\oo}\}$ is a totally separable packing of translates of $\K_{\oo}$. 
\end{lemma}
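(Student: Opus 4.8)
The plan is to reduce total separability to a one-dimensional condition on orthogonal projections and then exploit the fact that $\K$ and $\K_{\oo}$ have the same width in every direction. For a unit vector $\vv$, write $h_{\K}(\vv)=\max_{\y\in\K}\langle\y,\vv\rangle$ for the support function; a direct computation gives the key identity $h_{\K_{\oo}}(\vv)=\tfrac12\big(h_{\K}(\vv)+h_{\K}(-\vv)\big)$, so $\K$ and $\K_{\oo}$ have identical width $h_{\K}(\vv)+h_{\K}(-\vv)$ in direction $\vv$, while $\K_{\oo}$ is symmetric so that $h_{\K_{\oo}}(-\vv)=h_{\K_{\oo}}(\vv)$. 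By Minkowski's lemma, ${\cal P}$ is a packing if and only if ${\cal P}_{\oo}$ is, so throughout I may assume both are packings and I only need to compare their separation properties.

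Next I would fix a direction $\vv$ and project the whole configuration orthogonally onto the line spanned by $\vv$. The element $\x_k+\K$ projects to the closed interval $I_k^{\K}(\vv)=[\langle\x_k,\vv\rangle-h_{\K}(-\vv),\ \langle\x_k,\vv\rangle+h_{\K}(\vv)]$, whereas $\x_k+\K_{\oo}$ projects to $I_k^{\K_{\oo}}(\vv)=[\langle\x_k,\vv\rangle-h_{\K_{\oo}}(\vv),\ \langle\x_k,\vv\rangle+h_{\K_{\oo}}(\vv)]$. The key observation is that these two intervals have the \emph{same} length $h_{\K}(\vv)+h_{\K}(-\vv)$ and that $I_k^{\K}(\vv)$ is obtained from $I_k^{\K_{\oo}}(\vv)$ by the translation $s(\vv)=\tfrac12\big(h_{\K}(\vv)-h_{\K}(-\vv)\big)$, which is the \emph{same} for every $k$. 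A hyperplane with outer normal $\vv$, say $\{\langle\cdot,\vv\rangle=c\}$, separates $\x_i+\K$ from $\x_j+\K$ and is disjoint from the interior of every $\x_k+\K$ precisely when the number $c$ lies weakly between $I_i^{\K}(\vv)$ and $I_j^{\K}(\vv)$ and avoids the open interval $\inter I_k^{\K}(\vv)$ for all $k$. Because all intervals $I_k^{\K}(\vv)$ are a common translate (by $s(\vv)$) of the intervals $I_k^{\K_{\oo}}(\vv)$, the value $c$ has this property for ${\cal P}$ if and only if $c-s(\vv)$ has the corresponding property for ${\cal P}_{\oo}$. Hence, for each fixed direction $\vv$, a separating hyperplane of the required kind exists for the pair $\{i,j\}$ in ${\cal P}$ if and only if one exists for the pair $\{i,j\}$ in ${\cal P}_{\oo}$.

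Finally I would assemble the statement. Total separability asserts that every pair of elements admits a separating hyperplane disjoint from all interiors, that is, such a hyperplane in \emph{some} direction. Since the previous step establishes the equivalence direction-by-direction (and therefore after taking the union over all $\vv$), the pairwise separation condition for ${\cal P}$ matches that for ${\cal P}_{\oo}$, and taking the conjunction over all pairs $\{i,j\}$ yields that ${\cal P}$ is totally separable if and only if ${\cal P}_{\oo}$ is. I expect the main obstacle to be the clause that the separating hyperplane must avoid the interiors of \emph{all} elements, not merely the two being separated: a priori a third element could block a candidate hyperplane differently in the two packings. The resolution is exactly the uniformity of the shift $s(\vv)$ — since passing from ${\cal P}$ to ${\cal P}_{\oo}$ rigidly translates the entire family of projected intervals in each direction, the pattern of admissible cut points is preserved, so no blocking can be created or destroyed.
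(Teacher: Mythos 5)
Your proposal is correct and takes essentially the same route as the paper's proof: both reduce separability to orthogonal projections onto the normal direction of a candidate hyperplane and exploit the fact that Minkowski symmetrization preserves the width of $\K$ in every direction, so that the projected intervals of all packing elements undergo one common rigid translation. The paper packages this by choosing the two elements nearest the given separator $T$ on either side and sliding $T$ to a translate $T_{\oo}$ in the resulting gap, whereas your uniform shift $s(\vv)$ handles the requirement of avoiding \emph{all} interiors in one stroke -- a slightly cleaner bookkeeping of the same idea.
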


\begin{proof}
Clearly, $\x_i + \K_{\oo}$ is a centrally symmetric convex body with center $\x_i$, for all $1\le i\le n$. For simplicity, let $\oo\in \inter \K$. Then $\x_i \in \x_i +\inter \K$, for all $1\le i\le n$. 

Assume that the packing $\cal{P}$ is totally separable. Then for any distinct $\x_p +\K , \x_q +\K \in \cal{P}$, there exists a hyperplane $T$ of $\E^d$ that separates $\x_p +\K$ and $\x_q +\K$ and is disjoint from the interiors of all packing elements in ${\cal{P}}$. Thus $T$ partitions $\cal{P}$ into two subsets, each containing the elements of $\cal{P}$ that lie in one closed half-space of $\E^d$ bounded by $T$. Arbitrarily call one of these closed half-spaces the {\it left} of $T$, while the other the {\it right} of $T$. Let $\x_i +\K \in \cal{P}$ be to the left of $T$ that is closest to $T$ (in the norm $\norm{\cdot}$) and $\x_j + \K \in \cal{P}$ be to the right of $T$ that is closest to $T$, breaking ties arbitrarily. Let $L$ be the line orthogonal to $T$ and passing through $\oo$. We project the packings $\cal{P}$ and $\cal{P}_{\oo}$ on to $L$ using the orthogonal projection map $\pi:\E^d \to L$.   

We refer to Figure \ref{fig:mink}. Let $[\aaa_i , \bb_i ] = \pi(\x_i + \K )$ and $[\aaa_j ,\bb_j ] =\pi(\x_j + \K )$. Also let $\cc_i =\pi(\x_i )$ and $\cc_j = \pi(\x_j )$. Finally, if $\mathbf{d}_i = \mathbf{c}_i - \frac{1}{2}(\mathbf{b}_i - \mathbf{a}_i )$, $\mathbf{d}_j = \mathbf{c}_j - \frac{1}{2}(\mathbf{b}_j - \mathbf{a}_j )$, $\mathbf{e}_i = \mathbf{c}_i + \frac{1}{2}(\mathbf{b}_i - \mathbf{a}_i )$, $\mathbf{e}_j = \mathbf{c}_j + \frac{1}{2}(\mathbf{b}_j - \mathbf{a}_j )$, then $[\dd_i , \ee_i ] = \pi(\x_i + \K_{\oo})$ and $[\dd_j , \ee_j ] = \pi(\x_j + \K_{\oo})$. Let $\norm{\bb_i - \aaa_i } = \norm{\bb_j - \aaa_j } = 2w$. Then $\norm{\cc_j - \cc_i } \ge 2w$ and therefore, the closed line segments $[\mathbf{d}_i , \mathbf{e}_i ]$ and $[\mathbf{d}_j , \mathbf{e}_j ]$ do not overlap. Thus there exists a translate $T_{\oo}$ of $T$ that separates $\x_i +\K_{\oo}$ and $\x_j +\K_{\oo}$ and is disjoint from the interiors of all packing elements in ${\cal{P}}_{\oo}$. Finally, $T_{\oo}$ separates $\x_p +\K_{\oo}$ and $\x_q + \K_{\oo}$.  

Conversely, assume that the packing $\cal{P}_{\oo}$ is totally separable. Then there exists a hyperplane $T_{\oo}$ of $\E^d$ that separates $\x_p + \K_{\oo}$ and $\x_q +\K_{\oo}$ and is disjoint from the interiors of all packing elements in ${\cal{P}}_{\oo}$. Arguing on similar lines as above, let $\x_i +\K_{\oo}$ be an element of $\cal{P}_{\oo}$ to the left of $T_{\oo}$ that is closest to $T_{\oo}$ and $\x_j + \K_{\oo}$ be an element of $\cal{P}_{\oo}$ to the right of $T_{\oo}$ that is closest to $T_{\oo}$. Then by arguing as above, we can easily show that there is a translate $T$ of $T_{\oo}$ that separates $\x_i + \K$ and $\x_j + \K$ and is disjoint from the interiors of all packing elements in ${\cal{P}}$. Moreover, it also separates $\x_p + \K$ and $\x_q + \K$. 
\end{proof}

We now define a totally separable analogue of the Hadwiger number, which we call the {\it separable Hadwiger number}. 

\begin{definition}
Let $\K$ be a convex body in $\E^d$. We define the separable Hadwiger number $H_{\rm sep}(\K)$ of $\K$ as the maximum number of translates of $\K$ that all touch $\K$ and, together with $\K$, form a totally separable packing in $\E^d$. 
\end{definition} 

Although the above definition is very natural, it is not very helpful in determining the exact values or good estimates of $H_{\rm sep}(\K)$. For smooth $\oo$-symmetric convex bodies we will find it advantageous to use an alternative but equivalent characterization. We begin by defining what we mean by a {\it hemispherical cap} on the boundary of a smooth $\oo$-symmetric convex body.  

\begin{definition}\label{def:caps}
Let $\K_{\oo}$ be a smooth $\oo$-symmetric convex body in $\E^d$. Let $\p\in \bd \K_{\oo}$ and $T_{\p}$ denote the unique supporting hyperplane of $\K_{\oo}$ at $\p$. Let $T'_{\p}$ be the hyperplane parallel to $T_{\p}$ passing through the origin and let $P_{\p}$ be the half-open plank of $\E^d$ bounded by $T_{\p}$ and $T'_{\p}$, but excluding $T'_{\p}$. We say that $C(\p):=\bd \K_{\oo} \cap P_{\p}$ is the {\it hemispherical cap (or simply the cap) on $\bd \K_{\oo}$ centered at $\p$}. We define the {\it boundary of the cap $C(\p)$} as $\bd\K_{\oo} \cap T'_{\p}$ and denote it by $\partial C(\p)$. 
\end{definition}

\begin{figure}[t]
\centering
\includegraphics[scale=0.4]{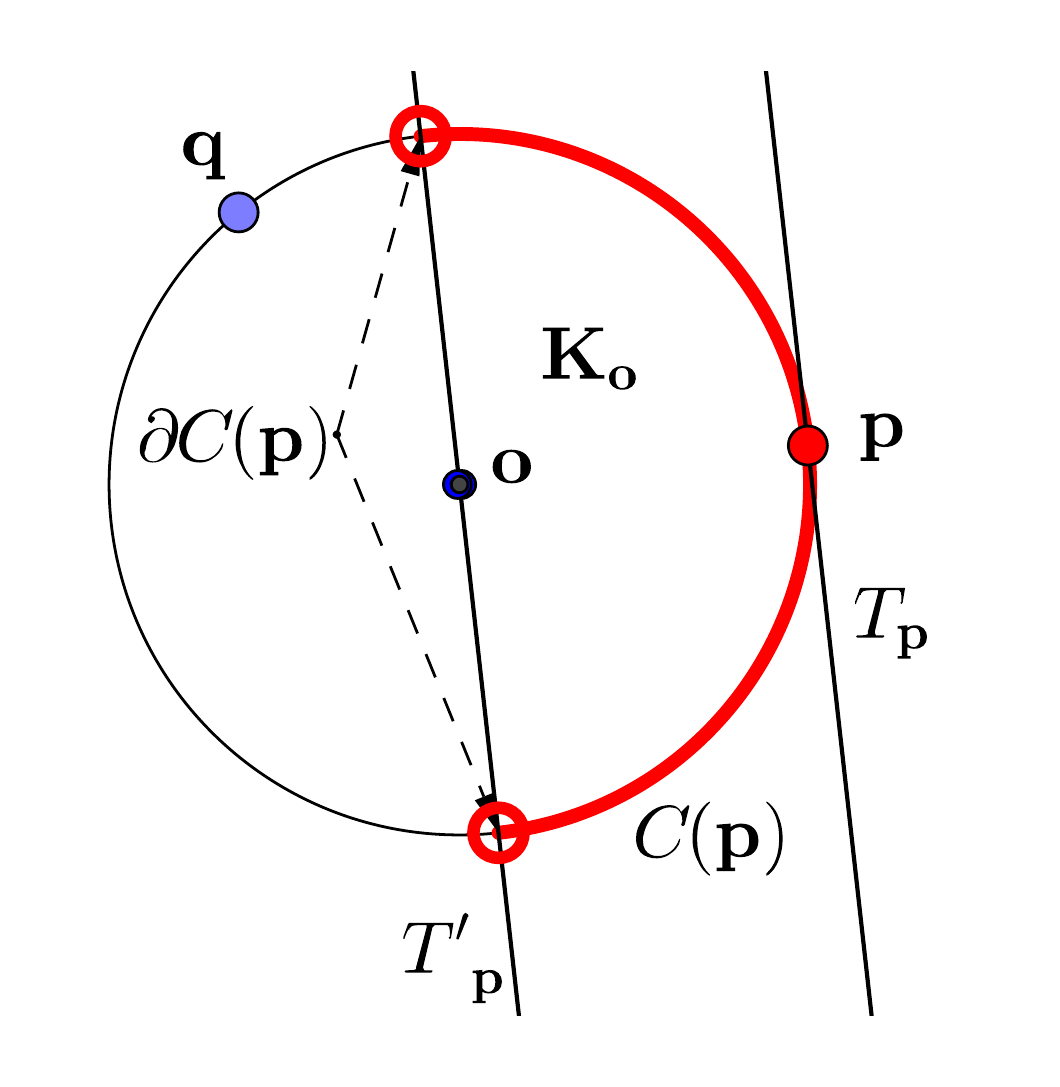}
\vspace{-5mm}
\caption{The cap $C(\p)$ on a convex domain $\K_{\oo}$ (a circular disk in this case) centered at $\p$, its boundary $\partial C(\p)$ and a separable point set $\{\p, \q\}$ on $\bd \K_{\oo}$.}
\label{fig:caps}
\end{figure}

Figure \ref{fig:caps} illustrates Definition \ref{def:caps} for a planar convex domain. The reason why we are defining caps only for smooth convex bodies is obvious. We wish to avoid situations where multiple hyperplanes support $\K_{\oo}$ at a single boundary point. Also $\oo$-symmetry is used in defining the caps. We now introduce the notion of a {\it separable point set} on the boundary of a smooth $\oo$-symmetric convex body $\K_{\oo}$. 

\begin{definition}
Let $\K_{\oo}$ be a smooth $\oo$-symmetric convex body in $\E^d$. A set $S\subseteq \bd \K_{\oo}$ is called a {\it separable point set} if $\x\notin C(\y)$ and $\y\notin C(\x)$, for any $\x, \y\in S$. 
\end{definition}

Next, we characterize total separability of translative packings of a smooth convex body $\K_{\oo}$ in terms of separable point sets defined on $\bd \K_{\oo}$. 

\begin{lemma}\label{caps}
Let $\K_{\oo}$ be a smooth $\oo$-symmetric convex body in $\E^d$. Let $\p_1 , \p_2 \in \bd \K_{\oo}$ and let $\K_1 = 2\p_1 + \K_{\oo}$ and $\K_2 = 2\p_2 + \K_{\oo}$. Then $\K_{\oo}$, $\K_1$ and $\K_2$ form a totally separable packing if and only if $\p_{1}\notin C(\p_{2})$ and $\p_{2}\notin C(\p_{1})$. 
\end{lemma}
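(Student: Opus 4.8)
The plan is to translate the geometric separability condition into two linear inequalities involving the outer normals of $\K_{\oo}$ at $\p_1$ and $\p_2$, and then verify the equivalence analytically. I write $\nu_i$ for the (Euclidean) outer unit normal to $\K_{\oo}$ at $\p_i$ and $h_i=\langle\p_i,\nu_i\rangle>0$ for the corresponding support value, so that $T_{\p_i}=\{\x:\langle\x,\nu_i\rangle=h_i\}$ and $T'_{\p_i}=\{\x:\langle\x,\nu_i\rangle=0\}$. Since $\langle\p_2,\nu_1\rangle\le h_1$ always holds, unwinding Definition \ref{def:caps} gives the clean reformulation $\p_2\in C(\p_1)\iff\langle\p_2,\nu_1\rangle>0$, and symmetrically for $\p_1\in C(\p_2)$. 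First I would record the elementary facts that $\K_{\oo}$ and $\K_1=2\p_1+\K_{\oo}$ touch at $\p_1$ and that, by $\oo$-symmetry, $T_{\p_1}$ is their common supporting hyperplane, with $\K_{\oo}\subseteq\{\langle\cdot,\nu_1\rangle\le h_1\}$ and $\K_1\subseteq\{\langle\cdot,\nu_1\rangle\ge h_1\}$; likewise for $\K_2$ and $T_{\p_2}$.

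For the sufficiency direction I assume $\langle\p_2,\nu_1\rangle\le 0$ and $\langle\p_1,\nu_2\rangle\le 0$ and exhibit explicit separators. A short computation shows that $\inter\K_2=2\p_2+\inter\K_{\oo}$ lies in $\{\langle\cdot,\nu_1\rangle<2\langle\p_2,\nu_1\rangle+h_1\}\subseteq\{\langle\cdot,\nu_1\rangle< h_1\}$, so $T_{\p_1}$ is disjoint from $\inter\K_2$ and, together with the inclusions above, $T_{\p_1}$ separates $\K_{\oo}$ from $\K_1$ and also $\K_2$ from $\K_1$ while meeting none of the three interiors. Symmetrically, $T_{\p_2}$ separates $\K_{\oo}$ from $\K_2$ disjointly from all three interiors. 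Thus every pair is separated by an admissible hyperplane and the packing is totally separable.

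For the necessity direction I argue contrapositively: assuming (say) $\p_2\in C(\p_1)$, i.e. $\langle\p_2,\nu_1\rangle>0$, I show the configuration is not totally separable. The key structural observation is that, because $\K_{\oo}$ is smooth and $\K_{\oo}\cap\K_1\ni\p_1$, any hyperplane separating $\K_{\oo}$ and $\K_1$ must pass through $\p_1$ and support $\K_{\oo}$ there, hence must equal $T_{\p_1}$; so total separability forces $T_{\p_1}$ to avoid $\inter\K_2$, which by the interval computation above happens precisely when $\langle\p_2,\nu_1\rangle\le 0$ or $\langle\p_2,\nu_1\rangle=h_1$. This disposes of the case $0<\langle\p_2,\nu_1\rangle<h_1$ immediately.

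The main obstacle is the remaining boundary case $\langle\p_2,\nu_1\rangle=h_1$, which can only arise when $\K_{\oo}$ fails to be strictly convex and $\p_1,\p_2$ lie on a common flat facet of $\bd\K_{\oo}$; here $T_{\p_1}$ does avoid $\inter\K_2$, so the obstruction must come from the pair $(\K_1,\K_2)$. In this case smoothness forces $\nu_2=\nu_1$ (the unique support hyperplane at $\p_2$ is $T_{\p_1}$), whence $\K_1\subseteq\{\langle\cdot,\nu_2\rangle\ge h_2\}$ sits on the opposite side of $T_{\p_2}$ from $\K_{\oo}$. I then rule out any admissible separator $\ell$ of $\K_1$ and $\K_2$: since $\ell$ misses $\inter\K_{\oo}$, the body $\K_{\oo}$ lies on one closed side of $\ell$; if it shares a side with $\K_1$ then $\ell$ separates $\K_{\oo}$ from $\K_2$, forcing $\ell=T_{\p_2}$ through the contact point $\p_2$ by smoothness, which contradicts that $\K_1$ and $\K_{\oo}$ lie on opposite sides of $T_{\p_2}$; the symmetric subcase is identical. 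Hence no admissible separator for $\K_1,\K_2$ exists and the packing is not totally separable, completing the contrapositive. The only genuinely delicate point is this non-strictly-convex facet case; for strictly convex $\K_{\oo}$ the condition $\langle\p_2,\nu_1\rangle=h_1$ forces $\p_2=\p_1$ and the argument collapses to the easy case.
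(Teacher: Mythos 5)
Your proof is correct. It matches the paper's argument in outline---the same reformulation $\p_2\in C(\p_1)\iff\langle\p_2,\nu_1\rangle>0$, the same separators $T_{\p_1},T_{\p_2}$ in the sufficiency direction, and the same pivotal fact that smoothness plus the contact at $\p_1$ makes $T_{\p_1}$ the only hyperplane separating $\K_{\oo}$ from $\K_1$, so that total separability forces $T_{\p_1}\cap\inter\K_2=\varnothing$---but you settle the crux, the boundary case $\langle\p_2,\nu_1\rangle=h_1$ (the paper's case $\K_2\subseteq T_1^{+}$, where $[\p_1,\p_2]$ is a flat piece of $\bd\K_{\oo}$ and $T_{\p_1}=T_{\p_2}=:T$ by smoothness), by a genuinely different route. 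The paper argues geometrically: using $\oo$-symmetry it inscribes the parallelogram $\mathbf{P}=\conv\{\pm\p_1,\pm\p_2\}$ in $\K_{\oo}$, places $2\p_i+\mathbf{P}$ inside $\K_i$, and runs a planar separation argument in ${\rm span}\{\p_1,\p_2\}$ to show that any separator $L\ne T$ of $\K_1,\K_2$ would have to pass through $\p_1$ or $\p_2$ and hence support $\K_{\oo}$ there, contradicting smoothness. You instead reuse your uniqueness observation a second time: an admissible separator $\ell$ of the pair $(\K_1,\K_2)$ leaves $\K_{\oo}$ on one closed side, hence separates $\K_{\oo}$ from $\K_1$ or from $\K_2$, hence equals $T_{\p_1}$ or $T_{\p_2}$, i.e.\ $\ell=T$; but $\K_1$ and $\K_2$ both lie on the same side of $T$, so $T$ separates neither. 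This is shorter, avoids both the parallelogram construction and the reduction to two dimensions, and your support-value bookkeeping makes the trichotomy $\langle\p_2,\nu_1\rangle\le 0$, $0<\langle\p_2,\nu_1\rangle<h_1$, $\langle\p_2,\nu_1\rangle=h_1$ transparent. One small fix: the sub-case $\p_1=\p_2$ also produces $\langle\p_2,\nu_1\rangle=h_1$ (for strictly convex bodies it is the \emph{only} way this case occurs), so rather than saying it ``collapses to the easy case,'' state the one-line reason: then $\K_1=\K_2$, and no hyperplane can separate a full-dimensional body from itself. The paper, too, disposes of $\p_1=\p_2$ as a separate sub-case.
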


\begin{proof}
Let $T_{1}$ be the unique hyperplane in $\E^d$ supporting $\K_{\oo}$ (and $\K_1$) at the point $\p_{1}$ and let $T'_{1}$ be the hyperplane parallel to $T_{1}$ passing through $\oo$. Note that $T_{1}$ is also the unique hyperplane in $\E^d$ separating $\K_{\oo}$ and $\K_1$. Similarly, let $T_{2}$ be the unique hyperplane supporting $\K_{\oo}$ (and $\K_2$) at $\p_2$ and $T'_{2}$ be the parallel hyperplane passing through $\oo$. For any hyperplane $T$ of $\E^d$, we write $T^+$ and $T^-$ to denote the two closed half-spaces of $\E^d$ bounded by $T$. Let $\K_{i}\subseteq T^+_{i}$, $i=1, 2$.  

($\Leftarrow$) If $\p_{2}\notin C(\p_{1})$, then it is easy to observe that $\K_2 \subseteq T^-_1$. Thus $T_{1}$ separates $\K_1$ from both $\K_{2}$ and $\K_{\oo}$. Similarly, if $\p_{1}\notin C(\p_{2})$, then $\K_1 \subseteq T^-_2$ and $T_{2}$ separates $\K_2$ from both $\K_{1}$ and $\K_{\oo}$.  

($\Rightarrow$) Now suppose that $\K_{\oo}$, $\K_1$ and $\K_2$ form a totally separable packing. Let us also assume that $\K_2 \subseteq T^+_1$. By the total separability of packing $\{\K_{\oo}, \K_1 , \K_2 \}$, we must have $\p_2 \in T_1$ and so, $T_1$ does not separate $\K_1$ and $\K_2$. If $\p_1 =\p_2$, then any hyperplane $L\ne T_1$ separating $\K_{1}$ and $\K_2$ must pass through $\p_1$. Again by the total separability assumption, $L\cap \inter \K_{\oo}=\varnothing$. Therefore, $L$ is a supporting hyperplane of $\K_{\oo}$ at $\p_1$, which contradicts the smoothness of $\K_{\oo}$. Thus $\p_1 \ne \p_2$ and the (non-trivial) line segment $[\p_1 , \p_2 ]$ lies on $T_1 \cap \bd\K_{\oo}$ as shown in Figure \ref{fig:new}. By the $\oo$-symmetry of $\K_{\oo}$, $[-\p_1 , -\p_2]\in \bd\K_{\oo}$ and the parallelogram $\mathbf{P}$ with vertices $\p_1$, $\p_2$, $-\p_1$ and $-\p_2$ satisfies ${\rm relint}\mathbf{P}\subseteq \inter \K_{\oo}$. Also by the symmetry of packing $\{\K_{\oo}, \K_1 \}$ about $\p_1$ and of $\{\K_{\oo}, \K_2 \}$ about $\p_2$, we have $\mathbf{P}_1 = 2\p_1 +\mathbf{P}\subseteq \K_1$ and $\mathbf{P}_2 = 2\p_2 + \mathbf{P}\subseteq \K_2$. Let $\E^2 := {\rm span}\{\p_1 , \p_2 \}$. Therefore, for any hyperplane $L\ne T_1$ separating $\K_1$ and $\K_2$, the line $L\cap \E^2$ separates $\mathbf{P}_1$ and $\mathbf{P}_2$ in $\E^2$. Since $L$ cannot intersect ${\rm relint} \mathbf{P}$, it either passes through $\p_1$ or $\p_2$ (but not both). This implies that $L$ supports $\K_{\oo}$ at $\p_1$ or at $\p_2$, a contradiction. Thus $\K_2 \subseteq T^-_1$ and clearly, $\p_2 \notin C(\p_1 )$. Similarly, it can be shown that $\p_1 \notin C(\p_2 )$. 
\end{proof}

\begin{figure}[t]
\centering
\includegraphics[scale=.8]{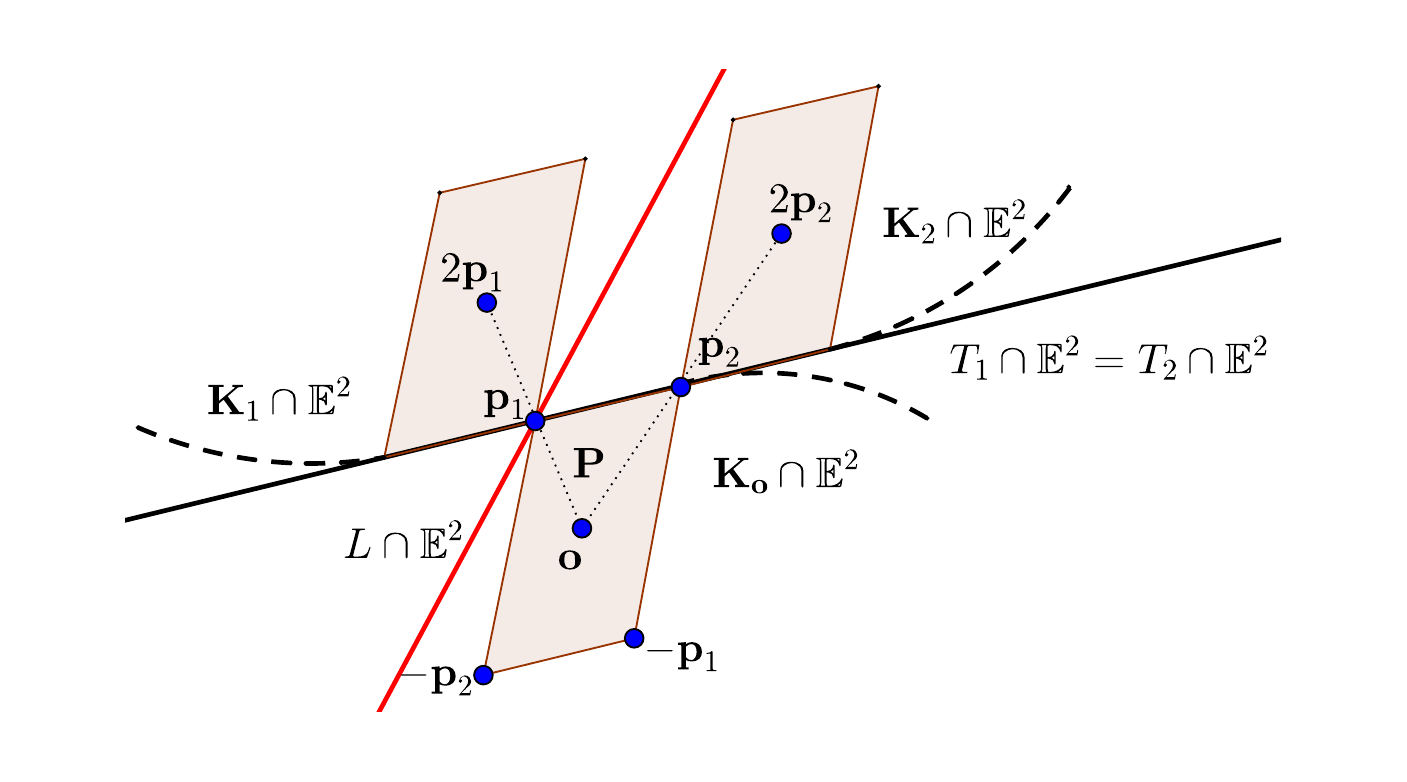}
\vspace{-10mm}
\caption{The contradiction in Lemma \ref{caps} arising from the assumption $\K_2 \subseteq T^+_1$.}
\label{fig:new}
\end{figure}

The following is an immediate consequence of Lemma \ref{caps} and the fact that in any totally separable packing involving $\K_{\oo}$, $\K_1$, $\K_2$ and possibly other translates of $\K_{\oo}$ all touching $\K_{\oo}$ as defined above, $T_{1 }$ is the unique hyperplane of $\E^d$ separating $\K_1$ from all other packing elements. 

\begin{corollary}\label{alternative}
If $\K_{\oo}$ is a smooth $\oo$-symmetric convex body in $\E^d$, then $H_{\rm sep}(\K_{\oo})$ equals the maximum cardinality of a separable point set on $\bd \K_{\oo}$. 
\end{corollary}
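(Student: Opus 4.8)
The plan is to set up a bijection between the translates of $\K_{\oo}$ touching $\K_{\oo}$ and the points of $\bd\K_{\oo}$, and then to show that total separability of such a configuration is controlled entirely by the pairwise cap condition, so that Lemma \ref{caps} can be bootstrapped from triples to arbitrary families. First I would record the elementary fact that a translate $\x+\K_{\oo}$ touches $\K_{\oo}$ if and only if $\x \in \bd(\K_{\oo}+\K_{\oo}) = \bd(2\K_{\oo})$, i.e. $\x = 2\p$ for a unique $\p \in \bd\K_{\oo}$, and that the contact point (unique by smoothness) is then $\p$. Consequently every family of translates of $\K_{\oo}$ each touching $\K_{\oo}$ has the form $\{2\p_i + \K_{\oo}\}_{i=1}^m$ with the $\p_i \in \bd\K_{\oo}$ distinct; writing $\K_i = 2\p_i + \K_{\oo}$, the configuration under consideration is $\{\K_{\oo}, \K_1, \dots, \K_m\}$, and $H_{\rm sep}(\K_{\oo})$ is the largest $m$ for which some such configuration is totally separable. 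It therefore suffices to prove that $\{\K_{\oo}, \K_1, \dots, \K_m\}$ is totally separable if and only if $\{\p_1, \dots, \p_m\}$ is a separable point set.

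For the forward implication I would use that total separability passes to subfamilies: restricting a totally separable packing to the triple $\{\K_{\oo}, \K_i, \K_j\}$ leaves it totally separable (the same separating hyperplanes remain disjoint from the smaller collection of interiors), so Lemma \ref{caps} gives $\p_i \notin C(\p_j)$ and $\p_j \notin C(\p_i)$ for every pair, which is exactly the separable point set condition. The reverse implication is where the remark preceding the statement does the real work, and this is the step I expect to be the crux. Assuming $\{\p_i\}$ is separable, I would, for each $i$, take $T_i$ to be the supporting hyperplane of $\K_{\oo}$ at $\p_i$ and run the ($\Leftarrow$) argument of Lemma \ref{caps} simultaneously against every other element: since $\p_k \notin C(\p_i)$ for all $k \neq i$, that argument yields $\K_k \subseteq T_i^-$ while $\K_i \subseteq T_i^+$, and $\K_{\oo} \subseteq T_i^-$ as well. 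Because $T_i$ supports both $\K_{\oo}$ and $\K_i$ at $\p_i$ and confines every remaining body to a closed half-space, $T_i$ meets no interior of any packing element; hence $T_i$ is a single hyperplane separating $\K_i$ from all other bodies and disjoint from all interiors.

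It remains to check that this supplies a valid separator for every pair. For a pair $\{\K_i, \K_{\oo}\}$ the hyperplane $T_i$ separates them and avoids all interiors; for a pair $\{\K_i, \K_j\}$ the same $T_i$ separates $\K_i$ from $\K_j$ and again avoids all interiors. Thus every pair is separated by a hyperplane disjoint from the interiors of all packing elements, so the whole configuration is totally separable. Combining the two implications, the maximal $m$ equals the maximal cardinality of a separable point set on $\bd\K_{\oo}$, which is precisely $H_{\rm sep}(\K_{\oo})$.

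The only genuine subtlety is the reverse direction: one must verify that the locally chosen separators $T_i$ are globally admissible, i.e. disjoint from the interiors of \emph{all} packing elements and not merely those of the separated pair. This is exactly what the special ``all translates touch $\K_{\oo}$'' geometry guarantees, via the fact noted before the statement that $T_i$ is the unique hyperplane separating $\K_i$ from the rest. In a general totally separable packing the existence of pairwise separators is a far weaker condition; here the shared contact with the central body $\K_{\oo}$ forces each $T_i$ to serve as a one-against-all separator, which is what collapses the global condition to the pairwise cap condition of the separable point set.
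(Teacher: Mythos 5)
Your proof is correct and takes essentially the same route as the paper: the forward direction restricts the totally separable packing to triples and applies Lemma \ref{caps}, while the reverse direction rests on exactly the fact the paper cites---that the supporting hyperplane $T_i$ of $\K_{\oo}$ at $\p_i$ separates $\K_i = 2\p_i + \K_{\oo}$ from $\K_{\oo}$ and from every other translate while meeting no interior, so it serves as a one-against-all separator. The paper presents this as an immediate consequence; your write-up simply makes those two steps explicit.
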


\begin{definition}
Let $\K_{\oo}$ be an $\oo$-symmetric convex body in $\E^d$, $d\ge 2$. A non-zero vector $\x$ in $(\R^{d} , \norm{\cdot}_{\K_{\oo}} )$ is said to be {\em Birkhoff orthogonal} to a non-zero vector $\y$ if $\norm{\x}_{\K_{\oo}}\leq \norm{\x+t\y}_{\K_{\oo}}$, for all $t\in \mathbb{R}$ \cite{Bir}. We denote this by $\x\dashv_{\K_{\oo}}\y$. 
\end{definition}

Note that in general, Birkhoff orthogonality is a  non-symmetric relation, that is $\x\dashv_{\K_{\oo}}\y$ does not imply $\y\dashv_{\K_{\oo}}\x$. 

\begin{definition}
Let $\K_{\oo}$ be an $\oo$-symmetric convex body in $\E^d$, $d\ge 2$, and $\{\e_{i}:i=1,\ldots, d\}$ be a basis for $\R^d$. We call this an {\em Auerbach basis} of $(\R^{d} , \norm{\cdot}_{\K_{\oo}} )$ provided that for every $i$, $\norm{\e_{i}}_{\K_{\oo}}=1$ and $\e_{i}$ is Birkhoff orthogonal to every element of the linear subspace $\mathrm{span}\{\e_{j}:j\neq i, j=1,\ldots, d\}$ (see \cite{Pl}). 
\end{definition}

Plichko \cite{Pl} showed that for every $\oo$-symmetric $d$-dimensional convex body $\K_{\oo}$, the normed linear space $(\R^{d} , \norm{\cdot}_{\K_{\oo}} )$ possesses at least two Auerbach bases -- one corresponding to the centers of the facets of the affine $d$-cube of minimum volume circumscribing $\K_{\oo}$, while the other corresponding to the vertices of the affine $d$-cross polytope of maximum volume inscribed in $\K_{\oo}$. Moreover, if these two bases coincide, then $(\R^{d} , \norm{\cdot}_{\K_{\oo}} )$ possesses infinitely many Auerbach bases.

\begin{remark}\label{rem}
Suppose $\K_{\oo}$ is a smooth $\oo$-symmetric convex body in $\E^d$ and $\x, \y\in \bd\K_{\oo}$ with $\x\dashv_{\K_{\oo}}\y$. If $T$ is the hyperplane supporting $\K_{\oo}$ at $\x$ and $T'$ is the hyperplane passing through $\oo$ and parallel to $T$, then clearly $\y\in T'$. Conversely, if $\x, \y\in \bd\K_{\oo}$ and $\y\in T'$, then $\x\dashv_{\K_{\oo}}\y$. It follows that $\x\dashv_{\K_{\oo}}\y$ if and only if $\y\in \partial C(\x)$. 
\end{remark}

\begin{lemma}\label{lower}
If $\K$ is a convex body in $\E^d$, $d\ge 2$, then 
\begin{equation}\label{eq:lower}
H_{\rm sep}(\K)\geq 2d.
\end{equation}
\end{lemma}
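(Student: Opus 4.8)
The plan is to reduce to the $\oo$-symmetric case and then exhibit an explicit configuration of $2d$ touching translates coming from an Auerbach basis. First, by Lemma \ref{minkowski} a set of translates of $\K$ is totally separable if and only if its Minkowski symmetrization is, and since touching is preserved under symmetrization, the translates touching $\K$ correspond bijectively to those touching $\K_{\oo}$; hence $H_{\rm sep}(\K)=H_{\rm sep}(\K_{\oo})$ and it suffices to prove \eqref{eq:lower} when $\K=\K_{\oo}$ is $\oo$-symmetric. I would then invoke Plichko's theorem to fix an Auerbach basis $\{\e_1,\ldots,\e_d\}$ of $(\R^d,\norm{\cdot}_{\K})$, so that $\norm{\e_i}_{\K}=1$ and $\e_i\dashv_{\K}\vv$ for every $\vv$ in $H_i:=\mathrm{span}\{\e_j:j\ne i\}$. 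Note this needs only $\oo$-symmetry, not smoothness.

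The key geometric step — and the one I expect to require the most care — is to convert the Birkhoff orthogonality relation into a statement about supporting hyperplanes. Writing $\x=\sum_k x_k\e_k$ in the coordinates of the Auerbach basis, I claim the coordinate hyperplane $\{x_i=1\}$ supports $\K$ at $\e_i$. Indeed, $\e_i\dashv_{\K}\vv$ for all $\vv\in H_i$ means $1=\norm{\e_i}_{\K}\le\norm{\e_i+\vv}_{\K}$ for every $\vv\in H_i$, i.e.\ every point of the affine hyperplane $\e_i+H_i=\{x_i=1\}$ has $\K$-norm at least $1$ and so lies outside $\inter\K$; as $\e_i$ lies on this hyperplane and on $\bd\K$, it is a supporting hyperplane. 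Since $\oo\in\inter\K$ satisfies $x_i=0$, this forces $\K\subseteq\{x_i\le 1\}$, and by $\oo$-symmetry $\K\subseteq\{x_i\ge -1\}$ with $\{x_i=-1\}$ supporting $\K$ at $-\e_i$. Running over all $i$, I obtain that $\K$ is inscribed in the box $\prod_i[-1,1]$ touching the center of each facet, with $\inter\K\subseteq\{-1<x_i<1\}$ for every $i$.

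Finally, I would take the $2d$ translates $P_i^{\pm}:=\pm 2\e_i+\K$. Each touches $\K$, since the centers $\oo$ and $\pm 2\e_i$ are at $\norm{\cdot}_{\K}$-distance $2$, so the two unit balls meet at $\pm\e_i$. It then remains to verify total separability of $\{\K\}\cup\{P_i^{\pm}\}$, which reduces to bookkeeping with the hyperplanes $\{x_i=\pm 1\}$. Because translating by $\pm 2\e_i$ shifts only the $i$-th coordinate, $P_i^{+}\subseteq\{x_i\ge 1\}$ while every other element of the family lies in $\{x_i\le 1\}$; moreover $\{x_i=1\}$ misses all open bodies, since $\inter\K\subseteq\{x_i<1\}$, $\inter P_i^{+}\subseteq\{x_i>1\}$, and all remaining interiors lie in $\{-1<x_i<1\}$. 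Thus $\{x_i=1\}$ separates $P_i^{+}$ from every other element while avoiding all interiors, and symmetrically $\{x_i=-1\}$ handles $P_i^{-}$; for an arbitrary pair, the hyperplane attached to one of its two members separates that pair. This exhibits $\K$ together with $2d$ translates touching it forming a totally separable packing, giving $H_{\rm sep}(\K)\ge 2d$. The only genuine subtlety to nail down is the half-open/boundary accounting in the last two sentences, ensuring each chosen separating hyperplane is disjoint from \emph{all} interiors simultaneously.
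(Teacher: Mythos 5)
Your proposal is correct, and its skeleton is exactly the paper's: reduce to the $\oo$-symmetric case via Lemma \ref{minkowski}, then use Plichko's theorem to produce an Auerbach basis $\{\e_1,\ldots,\e_d\}$ whose points $\pm\e_i$ give the $2d$ touching translates. Where you diverge is in how total separability is certified. The paper simply asserts that $S=\{\pm\e_i\}$ is a separable point set on $\bd\K_{\oo}$ and concludes via its cap machinery (Lemma \ref{caps} and Corollary \ref{alternative}), whereas you unwind Birkhoff orthogonality into the explicit supporting hyperplanes $\{x_i=\pm1\}$ and check by hand that each such hyperplane separates $P_i^{\pm}=\pm2\e_i+\K$ from every other packing element while meeting no interior. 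Your longer route buys something concrete: the paper's notions of caps and separable point sets, and hence Corollary \ref{alternative}, are formally defined only for \emph{smooth} $\oo$-symmetric bodies, while Lemma \ref{lower} is stated for an arbitrary convex body; your direct verification never needs smoothness, only the existence of a supporting hyperplane at $\e_i$ parallel to $\mathrm{span}\{\e_j : j\neq i\}$, which is precisely what Birkhoff orthogonality supplies even when supporting hyperplanes are non-unique. In that sense your write-up is the more self-contained justification of the lemma exactly as stated, and the ``boundary accounting'' you flag at the end is indeed sound: all interiors lie in the open slabs $\{-1<x_i<1\}$ or their translates by $\pm2\e_i$, so each hyperplane $\{x_i=\pm1\}$ avoids every interior simultaneously.
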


\begin{proof}
Indeed by Lemma \ref{minkowski}, $H_{\rm sep}(\K) = H_{\rm sep}(\K_{\oo})$. Furthermore, if $\{\e_{i}:i=1,\ldots, d\}$ is an Auerbach basis of $(\R^{d} , \norm{\cdot}_{\K_{\oo}} )$, then $S=\{\pm\e_{i}:i=1,\ldots, d\}$ is a separable point set on $\bd \K_{\oo}$. Thus using \cite{Pl} we obtain that $H_{\rm sep}(\mathbf{K}_{\oo}) \geq \left|S\right| = 2d$. 
\end{proof}

We now prove that for $d=2$, the lower bound given in Lemma \ref{lower} becomes exact. 

\begin{figure}[t]
\centering
\includegraphics[scale=.8]{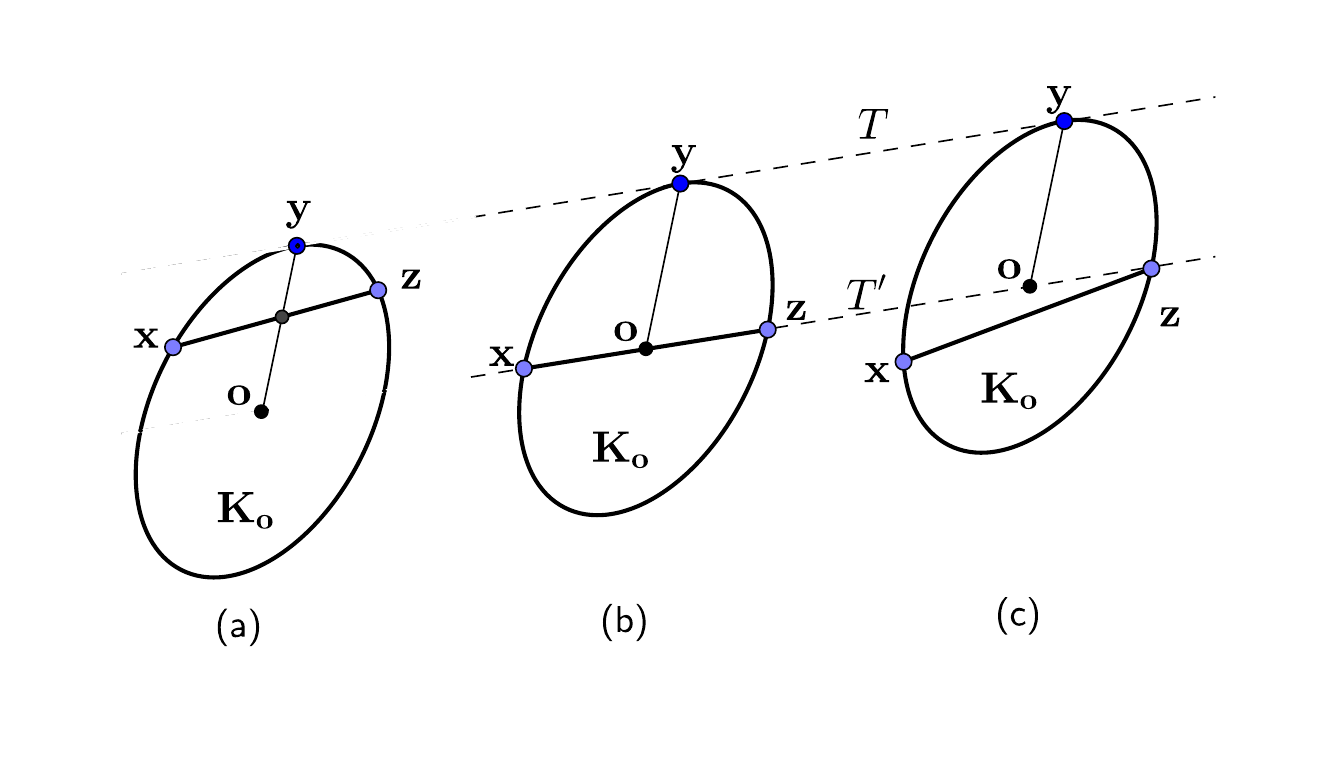}

\vspace{-12mm}

\caption{In Lemma \ref{hull}, (a) three points in a separable point set on $\bd\K_{\oo}$ and the situation with (b) $\x, \z\in\partial C(\y)$ or (c) $\x\notin\partial C(\y)$.}
\label{fig:hull}
\end{figure}

\begin{lemma}\label{hull}
Let $\K_{\oo}$ be a smooth $\oo$-symmetric convex domain in $\E^2$ and $S$ be any separable point set consisting of $3$ or more points on $\bd \K_{\oo}$. Then $\oo \in \conv(S)$, where $\conv(S)$ stands for the convex hull of $S$. 
\end{lemma}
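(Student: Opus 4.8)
The plan is to argue by contradiction, reducing the separability hypothesis to a statement about a single line through the origin. Suppose $\oo\notin\conv(S)$. Since $\conv(S)$ is compact and misses $\oo$, there is a line $\ell$ through $\oo$ with all of $S$ in one open half-plane $H$ bounded by $\ell$. The line $\ell$ meets $\bd\K_{\oo}$ in an antipodal pair $\mathbf{w},-\mathbf{w}$, and $S$ therefore lies on the open boundary arc $\widehat{G}=\bd\K_{\oo}\cap H$ joining them. As $\widehat{G}$ is an arc, the points of $S$ acquire a linear order along it; since $|S|\ge 3$, I can choose three of them, $\aaa,\bb,\cc\in S$, with $\bb$ strictly between $\aaa$ and $\cc$. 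The closed subarc $A\subseteq\widehat{G}$ from $\aaa$ to $\cc$ then contains $\bb$ and lies entirely in $H$.

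The key step is to read off what separability says about the middle point $\bb$. Let $T_{\bb}$ be the supporting line of $\K_{\oo}$ at $\bb$ and $T'_{\bb}$ the parallel line through $\oo$, so that $C(\bb)$ is the open arc cut off on the $\bb$-side of $T'_{\bb}$ and $\partial C(\bb)=\bd\K_{\oo}\cap T'_{\bb}$. Fix a linear functional $\phi$ vanishing exactly on $T'_{\bb}$ and positive on the side containing $\bb$. Because $\oo\in\inter\K_{\oo}$, the line $T'_{\bb}$ runs through an interior point, hence meets $\bd\K_{\oo}$ in exactly two points, which by $\oo$-symmetry form an antipodal pair $\q,-\q$; these are precisely the zeros of $\phi$ on $\bd\K_{\oo}$. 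Now separability of $S$ forces $\aaa\notin C(\bb)$ and $\cc\notin C(\bb)$, that is $\phi(\aaa)\le 0$ and $\phi(\cc)\le 0$, whereas $\bb\in C(\bb)$ gives $\phi(\bb)>0$. Applying the intermediate value theorem to $\phi$ along $A$ — once on the subarc from $\aaa$ to $\bb$ and once on the subarc from $\bb$ to $\cc$ — yields two distinct zeros of $\phi$ on $A$. Since the only zeros of $\phi$ on $\bd\K_{\oo}$ are $\q$ and $-\q$, both $\q$ and $-\q$ must lie on $A$.

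This is the desired contradiction: $A\subseteq H$ forces $\q,-\q\in H$, but the open half-plane $H$ is bounded by the line $\ell$ through $\oo$ and cannot contain an antipodal pair, since $\q\in H$ puts $-\q$ into the opposite open half-plane. Hence $\oo\in\conv(S)$. I expect the effort to lie not in one hard step but in the bookkeeping that makes the intermediate value argument airtight while $\K_{\oo}$ is merely smooth (not strictly convex): that $T'_{\bb}$ meets $\bd\K_{\oo}$ in exactly two points even when $\bb$ sits on a segment of the boundary — guaranteed because $T'_{\bb}$ passes through the interior point $\oo$ and so cannot contain a boundary segment — and that the closed subarc $A$ between the two extreme chosen points stays inside $H$, so that both intersection points are genuinely trapped there. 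The conceptual heart is the single observation that the separability constraints on the two outer points, read through the one line $T'_{\bb}$, force $\partial C(\bb)$ into one open half-plane through $\oo$, which is impossible.
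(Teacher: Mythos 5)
Your proof is correct, and while it shares the paper's central idea --- examine the cap of a ``middle'' point $\bb$ and use separability to push the two outer points onto the non-positive side of the line $T'_{\bb}$ through $\oo$ --- the scaffolding around that idea is genuinely different. The paper orders the three points by orienting $\bd\K_{\oo}$ so that $[\x,\z]$ crosses $[\oo,\y]$ (a claim justified essentially by a figure), and then closes with a case analysis: either both outer points lie on $\partial C(\y)$, forcing $\oo\in[\x,\z]$, or at least one lies strictly beyond $T'$, making $[\x,\z]$ disjoint from $[\oo,\y]$; either way the crossing is contradicted. You instead strictly separate $\oo$ from the compact convex set $\conv(S)$ and slide the separating line to pass through $\oo$, trapping $S$ in an open half-plane $H$ and hence on an open arc of $\bd\K_{\oo}$ between the antipodal pair $\pm\mathbf{w}$; the arc ordering supplies the middle point, and the intermediate value theorem applied to the functional vanishing on $T'_{\bb}$ produces two distinct zeros on the arc, which must be the antipodal pair $\partial C(\bb)=\{\q,-\q\}$ --- impossible inside an open half-plane bounded by a line through $\oo$. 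What your route buys: the existence of the middle point and the final contradiction are fully rigorous with no appeal to a picture, and the argument makes explicit that only smoothness (not strict convexity) is needed, since $T'_{\bb}$ passes through the interior point $\oo$ and therefore meets $\bd\K_{\oo}$ in exactly two points. What the paper's route buys is brevity: once the crossing claim is granted, the contradiction falls out in two lines.
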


\begin{proof}
Suppose to the contrary that this is not the case. Then there exists a separable point set $S$ on $\bd \K_{\oo}$ with $\x , \y , \z \in S$ and $\oo \notin \conv\{\x , \y , \z \}$. Orienting the boundary of $\K_{\oo}$ in an arbitrary direction, say counterclockwise, gives an ordering of these three points, say $\x , \y , \z$, such that the line segment $[\x, \z]$ intersects the line segment $[\oo, \y]$ as shown in Figure \ref{fig:hull}. (Note that the point of intersection could be any point on the half-open line segment $(\oo,\y]$.) Let $T$ be the unique line supporting $\K_{\oo}$ at $\y$ and $T'$ the line parallel to $T$ passing through $\oo$. By assumption, $\x, \z \notin C(\y)$. If $\x, \z\in \partial C(\y)$, then $\oo\in \conv\{\x, \z\}$, a contradiction. If either $\x\notin \partial C(\y)$ or $\z\notin \partial C(\y)$, then $[\x, \z]$ is disjoint from $[\oo,\y]$, again a contradiction. 
\end{proof}

We can now compute the separable Hadwiger number of smooth $\oo$-symmetric convex domains exactly.  

\begin{figure}[t]
\centering
\includegraphics[scale=0.4]{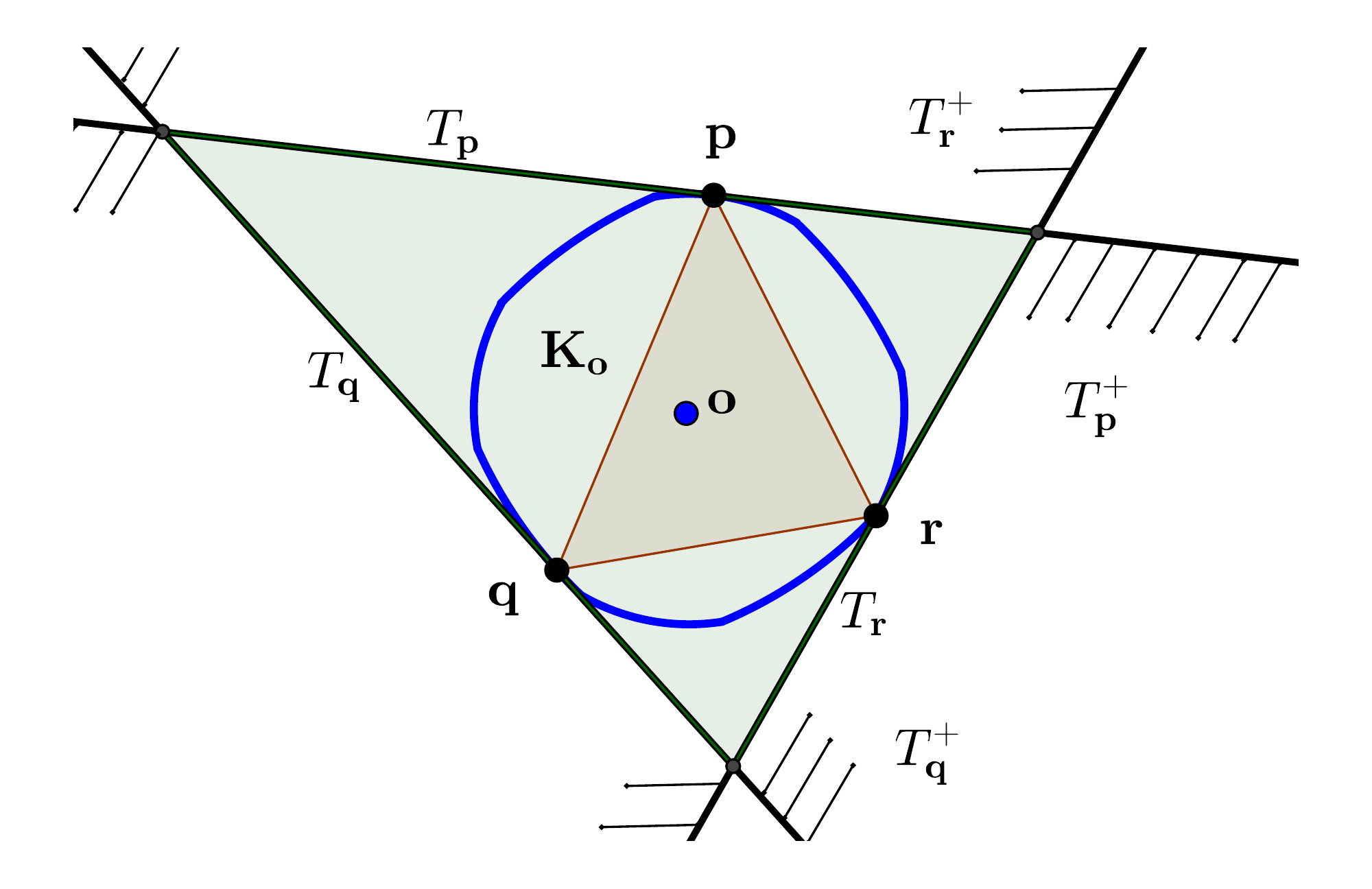}
\vspace{-5mm}
\caption{The case $k=2$ of Theorem \ref{2Dmain} when no two of the vertices of $\Delta^2$ have parallel tangent lines supporting $\K_{\oo}$.}
\label{fig:triangle}
\end{figure}

\begin{theorem}\label{2Dmain}
If $\K_{\oo}$ is any smooth $\oo$-symmetric convex domain in $\E^2$, then 
\begin{equation}\label{eq:main}
H_{\rm sep}(\K_{\oo})=4. 
\end{equation}
Moreover, a separable point set $S$ on $\bd \K_{\oo}$ has maximal cardinality if and only if elements of $S$, when considered as vectors, form an Auerbach basis of $(\mathbb{R}^2, \norm{\cdot}_{\K_{\oo}})$. 
\end{theorem}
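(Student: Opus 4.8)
The plan is to establish the two claims of Theorem \ref{2Dmain} in sequence, using the machinery built up in the excerpt. For the equality $H_{\rm sep}(\K_{\oo})=4$, the lower bound $H_{\rm sep}(\K_{\oo})\ge 4$ is already supplied by Lemma \ref{lower} applied with $d=2$, so the real content is the upper bound $H_{\rm sep}(\K_{\oo})\le 4$. By Corollary \ref{alternative}, it suffices to show that any separable point set $S$ on $\bd\K_{\oo}$ has at most $4$ points. First I would suppose for contradiction that $S$ contains $5$ points, or more efficiently, work directly toward showing that a separable point set with more than $4$ elements cannot exist. The key geometric engine is Lemma \ref{hull}: any separable point set with $3$ or more points has $\oo$ in its convex hull, which pins down the cyclic arrangement of the points around $\bd\K_{\oo}$.

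The main argument I would run is the following. Orient $\bd\K_{\oo}$ counterclockwise and list the points of $S$ in cyclic order as $\x_1,\ldots,\x_k$ with $k=|S|$. For each pair of consecutive points $\x_i,\x_{i+1}$ (indices mod $k$), the separability condition forces $\x_{i+1}\notin C(\x_i)$ and $\x_i\notin C(\x_{i+1})$, which by Remark \ref{rem} and Definition \ref{def:caps} means each lies in the closed complementary region determined by the other's cap. Combined with Lemma \ref{hull} ($\oo\in\conv S$), I would track the total angular ``spread'' of the points around $\oo$. Each cap $C(\p)$ is an open half of $\bd\K_{\oo}$ (the part strictly on one side of the hyperplane $T'_\p$ through $\oo$), so the separability of $\x_i$ and $\x_{i+1}$ means the arc from $\x_i$ to $\x_{i+1}$, together with the requirement that neither enters the other's open half, constrains consecutive points to ``span at least a quarter'' in the appropriate sense. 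Summing such constraints around the full boundary forces $k\le 4$: five separable points would require more than a full revolution's worth of separation. The cleanest way to make this rigorous is to pass to $\x_i\dashv_{\K_{\oo}}$-type boundary relations and use that $\partial C(\x_i)=\{\pm\y_i\}$ consists of the two antipodal points where $T'_{\x_i}$ meets $\bd\K_{\oo}$; the open cap is then one of the two arcs cut off by these, and separability says $\x_{i+1}$ must lie in the closed arc ``beyond'' $\pm\y_i$.

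For the ``moreover'' part, I would prove both directions of the characterization of maximal separable point sets. For the easy direction, if the four points of $S=\{\e_1,\e_2,\e_3,\e_4\}$ form an Auerbach basis in the sense of a symmetric basis $\{\pm\e_1,\pm\e_2\}$, then Lemma \ref{lower}'s construction already shows such an $S$ is separable; alternatively, by Remark \ref{rem}, $\e_1\dashv_{\K_{\oo}}\e_2$ together with $\oo$-symmetry places the four points exactly at the cap-boundary configuration, so each pair satisfies $\x\notin C(\y)$. For the converse, I would start from a separable set $S$ of maximal cardinality $4$, write its cyclic points as $\x_1,\x_2,\x_3,\x_4$, and invoke Lemma \ref{hull} (now $\oo\in\conv S$) to force the arrangement. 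The equality case of the counting argument from the first half must be tight, which forces each consecutive pair to satisfy the boundary relation $\x_{i+1}\in\partial C(\x_i)$, i.e.\ $\x_i\dashv_{\K_{\oo}}\x_{i+1}$ by Remark \ref{rem}. Tightness all the way around then forces $\x_3=-\x_1$ and $\x_4=-\x_2$ by $\oo$-symmetry, and the orthogonality relations $\x_1\dashv_{\K_{\oo}}\x_2$ are exactly the Auerbach conditions (each basis vector Birkhoff orthogonal to the span of the other, which in dimension $2$ is a single line).

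The step I expect to be the main obstacle is making the ``angular spread'' counting argument fully rigorous without an a priori angle measure, since in a general normed plane there is no canonical rotation and Birkhoff orthogonality is non-symmetric. The cleanest fix is to avoid quantitative angles entirely and instead argue combinatorially with the caps: I would show that for any three cyclically consecutive points $\x_{i-1},\x_i,\x_{i+1}$ of a separable set, the point $\x_i$ together with its antipode $-\x_i$ splits $\bd\K_{\oo}$ into two arcs, and separability forces $\x_{i-1}$ and $\x_{i+1}$ to lie in opposite arcs; chaining this around a putative $5$-point set and using Lemma \ref{hull} to control which side of each cap the points fall on then yields the contradiction. The delicate bookkeeping is ensuring the non-symmetry of $\dashv_{\K_{\oo}}$ is handled correctly when passing between $\x_{i+1}\notin C(\x_i)$ and $\x_i\notin C(\x_{i+1})$, since these give information about two different supporting lines.
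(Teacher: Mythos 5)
Your overall architecture (lower bound from Lemma \ref{lower}, reduction to separable point sets via Corollary \ref{alternative}, and the ``moreover'' part via Remark \ref{rem} plus tightness) is sound and the ``moreover'' direction would go through once the cardinality bound is in place. The genuine gap is in the heart of the proof: the upper bound $|S|\le 4$. You correctly distrust your quantitative ``each consecutive pair spans a quarter'' argument, but the combinatorial fix you propose in its place does not work. Your claim is that for cyclically consecutive $\x_{i-1},\x_i,\x_{i+1}$ the points $\x_{i-1}$ and $\x_{i+1}$ lie in opposite arcs determined by $\x_i$ and $-\x_i$. Normalize boundary points to their Euclidean directions: since each ray from $\oo$ meets $\bd\K_{\oo}$ once and $-\x$ has direction opposite to $\x$, your claim (taken over all $i$) says exactly that every consecutive angular gap between the directions of points of $S$ is at most $\pi$. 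But the conclusion of Lemma \ref{hull} is \emph{also} equivalent to ``no gap exceeds $\pi$'' ($\oo\in\conv(S)$ iff the directions lie in no open half-plane). Five equally spaced directions satisfy both conditions, so chaining your claim around a putative $5$-point set, even together with Lemma \ref{hull}, yields no contradiction whatsoever.

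The fix shows your original angular instinct was right and the worry about normed angle measures misplaced: you never need a norm-adapted measure because every relevant separating line passes through $\oo$. Separability says $\x_j\notin C(\x_i)$ for all $j\ne i$, i.e.\ every other point of $S$ lies in the closed Euclidean half-plane bounded by $T'_{\x_i}$ on the side away from $\x_i$, where $T'_{\x_i}$ contains $\oo$. Hence for each $i$ the \emph{open} half-plane bounded by $T'_{\x_i}$ containing $\x_i$ meets $S$ only in $\x_i$, so that open half-plane of directions is contained in the union of the two gaps adjacent to the direction of $\x_i$, giving $g_{i-1}+g_i\ge\pi$ for every $i$. (Note the correct statement is about \emph{sums} of adjacent gaps; individual gaps need not be $\ge\pi/2$ -- consider an eccentric ellipse -- which is precisely why the ``quarter per pair'' phrasing cannot be made literal.) Summing over $i$ gives $2\cdot 2\pi\ge |S|\,\pi$, so $|S|\le 4$; for $|S|=4$ all inequalities are tight, which forces $\x_3=-\x_1$, $\x_4=-\x_2$, and then the antipodal-pair argument puts $\x_2$ on $T'_{\x_1}$ and $\x_1$ on $T'_{\x_2}$, i.e.\ mutual Birkhoff orthogonality, recovering the Auerbach characterization. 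For comparison, the paper avoids all angular counting: it applies Carath\'eodory's theorem to $\oo\in\conv(S)$ and splits into the case where two points of $S$ have parallel supporting lines (then two caps leave only an antipodal pair uncovered, so $|S|\le 4$) and the case of a triangle of pairwise non-parallel tangents (then three caps cover all of $\bd\K_{\oo}$, forcing $|S|=3$, contradicting the lower bound).
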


\begin{proof}
First, we prove \eqref{eq:main} using Corollary \ref{alternative}. Suppose $S$ is a maximal cardinality separable point set on $\bd \K_{\oo}$. By Lemma \ref{hull}, $\oo\in \conv(S)$ and by the Carath\'{e}odory theorem, there exists a simplex $\Delta^k$ of minimum dimension $1\le k\le 2$, such that $\oo \in {\rm relint} \Delta^k$. Two cases arise. 

If $k=1$, then $\oo$ lies on a line segment joining a pair of antipodal points, say $\p$ and $-\p$, in $S$. Since $\K_{\oo}$ is smooth and $\oo$-symmetric, the unique lines tangent to $\K_{\oo}$ at $\p$ and $-\p$ are parallel. Hence $\K_{\oo}\setminus (C(\p)\cup C(-\p))$ consists of a pair of antipodal points and so, $\left| S\right| \le 4$. The result follows by Lemma \ref{lower}. 

If $k=2$, then we deal with two subcases. Let $V(\Delta^2 ) =\{\p, \q, \rr\}$ be the set of vertices of $\Delta^2$.  Suppose that the tangent lines to two of the vertices of $\Delta^2$, say $\p$ and $\q$, are parallel. Then once again $\K_{\oo}\setminus (C(\p)\cup C(\q))$ consists of a pair of antipodal points and the result follows. 

Now suppose that no two of the vertices of $\Delta^2$ are supported by parallel lines (Figure \ref{fig:triangle}). Let $T_{\p}$, $T_{\q}$ and $T_{\rr}$ be the unique lines tangent to $\K_{\oo}$ at $\p$, $\q$ and $\rr$ bounding the closed half-spaces $T_{\p}^+$, $T_{\p}^-$, $T_{\q}^+$, $T_{\q}^-$, $T_{\rr}^+$ and $T_{\rr}^-$ of $\E^d$, respectively, such that $\K_{\oo}\subseteq T_{\p}^+ \cap T_{\q}^+ \cap T_{\rr}^+$ and $(-\p+T_{\p}^+ )\cap (-\q+T_{\q}^+ )\cap (-\rr+T_{\rr}^+ )= \{\oo\}$. Thus $T_{\p}$, $T_{\q}$ and $T_{\rr}$ form a triangle circumscribing $\K_{\oo}$ and 
$$\bd \K_{\oo} = C(\p) \cup C(\q)\cup C(\rr), $$ 
showing that $\left|S\right| = \left|V(\Delta^2 )\right| = 3$. From Lemma \ref{lower}, this contradicts the maximality of $\left|S\right|$. 

The proof of \eqref{eq:main} also shows that a maximal cardinality separable point set on $\bd\K_{\oo}$ consists of a pair of antipodes $\{\p, -\p, \q, -\q\}$ such that $\p\in \partial C(\q)$, $\q\in \partial C(\p)$ and $\p$ and $\q$, when considered as vectors, are linearly independent. Thus by Remark \ref{rem}, $\p\dashv_{\K_{\oo}}\q$ and $\q\dashv_{\K_{\oo}} \p$ and therefore, $\{\p, \q\}$ is an Auerbach basis of $(\mathbb{R}^2, \norm{\cdot}_{\K_{\oo}})$. Conversely, if $\{\p, \q\}$ is an Auerbach basis of $(\mathbb{R}^2, \norm{\cdot}_{\K_{\oo}})$, then by Remark \ref{rem}, $\{\p, -\p, \q, -\q\}$ is a separable point set on $\bd\K_{\oo}$, and by \eqref{eq:main}, has maximal cardinality. This completes the proof.  
\end{proof}


We now refer to Lemma \ref{minkowski}. Despite using $\oo$-symmetry throughout most of this section, Lemma \ref{minkowski} shows that this assumption can be removed from the statement of Theorem \ref{2Dmain}.

\begin{corollary}\label{nonsym}
If $\K$ is a smooth convex domain in $\E^2$, then $H_{\rm sep}(\K)=4$. 
\end{corollary}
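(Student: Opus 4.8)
The plan is to strip the $\oo$-symmetry hypothesis from Theorem \ref{2Dmain} by passing to the Minkowski symmetrization $\K_{\oo}=\tfrac{1}{2}(\K-\K)$ and invoking the totally separable form of Minkowski's lemma (Lemma \ref{minkowski}). Concretely, I would establish two facts, namely that $H_{\rm sep}(\K)=H_{\rm sep}(\K_{\oo})$ and that $\K_{\oo}$ is again smooth, and then apply Theorem \ref{2Dmain} to conclude $H_{\rm sep}(\K)=H_{\rm sep}(\K_{\oo})=4$.

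First, the identity $H_{\rm sep}(\K)=H_{\rm sep}(\K_{\oo})$. Consider any family ${\cal P}=\{\K,\x_1+\K,\dots,\x_m+\K\}$ in which every $\x_i+\K$ touches $\K$ and which forms a totally separable packing. By Minkowski's lemma the touching and non-overlapping relations, and hence the entire contact graph, of ${\cal P}$ and ${\cal P}_{\oo}=\{\K_{\oo},\x_1+\K_{\oo},\dots,\x_m+\K_{\oo}\}$ coincide; in particular each $\x_i+\K_{\oo}$ touches $\K_{\oo}$. By Lemma \ref{minkowski}, ${\cal P}$ is totally separable if and only if ${\cal P}_{\oo}$ is. Thus admissible configurations for $\K$ correspond bijectively to admissible configurations for $\K_{\oo}$ with the same number of neighbours, and taking maxima over all such configurations yields $H_{\rm sep}(\K)=H_{\rm sep}(\K_{\oo})$.

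Second, and this is the real content, I must verify that $\K_{\oo}$ is smooth whenever $\K$ is. Since $-\K$ is smooth and homotheties preserve smoothness, it suffices to show that the Minkowski sum of a smooth convex domain $\K$ with an arbitrary convex domain $\LL$ is smooth. I would argue by contradiction using monotonicity of outer normals. If some $\p\in\bd(\K+\LL)$ carried two distinct outer unit normals $\mathbf{u}_1\ne \mathbf{u}_2$, then each $\mathbf{u}_i$ would expose a face of $\K+\LL=\K+\LL$ containing $\p$, and since the exposed face of a sum is the sum of the exposed faces we obtain decompositions $\p=\x_i+\y_i$ with $\x_i\in\bd\K$, $\y_i\in\bd\LL$, where $\mathbf{u}_i$ is an outer normal of $\K$ at $\x_i$ and of $\LL$ at $\y_i$. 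The defining inequalities of the normal cones give $\langle \x_1-\x_2,\mathbf{u}_1-\mathbf{u}_2\rangle\ge 0$ and $\langle \y_1-\y_2,\mathbf{u}_1-\mathbf{u}_2\rangle\ge 0$; adding them and using $\x_1+\y_1=\x_2+\y_2=\p$ forces both to be equalities, whence $\x_1-\x_2$ is orthogonal to both $\mathbf{u}_1$ and $\mathbf{u}_2$. Because the normal cone at a boundary point lies in a closed half-space, $\mathbf{u}_1$ and $\mathbf{u}_2$ are non-antipodal, hence linearly independent in $\R^2$, so $\x_1=\x_2$. But then the single boundary point $\x_1\in\bd\K$ would admit the two distinct outer normals $\mathbf{u}_1,\mathbf{u}_2$, contradicting the smoothness of $\K$. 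Hence $\K+\LL$, and therefore $\K_{\oo}$, is smooth.

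Finally, $\K_{\oo}$ is a smooth $\oo$-symmetric convex domain in $\E^2$, so Theorem \ref{2Dmain} gives $H_{\rm sep}(\K_{\oo})=4$, and combined with $H_{\rm sep}(\K)=H_{\rm sep}(\K_{\oo})$ this yields $H_{\rm sep}(\K)=4$. I expect the only genuine obstacle to be the smoothness-preservation step: the normal-cone monotonicity argument is the cleanest route, but one must be careful that the two decompositions of $\p$ need not involve the same point of $\K$, which is precisely why the orthogonality-plus-independence argument (rather than a naive uniqueness claim) is required.
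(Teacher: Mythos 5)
Your proposal is correct and takes essentially the same route as the paper: pass to the Minkowski symmetrization $\K_{\oo}$, use Lemma \ref{minkowski} (together with Minkowski's lemma for the contact relation) to get $H_{\rm sep}(\K)=H_{\rm sep}(\K_{\oo})$, and then apply Theorem \ref{2Dmain}. Your extra step verifying that $\K_{\oo}$ inherits smoothness from $\K$ -- via normal-cone monotonicity, the face decomposition of a Minkowski sum, and linear independence of non-antipodal unit normals in the plane -- is sound and fills in a detail the paper leaves implicit.
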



\begin{figure}[t]
\centering
\includegraphics[width=6cm, height=3.5cm]{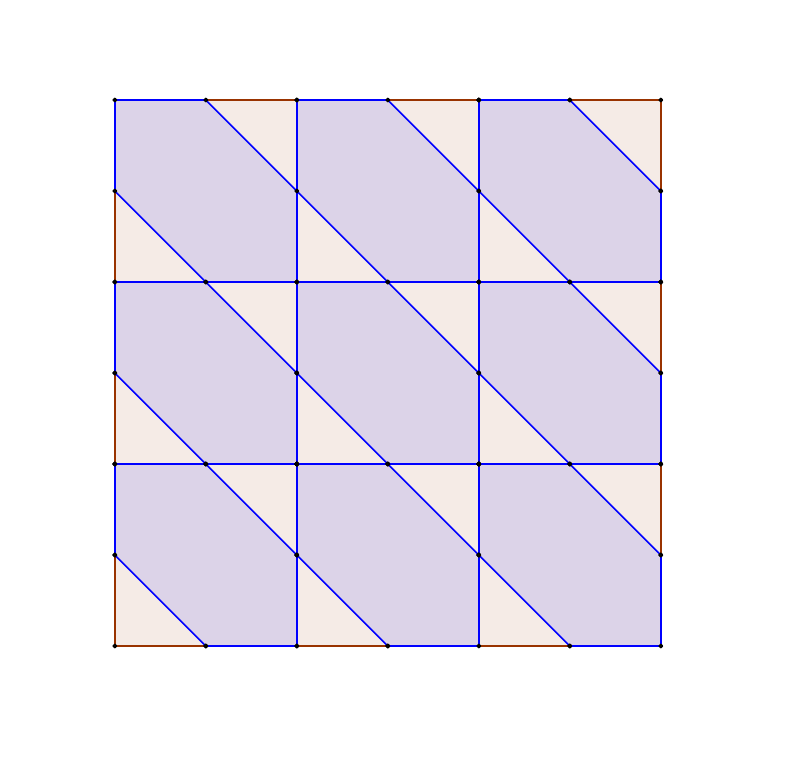}
\vspace{-7mm}
\caption{Totally separable lattice packings of affine squares and inscribed affine regular convex hexagons demonstrating a Hadwiger number of $8$ and $6$, respectively.}
\label{fig:lattice}
\end{figure}

This proves part (A) of Theorem \ref{mega}. Note that the result is sharp in the sense that there exist non-smooth convex domains with higher separable Hadwiger numbers as shown in Figure \ref{fig:lattice}. In fact, using a result of Gr\"{u}nbaum \cite{Gr}, it follows that for any convex domain $\K$, $H_{\rm sep}(\K)\in \{4, 5, 6, 8\}$.

\section{Maximum separable contact numbers} \label{sec:plane}
In the classical paper \cite{Har}, Harborth showed that in the Euclidean plane 
\begin{equation}\label{eq:harborth}
c(n, 2)=\lfloor 3n-\sqrt{12n-3}\rfloor, 
\end{equation}
for all $n\ge 2$. Brass \cite{Br} extended \eqref{eq:harborth} showing that if ${\bf K}$ is a convex domain different from an affine square in $\mathbb{E}^{2}$ then for all $n\geq 2$, one has $c({\bf K},n,2)=\lfloor 3n-\sqrt{12n-3}\rfloor$ and if ${\bf K}$ is an affine square, then $c({\bf K},n,2)=\lfloor 4n-\sqrt{28n-12}\rfloor$ holds for all $n\ge 2$. 

\begin{definition}
Let $\K$ be a convex body in $\E^d$ and $n$ be a positive integer. We define the {\it maximum separable contact number} $\csep(\K, n, d)$ of $\K$ as the maximum number of edges in a contact graph of a totally separable packing of $n$ translates of $\K$ in $\E^d$.
\end{definition}

The following natural question was raised in \cite{BeKh}.

\begin{problem}\label{separable brass}
Find an analogue of Brass' result for totally separable translative packings of an arbitrary convex domain $\K$. 
\end{problem}

Recently, Bezdek, Szalkai  and Szalkai \cite{BeSzSz} used Harborth's method to solve the Euclidean case of Problem \ref{separable brass}, namely, they proved that 
\begin{equation}\label{eq:bezdek}
\csep (n,2) = \floor{2n-2\sqrt{n}}. 
\end{equation}
In this section, we address Problem \ref{separable brass} for every smooth strictly convex domain $\K$, showing that $c_{\rm sep}(\K, n, 2) = c_{\rm sep}(n, 2)$, which proves part (B) of Theorem \ref{mega}. The proof technique, among other ideas, makes use of angle measures in normed planes, which we discuss now.   

\begin{definition}\label{def:angle}
Let $\K_{\oo}\subseteq \R^2$ be an $\oo$-symmetric convex domain in $\E^2$. An {\it angular measure}, also called an {\it angle measure}, in $(\R^{2}, \norm{\cdot}_{\K_{\oo}})$ is a measure $\mu$ defined on $\bd \K_{\oo}$ that can be extended in a translation-invariant way to measure angles anywhere and satisfies the following properties \cite{Br}: 
\begin{enumerate}
    \item[(i)] $\mu(\bd \K_{\oo})=2\pi$. 
    \item[(ii)] For any Borel set $X\subseteq \bd \K_{\oo}$, $\mu(X)=\mu(-X)$. 
    \item[(iii)] For each $\x\in \bd \K_{\oo}$, $\mu(\{\x\})=0$. 
\end{enumerate}
\end{definition}

For any $\x, \y\in \bd \K_{\oo}$, we write $\mu([\x,\y]_{\K_{\oo}})$ for the measure of the angle subtended by the arc $[\x, \y]_{\K_{\oo}}$ at $\oo$. In \cite{BaHoMaTe, Du}, angle measures are required to satisfy a fourth non-degeneracy condition, namely, for any $\x\ne \y \in \bd \K_{\oo}$, $\mu([\x, \y]_{\K_{\oo}})>0$. Here it suffices to adopt Brass' definition. We refer the interested reader to \cite{BaHoMaTe} for a very recent expository treatment of angle measures. 

Note that the usual Euclidean angle measure in the plane satisfies these conditions. Moreover for any angle measure in $(\R^{2}, \norm{\cdot}_{\K_{\oo}})$, the sum of interior angles of any simple $n$-gon in $\R^{2}$ equals $(n-2)\pi$ \cite{Br}. This observation and the following notion of a $B$-measure will be used in the proof of Theorem \ref{circular}. 

\begin{definition}
An angular measure $\mu$ in the plane $(\R^{2}, \norm{\cdot}_{\K_{\oo}})$ is called a {\em $B$-measure} \cite{Fa} if for any $\x,\y\in \bd \K_{\oo}$, $\x\dashv_{\K_{\oo}}\y$ implies that $\mu([\x,\y]_{\K_{\oo}})=\pi/2$.
\end{definition}




\subsection{Smooth $B$-domains and their maximum separable contact numbers}\label{sec:circular}
In this section, we define a class of convex domains, which we call $B$-domains, and obtain an exact formula for the maximum contact number of totally separable packings of $n$ translates of any smooth $B$-domain. The name $B$-domain stems from the connection with $B$-measures.


\begin{definition}\label{b-domain}
Let $\mathbf{D}\subseteq \E^2$ be an $\oo$-symmetric convex domain, then $\mathbf{D}$ is called a {\it $B$-domain} if there is a $B$-measure defined in $(\R^2 , \norm{\cdot}_{\mathbf{D}})$.
\end{definition}

\noindent From \cite{Fa}, the class of $B$-domains includes circular disk, unit disks of Radon planes including affine regular convex hexagon (in fact, all regular convex $2n$-gons, where $n$ is odd \cite{MaSwWe}) and, more generally, convex domains whose boundary contains a Radon arc. We will shortly see how having a $B$-measure helps when computing the maximum separable contact number. 

Before stating the main result of this section, we take a detour to introduce some ideas that will be needed in its proof. Consider the two-dimensional integer lattice ${\mathbb{Z}}^{2}$, which can also be thought of as an infinite plane tiling array of unit squares called lattice cells. For convenience, we imagine these squares to be centered at the integer points, rather than having their vertices at these points. 

\begin{definition}\label{def:poly}
Two lattice cells of $\mathbb{Z}^2$ are \textit{connected} if they share an edge. A \textit{polyomino} or {\it $n$-omino} is a collection of $n$ lattice cells of ${\mathbb{Z}}^{2}$ such that from any cell we can reach any other cell through consecutive connected cells. 
\end{definition}

\begin{definition}
A packing of congruent unit diameter circular disks centered at the points of ${\mathbb{Z}}^{2}$ is called a \textit{digital circle packing} \cite[section 6]{BeKh}. We denote the maximum contact number of such a packing of $n$ circular disks by $c_{{\mathbb{Z}}}(n, 2)$. 
\end{definition}


Recall that $c_{\rm sep}(n, 2) = \csep(\B^2 , n, 2)$. Clearly, every digital circle packing is totally separable and $c_{{\mathbb{Z}}}(n,2)\leq c_{\rm sep}(n, 2)$. Consider a digital packing of $n$ circular disks inscribed in the cells of an $n$-omino. Since each circular disk touches its circumscribing square at the mid-point of each edge and at no other point, it follows that the number of edges shared between the cells of the polyomino equals the contact number of the corresponding digital circle packing. 

Through the rest of this paper $k$, $\ell$ and $\epsilon$ are integers satisfying $\epsilon\in \{0,1\}$ and $0\le k< \ell+\epsilon$. We note that any positive integer $n$ can be uniquely expressed as $n=\ell(\ell+\epsilon)+k$ (as in \cite{AlCe}). We call this the {\it decomposition of $n$}. 

Harary and Harborth \cite{HaHa} studied minimum-perimeter $n$-ominoes and Alonso and Cerf \cite{AlCe} characterized these in $\mathbb{Z}^2$. The latter also constructed a special class of minimum-perimeter polyominoes called basic polyominoes. Let $n=\ell(\ell+\epsilon)+k$. A {\it basic $n$-omino in $\mathbb{Z}^2$} is formed by first completing a quasi-square $Q_{\alpha\times \beta}$ (a rectangle whose dimensions differ by at most $1$ unit) of dimensions $\alpha\times \beta$ with $\{\alpha, \beta\}=\{\ell, \ell+\epsilon\}$ and then attaching a strip $S_{1\times k}$ of dimensions $1\times k$ (resp. $S_{k\times 1}$ of dimensions $k\times 1$) to a vertical side of the quasi-square (resp. a horizontal side of the quasi-square). Here, we assume that the first dimension is along the horizontal direction. Moreover, we denote any of the resulting polyominoes by $Q_{\alpha \times \beta}+ S_{1\times k}$ (resp. $Q_{\alpha \times \beta}+ S_{k\times 1}$). The results from \cite{AlCe,HaHa} indirectly show that $c_{\mathbb{Z}}(n, 2)=\floor{2n-2\sqrt{n}}$, which together with \eqref{eq:bezdek} implies that $\csep(n, 2)=c_{\mathbb{Z}}(n, 2)$. Thus maximal contact digital packings of $n$ circular disks are among maximal contact totally separable packings of $n$ circular disks.

In order to make use of these ideas, we present analogues of polyominoes and digital circle packings in arbitrary normed planes. 

\begin{definition}
Let $\K_{\oo}$ be a smooth $\oo$-symmetric convex domain in $\E^2$ and $\mathbf{P}$ any parallelogram (not necessarily of minimum area) circumscribing $\K_{\oo}$ such that $\K_{\oo}$ touches each side of $\mathbf{P}$ at its midpoint (and not at the corners of $\mathbf{P}$ as $\K_{\oo}$ is smooth). Let $\x$ and $\y$ be the midpoints of any two adjacent sides of $\mathbf{P}$. Then $-\x$ and $-\y$ are also points of $\bd\K_{\oo}\cap\bd\mathbf{P}$. It is easy to see that $\{\x, \y\}$ is an Auerbach basis of the normed plane $(\R^2 , \norm{\cdot}_{\K_{\oo}})$. We call the lattice ${\cal{L}}_\mathbf{P}$ in $(\R^2 , \norm{\cdot}_{\K_{\oo}})$ with fundamental cell $\mathbf{P}$, an {\it Auerbach lattice} of $\K_{\oo}$ as we can think of ${\cal{L}}_\mathbf{P}$ as being generated by the Auerbach basis $\{\x, \y\}$ of $(\R^2 , \norm{\cdot}_{\K_{\oo}})$. 
\end{definition}

On the other hand, any Auerbach basis $\{\x, \y\}$ of a smooth $\oo$-symmetric convex domain $\K_{\oo}$ generates an Auerbach lattice ${\cal{L}}_\mathbf{P}$ of $\K_{\oo}$, with fundamental cell determined by the unique lines supporting $\K_{\oo}$ at $\x$, $\y$, $-\x$ and $-\y$, respectively. In the sequel, we will use ${\cal{L}}_{\mathbf{P}}$ to denote the tiling of $\R^2$ by translates of $\mathbf{P}$ as well as the set of centers of the tiling cells. Indeed, the integer lattice $\mathbb{Z}^2$ is an Auerbach lattice of the circular disk $\B^2$.

\begin{figure}[t]
\centering
\includegraphics[scale=.35]{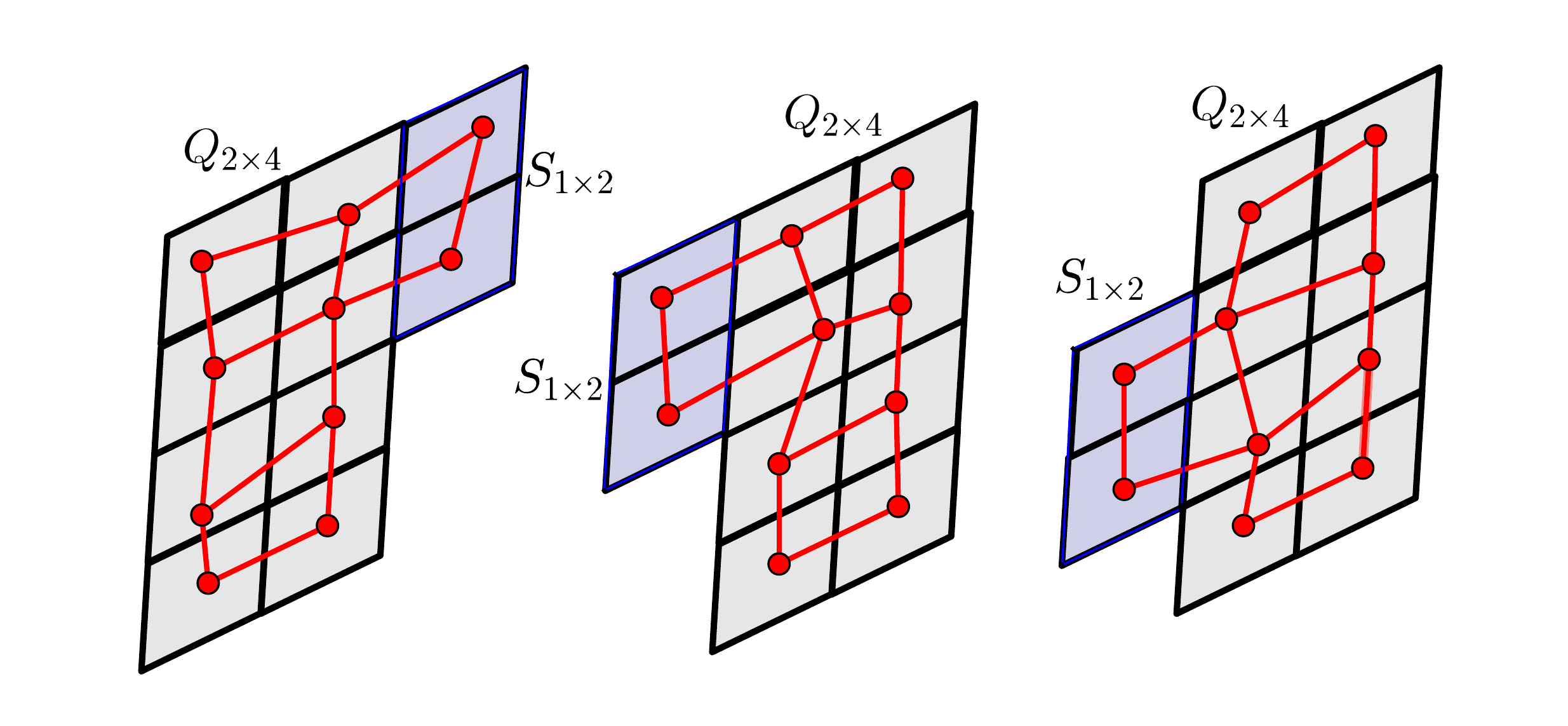}

\vspace{-.5cm}

\caption{Some realizations of the basic polyomino $Q_{2 \times 4}+ S_{1\times 2}$ in some Auerbach lattice and the corresponding graphs.}
\label{fig:polyomino}
\end{figure}

Given an Auerbach lattice ${\cal{L}}_\mathbf{P}$ of a smooth $\oo$-symmetric convex domain $\K_{\oo}\subseteq \E^2$ corresponding to the Auerbach basis $\{\x, \y\}$, we define polyominoes in ${\cal{L}}_\mathbf{P}$ as in Definition \ref{def:poly}. We also define {\it basic $n$-ominoes in ${\cal{L}}_{\mathbf{P}}$} on the same lines as in $\mathbb{Z}^2$ with the first dimension along $\x$, while the second dimension along $\y$. The {\it left} and ${\it right}$ rows of an ${\cal{L}}_{\mathbf{P}}$-polyomino $p$ are defined along $\x$-direction, while the {\it top} and {\it bottom} rows are defined along $\y$-direction in the natural way. The {\it base-lines} of $p$ are the four sides of a minimal area parallelogram containing $p$ and are designated (in a natural way) as the top, bottom, right and left base-line of $p$. The {\it graph of $p$}, denoted by $G(p)$, has a vertex for each cell of $p$, with two vertices adjacent if and only if the corresponding cells share a side. Figure \ref{fig:polyomino} shows some basic polyominoes and their graphs in some Auerbach lattice. We refer to the translates of $\K_{\oo}$ centered at the lattice points of ${\cal{L}}_\mathbf{P}$ (inscribed in the cells of ${\cal{L}}_\mathbf{P}$) as {\it ${\cal{L}}_\mathbf{P}$-translates} of $\K_{\oo}$. Any packing of such translates will be called an {\it ${\cal{L}}_\mathbf{P}$-packing} of $\K_{\oo}$. Since ${\cal{L}}_\mathbf{P}$ is a linear image of $\mathbb{Z}^2$, the results of \cite{AlCe,HaHa}  also hold for ${\cal{L}}_\mathbf{P}$-polyominoes. 

\begin{lemma}\label{digital1}
Let $\K_{\oo}$ be a smooth $\oo$-symmetric convex domain, $n=\ell(\ell+\epsilon)+k$ be the decomposition of a positive integer $n$ and $p$ be an $n$-omino in an Auerbach lattice ${\cal{L}}_\mathbf{P}$ of $\K_{\oo}$. 
\item{(i)} If $\cal{P}$ is a packing of $n$ translates of $\K_{\oo}$ inscribed in the cells of $p$, then $G(p)$ is the contact graph of $\cal{P}$ and therefore, the number of edges in $G(p)$ is equal to the contact number $c({\cal{P}})$ of ${\cal{P}}$. 
\item{(ii)} If in addition $p$ is a minimum-perimeter (or basic) $n$-omino, then $c({\cal{P}})=\floor{2n-2\sqrt{n}}$. 
\end{lemma}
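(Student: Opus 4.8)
The plan is to treat the two parts in turn, with part~(i) carrying the geometry and part~(ii) being combinatorial bookkeeping on top of it. For part~(i) I would first record the two structural facts supplied by the Auerbach lattice. Because $\mathbf{P}$ circumscribes $\K_{\oo}$ with $\K_{\oo}$ touching each side of $\mathbf{P}$ at its midpoint, every ${\cal{L}}_\mathbf{P}$-translate of $\K_{\oo}$ is contained in its cell and meets the boundary of that cell in exactly the four side-midpoints; and since $\K_{\oo}$ is smooth it is supported by a unique line at each such midpoint, so it passes through no corner of $\mathbf{P}$. I would then read off contacts from the relative position of cells. If two cells of $p$ share a side, both inscribed translates touch that common side at its midpoint, so the corresponding elements of ${\cal P}$ touch there; this realises every edge of $G(p)$ as a contact. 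Conversely, two distinct cells that do not share a side are either disjoint or meet only in a corner: in the first case the two translates lie in interiors of disjoint cells and cannot meet, while in the second case neither translate reaches the shared corner, by smoothness, so again they do not touch. Hence the contact graph of ${\cal P}$ is exactly $G(p)$, and $c({\cal P})$ equals the number of edges of $G(p)$.

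For part~(ii) I would use part~(i) to replace $c({\cal P})$ by $|E(G(p))|$ and then invoke the edge--perimeter identity for polyominoes: counting the $4n$ unit sides of the cells, with each shared side counted twice and each boundary side once, gives $4n = 2\,|E(G(p))| + b$, where $b$ denotes the perimeter of $p$; hence $|E(G(p))| = 2n - b/2$. Since ${\cal{L}}_\mathbf{P}$ is a linear image of $\mathbb{Z}^2$, the adjacency structure, perimeter, and edge count of $p$ agree with those of the corresponding $\mathbb{Z}^2$-polyomino, so a minimum-perimeter (basic) $n$-omino attains the minimum perimeter $b = 2\lceil 2\sqrt{n}\rceil$ established by Harary and Harborth \cite{HaHa} and Alonso and Cerf \cite{AlCe}. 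Substituting gives $|E(G(p))| = 2n - \lceil 2\sqrt{n}\rceil$, and the identity $\floor{-x} = -\lceil x\rceil$ together with $2n\in\mathbb{Z}$ yields $2n - \lceil 2\sqrt{n}\rceil = \floor{2n - 2\sqrt{n}}$. Equivalently, this is the relation $c_{\mathbb{Z}}(n,2)=\floor{2n-2\sqrt{n}}$ quoted earlier, transported to ${\cal{L}}_\mathbf{P}$ along the linear isomorphism ${\cal{L}}_\mathbf{P}\cong\mathbb{Z}^2$.

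The hard part will be the converse direction of part~(i), namely excluding contacts between translates whose cells meet only at a corner. This is exactly where smoothness of $\K_{\oo}$ is essential: for a non-smooth domain such as an affine square the inscribed translates do reach the corners and diagonal neighbours touch, which is precisely why such domains are excluded here and why Figure~\ref{fig:lattice} records a strictly larger separable Hadwiger number for them. Part~(ii), once the contact graph has been identified with $G(p)$, reduces to the edge--perimeter identity and the known minimum-perimeter formula carried across ${\cal{L}}_\mathbf{P}\cong\mathbb{Z}^2$, so it poses no genuine difficulty.
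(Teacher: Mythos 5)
Your proof is correct and takes essentially the same route as the paper: part (i) rests on precisely the same two observations (inscribed translates touch each cell side at its midpoint, and smoothness prevents any translate from reaching a cell corner, so translates touch if and only if their cells share a side), giving that the contact graph is exactly $G(p)$. For part (ii) the paper simply cites Harary--Harborth and Alonso--Cerf; your edge--perimeter identity $4n = 2\,|E(G(p))| + b$ combined with the minimum perimeter $b = 2\lceil 2\sqrt{n}\,\rceil$ and the conversion $2n - \lceil 2\sqrt{n}\,\rceil = \lfloor 2n - 2\sqrt{n}\rfloor$ merely unpacks that citation, so the content is the same.
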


\begin{proof}
Since $\K_{\oo}$ is smooth, no ${\cal{L}}_\mathbf{P}$-translate of $\K_{\oo}$ meets the cell of ${\cal{L}}_\mathbf{P}$ containing it at a corner of the cell. Also any ${\cal{L}}_\mathbf{P}$-translates of $\K_{\oo}$ touches the cell containing it at the midpoints of the four sides of the cell. Therefore, two ${\cal{L}}_\mathbf{P}$-translates of $\K_{\oo}$ touch if and only if the cells of ${\cal{L}}_\mathbf{P}$ containing them share a side. This proves (i). 

Statement (ii) now follows from (i) and \cite{AlCe, HaHa}. 
\end{proof}

We now show in Theorem \ref{circular} that $\csep(\mathbf{D}, n, 2)=\floor{2n-2\sqrt{n}}$, for any smooth $B$-domain $\mathbf{D}$. The existence of a $B$-measure plays a key role in the proof as it provides us a Euclidean-like angle measure to work with. The proof also heavily relies on the ${\cal{L}}_{\mathbf{P}}$-packing ideas discussed above. Smoothness is needed for it allows us to make use of Lemma \ref{digital1} and Theorem \ref{2Dmain} or the following special case of Theorem \ref{2Dmain} that can be proved independently. 

\begin{remark}\label{rem2}
Suppose $\mathbf{D}$ is a smooth $B$-domain, $\mu$ a $B$-measure in $(\R^2 , \norm{\cdot}_{\mathbf{D}})$ and $S$ any separable point set on $\bd \mathbf{D}$. Then for any $\x, \y\in S$, $\mu([\x, \y]_{\mathbf{D}})\ge\pi/2$. Since $\mu(\bd\mathbf{D})=2\pi$, this implies that $\left|S\right|\le4$. By Lemma \ref{lower}, we conclude that $H_{\rm sep}(\mathbf{D})=4$.  
\end{remark}

Moreover, Theorem \ref{circular} is sharp in the sense that it no longer holds if we remove the smoothness assumption. This can be seen through the totally separable packing of $9$ translates of an affine regular convex hexagon given in Figure \ref{fig:lattice}. 

\begin{theorem}\label{circular}
If $\mathbf{D}$ is a smooth $B$-domain in $\E^2$ and $n\geq 2$, then we have 
\begin{equation}\label{eq:contact}
\csep (\mathbf{D},n,2) = \csep (n,2) = \floor{2n-2\sqrt{n}}.
\end{equation}
\end{theorem}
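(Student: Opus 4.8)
The plan is to establish the two inequalities $\csep(\D, n, 2) \ge \floor{2n - 2\sqrt n}$ and $\csep(\D, n, 2) \le \floor{2n - 2\sqrt n}$ separately, the middle identity $\csep(n,2)=\floor{2n-2\sqrt n}$ being \eqref{eq:bezdek}. Throughout I may assume $\D$ is $\oo$-symmetric: by Lemma \ref{minkowski} total separability is preserved under Minkowski symmetrization and the contact graph is unchanged, so $\csep(\D, n, 2) = \csep(\D_\oo, n, 2)$, and $\D_\oo$ is again a smooth $B$-domain.

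For the lower bound I would fix the decomposition $n = \ell(\ell+\epsilon) + k$ and take a basic (hence minimum-perimeter) $n$-omino $p$ in an Auerbach lattice $\mathcal{L}_{\mathbf{P}}$ of $\D$, together with the packing $\mathcal P$ of the $\mathcal{L}_{\mathbf{P}}$-translates inscribed in the cells of $p$. Each grid line of $\mathcal{L}_{\mathbf{P}}$ supports the inscribed translates at the midpoints of cell sides and, by smoothness, meets no interior; hence these lines separate every pair and $\mathcal P$ is totally separable. Lemma \ref{digital1}(ii) then gives $c(\mathcal P) = \floor{2n - 2\sqrt n}$, so $\csep(\D, n, 2) \ge \floor{2n - 2\sqrt n}$.

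The work is in the upper bound. Let $\mathcal P$ be any totally separable packing of $n$ translates of $\D$ with contact graph $G$ and $e$ edges. I would first record the local structure furnished by the $B$-measure $\mu$: for a body $A$, writing each neighbour's contact as the point $\p \in \bd\D$ for which the neighbour is centred at $(\text{centre of } A) + 2\p$, total separability forces the set of such $\p$ to be a separable point set on $\bd\D$ (the packing form of Lemma \ref{caps}). By Remark \ref{rem2} any two of these points subtend $\mu$-angle $\ge \pi/2$, so $A$ has at most $4$ neighbours and no two of its contact directions are closer than $\pi/2$. Drawing $G$ with straight segments between centres, this means every face angle is $\ge \pi/2$; in particular three bodies cannot pairwise touch, for their centre triangle would have interior $\mu$-angle sum $\ge 3\pi/2 > \pi$, contradicting the angle-sum law $(n-2)\pi$ for simple $n$-gons. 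Thus $G$ is planar with girth $\ge 4$, and Euler's formula together with every bounded face having length $\ge 4$ yields $e \le 2n - 2 - b/2$, where $b$ is the length of the outer boundary of $G$.

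It remains to show $b \ge 4\sqrt n - 4$, which I expect to be the main obstacle. I would obtain this isoperimetric estimate by running Harborth's boundary-counting argument in the $B$-measure: because the local picture at every body is identical to that in a digital circle packing (at most four admissible directions, pairwise at least $\pi/2$ apart and exactly $\pi/2$ apart once the degree is $4$), the outer boundary behaves like the perimeter of a polyomino, and the minimum-perimeter theorem of Harary--Harborth and Alonso--Cerf \cite{HaHa, AlCe} delivers $b/2 \ge 2\sqrt n - 2$. Combining gives $e \le 2n - 2 - (2\sqrt n - 2) = 2n - 2\sqrt n$, whence $e \le \floor{2n - 2\sqrt n}$ since $e$ is an integer. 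The delicate point is that a general totally separable packing of $\D$ need not be grid-aligned, as the separating supporting lines rotate with the contact point; the isoperimetric inequality therefore cannot simply be quoted for a sublattice of $\mathbb{Z}^2$ but must be extracted from the angular data supplied by $\mu$, which is precisely where smoothness and the $B$-domain hypothesis enter.
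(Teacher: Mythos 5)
Your lower bound and your local analysis (degree at most $4$, pairwise contact directions at $\mu$-angle at least $\pi/2$, hence girth at least $4$ and the Euler-type bound $e \le 2n-2-b/2$) are sound and match the paper. But the step you yourself flag as the main obstacle — the isoperimetric inequality $b \ge 4\sqrt{n}-4$ for the outer boundary — is a genuine gap, not a deferrable technicality: it is the entire content of the theorem. It cannot be obtained by "running Harborth's boundary-counting argument in the $B$-measure" as a direct estimate, because Harary--Harborth and Alonso--Cerf are statements about edge-aligned polyominoes in $\mathbb{Z}^2$, and a totally separable packing of $\mathbf{D}$ is in general not lattice-aligned (the separating lines rotate with the contact point, as you note). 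More importantly, Harborth's method is not a direct isoperimetric estimate at all; it is an induction on $n$ in which the boundary polygon is \emph{deleted}. The paper follows exactly this route: it never proves a lower bound on the boundary length. Instead, writing $v$ for the number of vertices of the outer polygon $P$, the $B$-measure angle-sum argument gives $v_2+2v_3+3v_4 \le 2v-4$, Euler's formula and the girth bound give the \emph{upper} bound $2\csep(n)-3n+4 \le n-v$ on $v$, and deleting $P$ gives $\csep(n) \le \csep(n-v)+2v-4$; combining these with the inductive hypothesis $\csep(n-v)\le 2(n-v)-2\sqrt{n-v}$ yields the quadratic inequality $\csep(n)^2-4n\csep(n)+(4n^2-4n)\ge 0$, hence $\csep(n)\le 2n-2\sqrt{n}$. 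Note the inversion: the inductive scheme only ever needs $v$ bounded \emph{above}, which Euler counting supplies, whereas your direct scheme needs the boundary bounded \emph{below}, which nothing supplies.

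A second, smaller gap: your face counting presupposes that every face of $G$, including the outer one, is bounded by a cycle, i.e.\ that $G$ is $2$-connected with minimum degree $2$. This is not automatic for a maximal-contact totally separable packing and the paper devotes substantial effort to it (Claim 1), including an exceptional graph $G(p_7)$ on $7$ vertices that has minimum degree $2$ but is \emph{not} $2$-connected, and a case analysis that repairs cut vertices by polyomino surgery — an argument that itself invokes the inductive hypothesis that basic-polyomino packings are optimal among packings of fewer translates. Without the induction set up from the start, neither the degree reduction, nor $2$-connectivity, nor the deletion step is available, so your proposal as written cannot be completed along the lines you indicate; it would have to be restructured into the paper's inductive form.
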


\begin{proof}
First, we establish the lower bound whose proof neither uses smoothness nor the $B$-measure. Consider an Auerbach lattice ${\cal{L}}_\mathbf{P}$ of $\mathbf{D}$ corresponding to an Auerbach basis $\{\x,\y\}$ of $(\R^2, \norm{\cdot}_{\mathbf{D}})$. Then ${\cal{L}}_\mathbf{P} = T(\mathbb{Z}^2 )$, for some linear transformation $T:\R^2 \to \R^2$. Now for any $n\ge 2$, consider a maximal contact digital packing $\cal{C}$ of $n$ circular disks and let $p$ be the corresponding polyomino in $\mathbb{Z}^2$. Then $T(p)$ is an ${\cal{L}}_{\mathbf{P}}$-polyomino with $n$ cells. Let $\cal{P}$ be the packing of ${\cal{L}}_{\mathbf{P}}$-translates of $\mathbf{D}$ inscribed in the cells of $T(p)$, then by Lemma \ref{digital1}, the contact number of $\cal{P}$ is at least as large as the contact number of $\cal{C}$. Thus $\csep(\mathbf{D}, n, 2) \geq \floor{2n-2\sqrt{n}}$.

\begin{figure}[t]
\centering
\includegraphics[scale=.5]{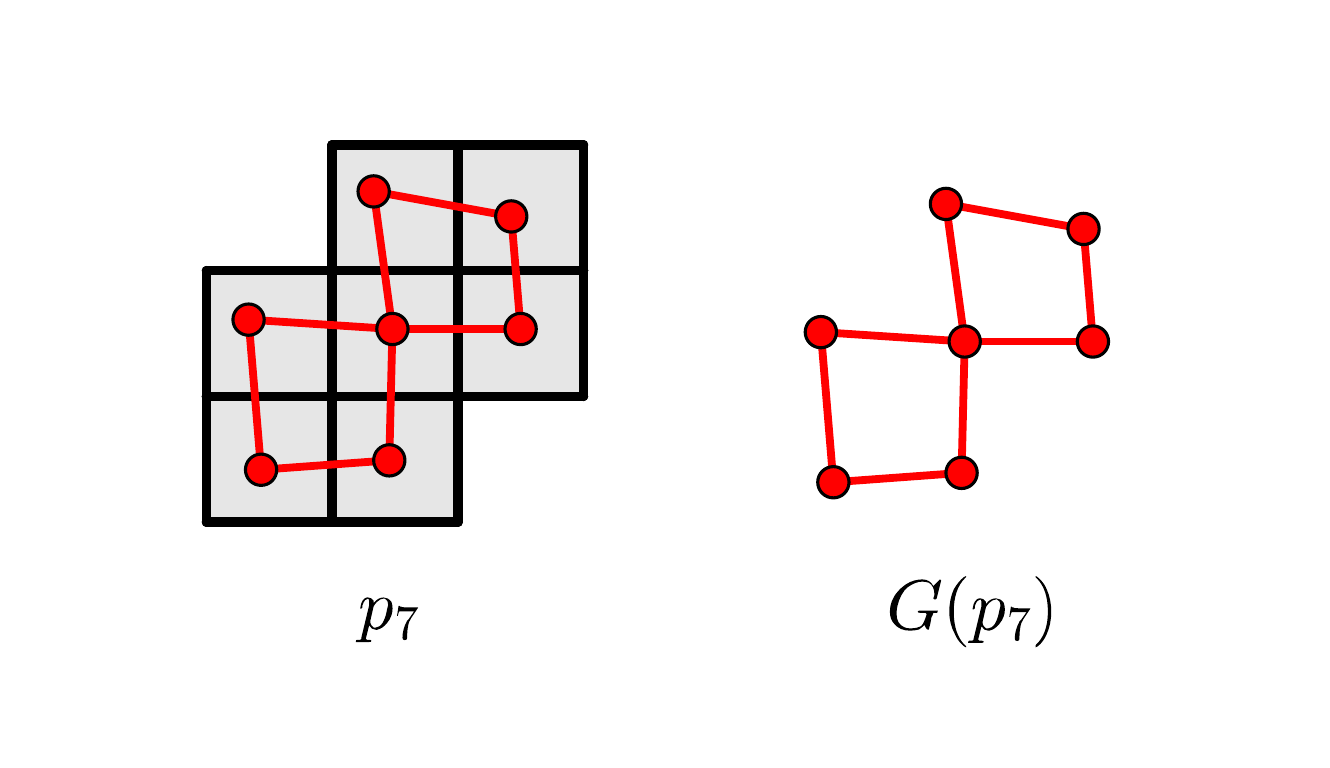}

\vspace{-1cm}

\caption{A $7$-omino $p_7$ whose graph $G(p_7 )$ has minimum degree $2$ but is not $2$-connected.}
\label{fig:pathology}
\end{figure}

Since \eqref{eq:contact} clearly holds for $n\le 3$, for proving the reverse inequality, we proceed by induction on $n$, the number of translates in the packing. Our approach has its origins in the proof outline of \eqref{eq:bezdek} in \cite{BeSzSz}, but requires more sophisticated tools. For the sake of brevity, we write $\csep(n)=\csep(\mathbf{D},n,2)$. Suppose \eqref{eq:contact} is true for totally separable packings of up to $n-1$ translates of $\mathbf{D}$. By Lemma \ref{digital1}, we may assume that for $j\le n-1$, ${\cal{L}}_{\mathbf{P}}$-packings of $j$ translates of $\D$ inscribed in the cells of a basic polyomino in ${\cal{L}}_{\mathbf{P}}$ are among the maximal contact totally separable packings of $j$ translates of $\D$. Let $G$ denote the contact graph of a maximal contact totally separable packing $\cal{P}$ of $n\ge 4$ translates of $\mathbf{D}$. Since $n\ge 2$ and $\csep(n-1)+1=\floor{2(n-1)-2\sqrt{n-1}}+1 \le \floor{2n-2\sqrt{n}}$, we can assume without loss of generality that every vertex of $G$ has degree at least $2$. 

Given a vertex $v$ of $G$, let $\mathbf{D}^v$ denote the corresponding translate in $\cal{P}$. Let $G-v$ denote the graph obtained by deleting $v$ and all the edges incident to $v$ from $G$. Clearly, $G-v$ is the contact graph of the packing ${\cal{P}}\setminus \{\mathbf{D}^v\}$. For a subgraph $H$ of $G$, we define $G-H$ analogously. By performing an affine transformation, if necessary, we may assume that the fundamental cell $\mathbf{P}$ of ${\cal{L}}_{\mathbf{P}}$ is a square. Moreover, we may assume the $\x$-direction to be horizontal and the $\y$-direction to be vertical. This readily allows us to designate the left, right, top and bottom side of a cell (resp. row of a polyomino). 

We now show that except in one exceptional case, $G$ must be $2$-connected. Suppose $n=7$ and $G$ is isomorphic to the graph $G(p_7 )$ of the non-basic polyomino $p_7$ shown in Figure \ref{fig:pathology}. We note that $G(p_7 )$ has minimum degree $2$, but is not $2$-connected. However, this does not cause any issues as  $G$ has $8=\floor{2(7) - 2\sqrt{7}}$ edges and therefore, satisfies the desired upper bound of $\floor{2n - 2\sqrt{n}}$ on the contact number of the underlying packing.  Moreover, none of the contact graphs arising through the rest of the proof is isomorphic to $G(p_7 )$. 

\vspace{2mm}

\noindent Claim 1: If $G$ is not isomorphic to $G(p_7 )$, then $G$ is $2$-connected. 

\vspace{2mm}

Suppose $n=4$. If $u$ is a vertex of $G$ of degree $1$, then ${\cal{P}}\setminus \{\D^u\}$ is a totally separable packing of $3$ translates of $\mathbf{D}$ such that $c({\cal{P}}\setminus \{\D^u\})=3>2=\csep(\mathbf{D}, 3, 2)$, a contradiction. Therefore, $G$ has minimum degree at least $2$. If $v$ is a vertex of $G$ such that $G-v$ is disconnected. Then at least one of the connected components of $G-v$ consists of a single vertex $w$. But then $w$ has degree $1$ in $G$, a contradiction. Therefore $G$ is $2$-connected and we inductively assume $2$-connectedness for contact graphs not isomorphic to $G(p_7 )$ of up to $n-1$ translates of $\D$.


Now suppose that $n=\ell(\ell+\epsilon)+k> 4$ and there exists a vertex $v$ of $G$ such that $G-v$ is disconnected. Let $C$ be one of the connected components of $G-v$ containing the least positive number of neighbours of $v$. Since by Theorem \ref{2Dmain} (or Remark \ref{rem2}), $v$ has degree at most $4$, we encounter two cases. 

\vspace{2mm}

\noindent Case I: The component $C$ contains one neighbour of $v$. 

\vspace{2mm}

\noindent Sub-case I(a): If $G-C-v$ also contains $1$ neighbour of $v$, then we argue as follows. Suppose $C$ contains $n_1$ vertices and so $G-C-v$ consists of $n_2 = n-n_1 - 1$ vertices. Clearly, $n_1$ and $n_2$ are both at least $2$ as otherwise $G$ has a vertex of degree $1$. For $i=1,2$, let $n_i = \ell_i (\ell_i + \epsilon_i ) +k_i$ with $\epsilon_i \in \{0,1\}$, $0\le k_i < \ell_i +\epsilon_i$. 

Let $p_1$ be a realization of $Q_{(\ell_1 +\epsilon_1 )\times \ell_1 }+S_{1\times k_1 }$ with the strip $S_{1\times k_1 }$, if non-empty, forming the right row of $p_1$ such that the top base-lines of $Q_{(\ell_1 +\epsilon_1 )\times \ell_1 }$ and $S_{1\times k_1 }$ coincide. Also let $p_{2}$ be a realization of $Q_{(\ell_2 +\epsilon_2 )\times \ell_2 }+S_{1\times k_2 }$ with the strip $S_{1\times k_2 }$, if non-empty, forming the left row of $p_2$ such that the bottom base-lines of $Q_{(\ell_2 +\epsilon_2 )\times \ell_2 }$ and $S_{1\times k_2 }$ coincide. Denote the bottom-right cell of $p_1$ by $c_1$ and the top-right cell of $p_2$ by $c_2$. Now attach a single cell $c$ to the bottom side of $c_1$ and the right side of $c_2$ and let $p$ be the resulting polyomino as shown in Figure \ref{fig:poly-case-I} (a). Since $\ell_1 +\epsilon_1 \ge 2$, in $p$ there exists at least one cell of $p_1$ other than $c_1$ that shares a side with a cell of $p_2$. Now let ${\cal{P}}_1$ and ${\cal{P}}_2$ be the totally separable ${\cal{L}}_\mathbf{P}$-packings of translates of $\mathbf{D}$ inscribed in the cells of $p_1$ and $p_2$ in $p$, respectively, and ${\cal{P}}'={\cal{P}}_1 \cup {\cal{P}}_2 \cup\{\K_c \}$, where $\K_c$ is the ${\cal{L}}_\mathbf{P}$-translate of $\mathbf{D}$ inscribed in the cell $c$. Then using the inductive assumption about basic polyomino packings, $c({\cal{P}})\le \csep(\mathbf{D}, n_1 , 2) + \csep(\mathbf{D}, n_2 , 2)+ 2 = c({\cal{P}}_1 ) + c({\cal{P}}_2 ) + 2 < c({\cal{P}}_1 ) + c({\cal{P}}_2 )+ 3= c({\cal{P}}')$, a contradiction. 

\begin{figure}[t]
\centering
\includegraphics[scale=.5]{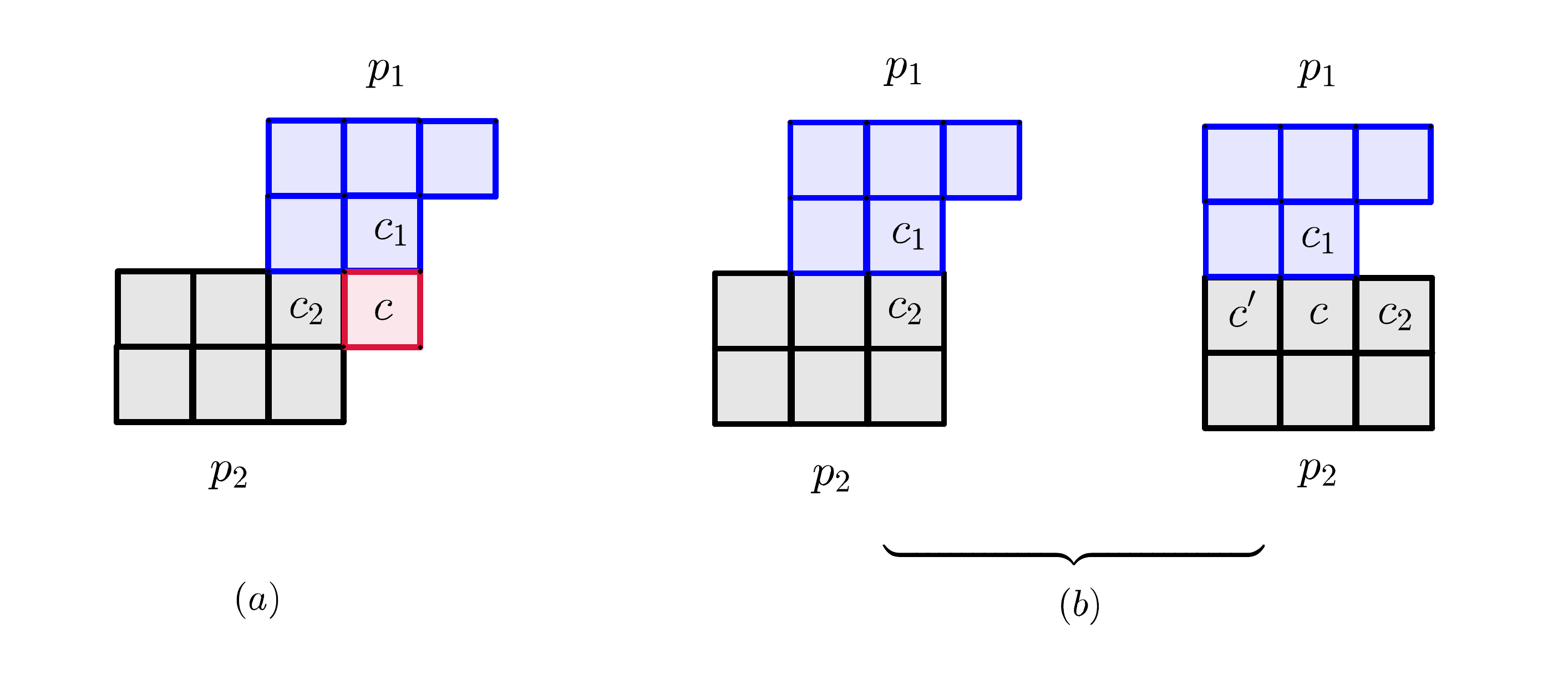}
\vspace{-.8cm}
\caption{Case I in the proof of Theorem \ref{circular} when $G-C-v$ contains (a) $1$ neighbour of $v$, (b) $2$ or $3$ neighbours of $v$.}
\label{fig:poly-case-I}
\end{figure}

\vspace{2mm}

\noindent Sub-case I(b): If $G-C-v$ contains $2$ or $3$ neighbours of $v$, then we modify the proof of I(a) as follows. Suppose $C$ contains $n_1$ vertices and so $G-C$ consists of $n_2 = n-n_1$ vertices. Clearly, $n_1 \ge 2$ and $n_2 \ge 3$ as otherwise $G$ has a vertex of degree $1$. For $i=1,2$, let $n_i = \ell_i (\ell_i + \epsilon_i ) +k_i$ with $\epsilon_i \in \{0,1\}$, $0\le k_i < \ell_i +\epsilon_i$. Note that $n_2$, $\ell_2$, $\epsilon_2$ and $k_2$ do not all have the same values as in Sub-case I(a). Let $p_1$, $p_2$, $c_1$ and $c_2$ be as in the proof of I(a). 

If $G-C-v$ contains $2$ neighbours of $v$, then attach the bottom side of $c_1$ to the top side of $c_2$ to form a polyomino $p$. Since $\ell_1 +\epsilon_1 \ge 2$ and $\ell_2 +\epsilon_2 \ge 2$, in $p$ there exists at least one cell of $p_1$ other than $c_1$ that shares a side with a cell of $p_2$. 

On the other hand, if $G-C-v$ contains $3$ neighbours of $v$ in $G$, say $x$, $y$ and $z$, then by the total separability of $\cal{P}$ and smoothness of $\mathbf{D}$, no two of $\mathbf{D}^x$, $\mathbf{D}^y$ and $\mathbf{D}^z$ touch. Since minimum degree of $G$ is at least two, each of the vertices $x$, $y$ and $z$ have a neighbour in $G-C-v$. Moreover, again by the total separability of $\cal{P}$ and smoothness of $\mathbf{D}$, $x$, $y$ and $z$ cannot have the same vertex of $G-C-v$ as a common neighbour. Therefore, $n_2 \ge 6$ and $\ell_2 + \epsilon_2 \ge 3$. Thus the top row of $p_2$ consists of at least $3$ cells $c_2$, $c$ and $c'$ ordered from right to left. We attach the bottom side of $c_1$ to the top side of $c$ to form a polyomino $p$. Since $\ell_1 +\epsilon_1 \ge 2$, there exists a cell of $p_1$ other than $c_1$ that shares its bottom side with the top side of $c'$.

\begin{figure}[t]
\centering
\includegraphics[scale=.5]{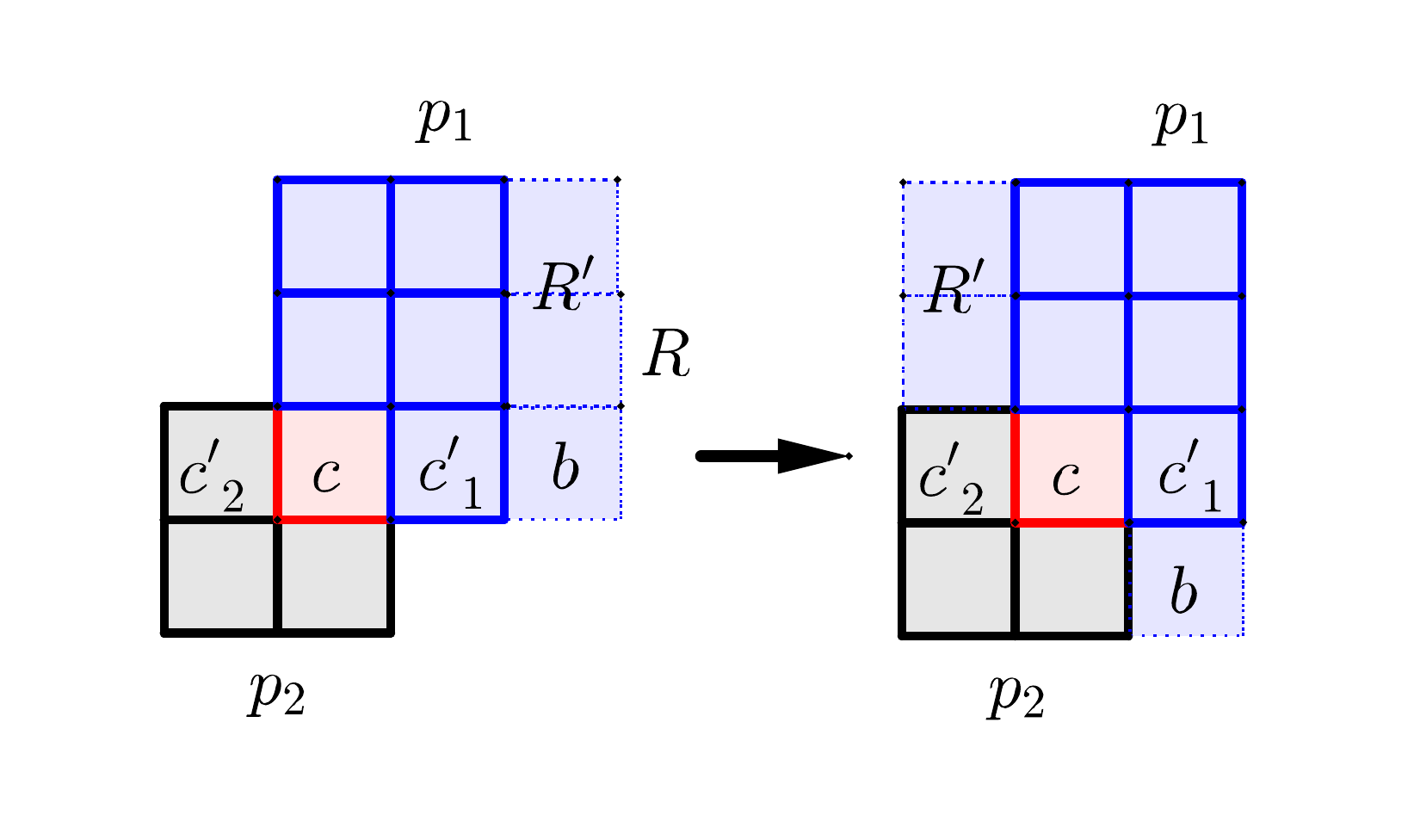}
\vspace{-1cm}
\caption{Case II in the proof of Theorem \ref{circular} when $p_1$ contains more cells than $p_2$.}
\label{fig:poly-case-IIa}
\end{figure}

In either scenario (illustrated in Figure \ref{fig:poly-case-I} (b)), let ${\cal{P}}_1$ and ${\cal{P}}_2$ be the totally separable ${\cal{L}}_\mathbf{P}$-packings of translates of $\mathbf{D}$ inscribed in the cells of $p_1$ and $p_2$ in $p$ and let ${\cal{P}}'={\cal{P}}_1 \cup {\cal{P}}_2 $. Then again using the inductive assumption about basic polyomino packings, $c({\cal{P}})\le \csep(\mathbf{D}, n_1 , 2) + \csep(\mathbf{D}, n_2 , 2)+ 1 = c({\cal{P}}_1 ) + c({\cal{P}}_2 ) + 1 < c({\cal{P}}_1 ) + c({\cal{P}}_2 )+ 2 = c({\cal{P}}')$, a contradiction. 

\vspace{2mm}

\noindent Case II: The component $C$ contains two neighbours of $v$. 

\vspace{2mm}

In this case, assume that $C$ consists of $n_1 -1$ vertices, so $G-C$ contains $n_2 = n-n_1 + 1$ vertices. Let $x$ and $y$ be the two neighbours of $v$ in $C$. Then by the total separability of the packing and smoothness of $\mathbf{D}$, $\D^x$ and $\D^y$ do not touch. However, since every vertex in $G$ has degree at least $2$, there exists at least one vertex $z\ne x, y$ in $C$. Thus $n_1 \ge 4$. A similar argument shows that $n_2 \ge 4$ and therefore, $n\ge 7$. Using the degree condition on $v$, the disconnectedness of $G-v$ and the total separability of the underlying packing, we observe that $G(p_7 )$ is the only $7$-vertex contact graph on which Case II applies. Since, $G$ is not isomorphic to $G(p_7 )$, we have $n\ge 8$. For $i=1,2$, let $n_i = \ell_i (\ell_i + \epsilon_i ) +k_i$ with $\epsilon_i \in \{0,1\}$, $0\le k_i < \ell_i +\epsilon_i$ and note that $n_1$, $\ell_1$, $\epsilon_1$ and $k_1$ do not all have the same values as in Sub-case I(b). Let $p_1$, $p_2$ and $c_2$ be as in the proof of I(a) and I(b), but $c_1$ be the bottom-left cell of $p_1$. Since $\ell_1 +\epsilon_1 \ge 2$, there exists a cell $c'_1$ adjacent to $c_1$ in the bottom row of $p_1$. Also since $\ell_2 +\epsilon_2 \ge 2$, there exists a cell $c'_2$ adjacent to $c_2$ in the top row of $p_2$. 

As $n\ge 8$, at least one of $p_1$ and $p_2$ consists of at least $5$ cells and at least $3$ vertical rows. Form an $n$-omino by overlapping the cells $c_1$ and $c_2$ into a single cell $c$. If $p_1$ contains more cells than $p_2$, translate the bottom cell $b$ of the right row $R$ of $p_1$ down and left and attach it to the bottom side of $c'_1$. Let $R'$ denote the rest of $R$. We move $R'$ so that the bottom side of $R'$ is attached to the top side of $c'_2$. Note that each cell of $R'$ shares its right side with a cell in the left row of $p_1$ as shown in Figure \ref{fig:poly-case-IIa}. If $p_2$ contains more cells than $p_1$, translate the top cell $t$ of the left row $L$ of $p_2$ up and right and attach it to the top side of $c'_2$. Let $L'$ denote the rest of $L$. We move $L'$ so that the top side of $L'$ is attached to the bottom side of $c'_1$. Note that each cell of $L'$ shares its left side with a cell in the right row of $p_2$ as shown in Figure \ref{fig:poly-case-IIb}. 

\begin{figure}[t]
\centering
\includegraphics[scale=.5]{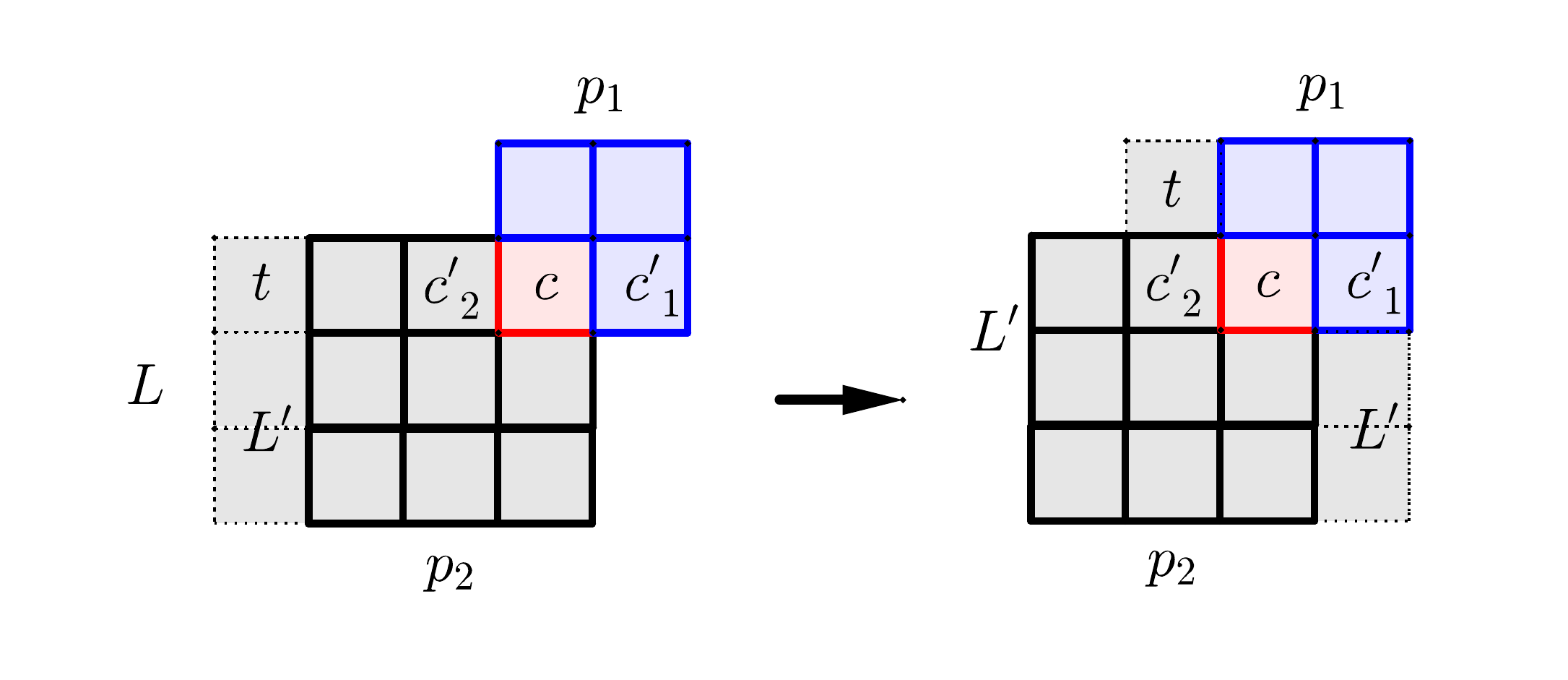}
\vspace{-1cm}
\caption{Case II in the proof of Theorem \ref{circular} when $p_2$ contains more cells than $p_1$.}
\label{fig:poly-case-IIb}
\end{figure}

Let $p$ be the resulting $n$-omino and ${\cal{P}}'$ be the totally separable ${\cal{L}}_{\mathbf{P}}$-packing of translates of $\mathbf{D}$ inscribed in the cells of $p$. Clearly, $c({\cal{P}})+1\le  c({\cal{P}}')$, a contradiction. This proves Claim 1.

Thus $G$ is a $2$-connected planar graph with $\csep(n)$ edges having minimum vertex degree at least $2$ and so, every face of $G$ -- including the external one -- is bounded by a cycle. Thus $G$ is bounded by a simple closed polygon $P$. Let $v$ denote the number of vertices of $P$. By Theorem \ref{2Dmain} (or Remark \ref{rem2}), the degree of each vertex in $G$ is $2$, $3$ or $4$. For $j\in \{2,3,4\}$, let $v_{j}$ be the number of vertices of $P$ of degree $j$. By definition of $B$-domains, there exists a $B$-measure $m$ in $(\R^2 , \norm{\cdot}_{\mathbf{D}})$ so that using total separability of our packing, the internal angle of $P$ at a vertex of degree $j$ is at least $\frac{(j-1)\pi}{2}$. Since the internal angle sum formula holds for angular measures, the sum of these angles will be $(v-2)\pi$. Clearly $v=v_{2}+v_{3}+v_{4}$, and thus we get the inequality
\begin{equation} \label{5eq1}
v_{2}+2v_{3}+3v_{4}\leq 2v-4.
\end{equation}

Now let $g_{j}$ be the number of internal faces of $G$ that have $j$ sides. By total separability and smoothness, $j\geq 4$. It follows from Euler's polyhedral formula that
\begin{equation} \label{5eq2}
n-\csep(n)+g_{4}+g_{5}+\ldots =1.
\end{equation}

In the process of adding up the number of sides of the internal faces of $G$, every edge of $P$ is counted once and all the other edges are counted twice. Therefore, 
\begin{equation}\label{5eq3}
4(g_{4}+g_{5}+\ldots)\leq 4g_{4}+5g_{5}+\ldots =v+2(\csep(n)-v).
\end{equation} 

This together with \eqref{5eq2} implies that $4(1-n+\csep(n))\leq v+2(\csep(n)-v)$, and thus we obtain
\begin{equation}\label{5eq4}
2\csep(n)-3n+4\leq n-v.
\end{equation}

From $G$, delete the vertices of $P$ along with the edges that are incident to them. From the definition of $\csep(n-v)$, we get $\csep(n)-v-(v_{3}+2v_{4})\leq \csep(n-v)$, which together with (\ref{5eq1}) implies that $
\csep(n)\leq \csep(n-v)+2v-4$. By induction hypothesis, $\csep(n-v)\leq 2(n-v)-2\sqrt{n-v}$, and so 
\begin{equation}\label{5eq5} 
\csep(n)\leq (2n-4)-2\sqrt{n-v}. 
\end{equation}
Using (\ref{5eq4}) it follows that $\csep(n)\leq (2n-4)-2\sqrt{2\csep(n)-3n+4}$, and so 
\[
\csep(n)^{2}-4n\csep(n)+(4n^{2}-4n)\ge 0.
\]

Finally, since the solutions of the quadratic equation $x^{2}-4nx+(4n^{2}-4n)=0$ are $x=2n\pm 2\sqrt{n}$ and $\csep(n)<2n$, therefore it follows that $\csep(n)\leq 2n-2\sqrt{n}$.
\end{proof}

The arguments given in the proof of lower bound in Theorem \ref{circular} establish the following lower bound on the maximum separable contact number of $n$ translates of any convex domain. Note that we can drop the assumption of $\oo$-symmetry due to Lemma \ref{minkowski}. 

\begin{remark}\label{smooth-contact}
For any convex domain $\K$, we have $\csep (\K,n,2) \ge \floor{2n-2\sqrt{n}}$. 
\end{remark}

\subsection{Smooth $A$-domains and their maximum separable contact numbers}\label{sec:adomain}
Before we give the definition of an $A$-domain, note that although Birkhoff orthogonality is a non-symmetric relation in general, it turns out to be symmetric in Euclidean spaces. That is, for any $\x, \y\in \E^d$, $\x\dashv_{\B^d}\y$ holds if and only if $\y\dashv_{\B^d}\x$ holds. 

\begin{definition}\label{a-domain}
Let $\mathbf{A}\subseteq \E^2$ be an $\oo$-symmetric convex domain, $\B$ a circular disk centered at $\oo$ and $c, -c, c', -c'$ non-overlapping arcs on $\bd \B \cap \bd \mathbf{A}$ such that for any $\x\in c$ there exists $\x'\in c'$ with $\x\dashv_{\B}\x'$ and vice versa. Then we call $\mathbf{A}$ an Auerbach domain, or simply an $A$-domain.  
\end{definition}

\begin{figure}[t]
\centering
\includegraphics[scale=.7]{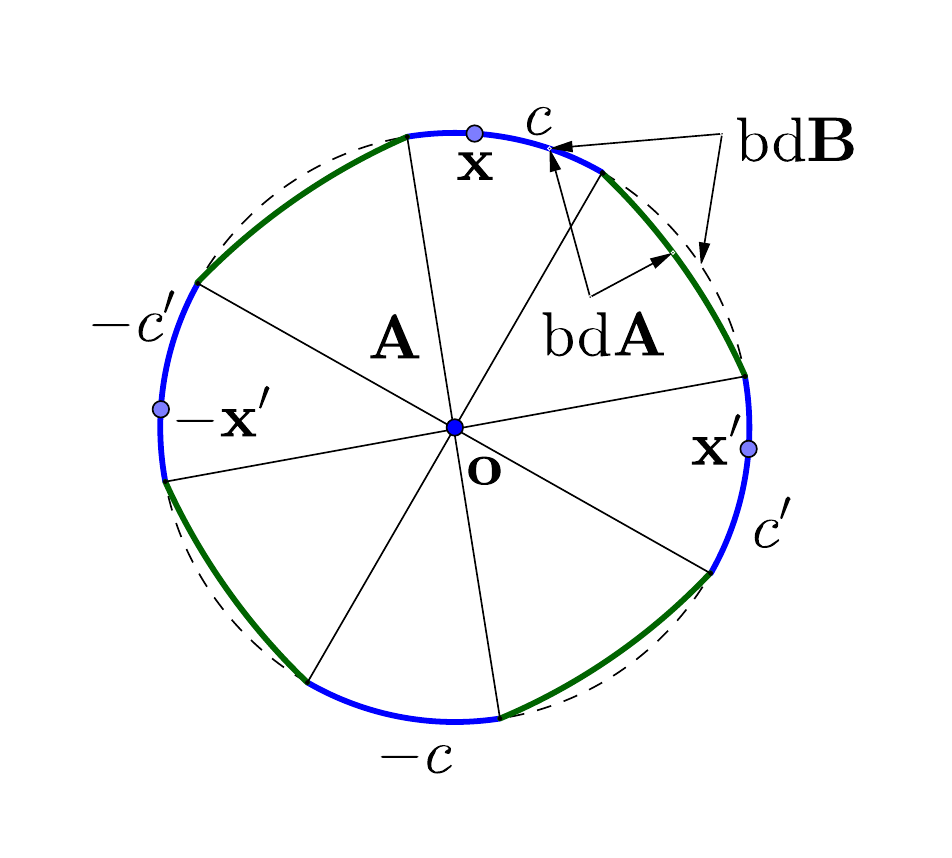}\ \ \ \ \includegraphics[scale=.7]{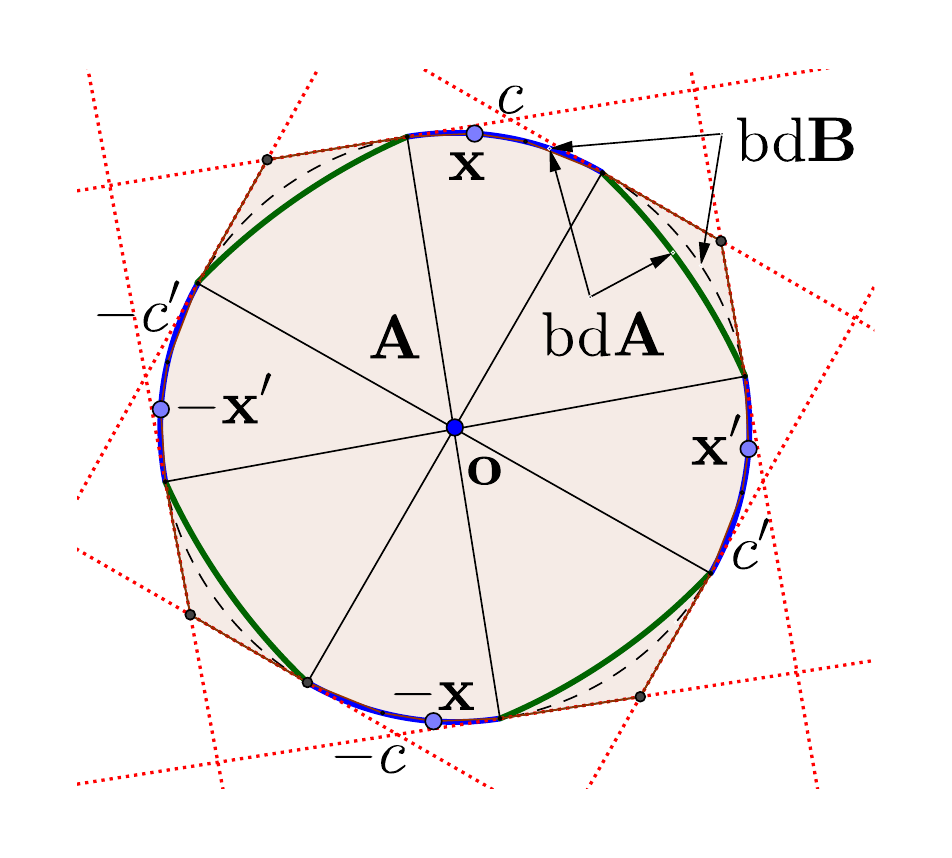}

\vspace{-.7cm}

\caption{Definition \ref{a-domain} illustrated. An $A$-domain $\mathbf{A}$ with circular pieces $c$, $c'$, $-c$ and $-c'$ lying on the boundary of the circular disk $\B$. Note that $\mathbf{A}$ is not necessarily contained in $\B$. However, due to convexity, $\mathbf{A}$ must lie in the shaded region determined by the tangent lines at the end points of the circular pieces.}
\label{fig:a-domain}
\end{figure}



Figure \ref{fig:a-domain} illustrates Definition \ref{a-domain}. Clearly, for any $\x\in c$ there exist antipodes $\x', -\x'\in \bd \B$ with $\x\dashv_{\B}\x'$ and $\x\dashv_{\B}-\x'$. From Definition \ref{a-domain}, we must have $\x'\in c'$ and $-\x'\in -c'$. Moreover, an analogous statement holds for any $\x'\in c'$. Therefore, an $A$-domain $\mathbf{A}$ can be thought of as an $\oo$-symmetric convex domain in $\E^2$ such that $\bd \mathbf{A}$ contains two pairs of antipodal circular arcs all lying on the same circle and with each pair being Birkhoff orthogonal to the other in $\E^2$. Note that this definition does not exclude the case when more than one set of such arcs occurs on $\bd \mathbf{A}$, in which case we choose the set of four arcs arbitrarily, or even when $\mathbf{A}$ is a circular disk. Given an $A$-domain $\mathbf{A}$, we call the four circular arcs $c, -c, c', -c'$ chosen on its boundary, the {\it circular pieces} of $\mathbf{A}$ and write 
$${\rm cir}(\mathbf{A}) = c\cup (-c) \cup c'\cup (-c').$$ 
Clearly, if $\x, \y \in {\rm cir}(\mathbf{A})$, then $\x\dashv_{\mathbf{A}}\y$ holds if and only if $\x\dashv_{\B}\y$ holds. We observe that any $\x\in {\rm cir}(\mathbf{A})$ belongs to an Auerbach basis of $\mathbf{A}$. Furthermore, any Auerbach basis of $\mathbf{A}$ is either contained in ${\rm cir}(\mathbf{A})$ or $\bd \mathbf{A}\setminus {\rm cir}(\mathbf{A})$. 

The reason behind defining $A$-domains is two-fold. First, in the next section we prove that any smooth $\oo$-symmetric strictly convex domain can be approximated arbitrarily closely by an $A$-domain. This leads to an exact computation of the maximum separable contact number for any packing of $n$ translates of a smooth strictly convex domain. Second, one can explicitly construct a $B$-measure on the boundary of each $A$-domain that very closely mirrors the properties of the Euclidean angle measure. We give the construction below.  

Let $\mathbf{A}$ be an $A$-domain with circular pieces $c$, $-c$, $c'$ and $-c'$ lying on the boundary of a circular disk $\B$ and $e$ denote the Euclidean angle measure. Then $e(c) = e(c') = e(-c) = e(-c')$ holds and we define an angle measure $m$ on $\bd \mathbf{A}$ as follows. For any arc $a\subseteq \bd \mathbf{A}$, define  
\begin{equation}\label{eq:angle}
m(a) = 2\pi\frac{e(a\cap {\rm cir}(\mathbf{A}))}{e({\rm cir}(\mathbf{A}))}.
\end{equation}

Note that $m$ assigns a measure of $\pi/2$ to each of the designated circular pieces on $\bd \mathbf{A}$ and a measure of $0$ to the rest of $\bd \mathbf{A}$ (including any circular arcs not included among the circular pieces), that is, $m(c) = m(c') = m(-c) = m(-c')=\pi/2$ and $m(\bd \mathbf{A} \setminus {\rm cir}(\mathbf{A})) = 0$. It is easy to check that $m$ satisfies properties (i-iii) of Definition \ref{def:angle}, as well as the following property. 

\begin{lemma}\label{b-measure}
Let $\mathbf{A}$ be an $A$-domain and $m$ the angle measure defined on $\bd \mathbf{A}$ by \eqref{eq:angle}. Also, let $\x , \y \in \bd \mathbf{A}$ be such that $\x\dashv_{\mathbf{A}}\y$. Then 
\begin{equation}\label{eq:property}
m([\x , \y]_{\mathbf{A}}) = \frac{\pi}{2}.
\end{equation}
In other words, $m$ is a $B$-measure in $(\R^{2}, \norm{\cdot}_{\mathbf{A}})$ and every $A$-domain is a $B$-domain. 
\end{lemma}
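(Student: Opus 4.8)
The plan is to reduce the claim to a count of how much of $\mathrm{cir}(\mathbf{A})$ an arc between two Birkhoff-orthogonal boundary points sweeps, because the measure $m$ of \eqref{eq:angle} is carried entirely by the circular pieces. Let $w$ be the common Euclidean central angle of the four congruent pieces $c,-c,c',-c'$, so that $e(\mathrm{cir}(\mathbf{A}))=4w$ and $m$ puts mass $\pi/2$ on each piece and $0$ on $\bd\mathbf{A}\setminus\mathrm{cir}(\mathbf{A})$. Since $c'$ is the image of $c$ under the Euclidean rotation by $\pi/2$ about $\oo$ (this is exactly what Birkhoff orthogonality of $c$ and $c'$ on $\B$ forces), the four pieces occupy four consecutive quarter-sectors. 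Parametrizing $\bd\mathbf{A}$ by the angular position $\psi\in[0,2\pi)$ of its points as seen from $\oo$, the boundary then decomposes, with period $\pi/2$, into a circular arc of angular width $w$ followed by a non-circular gap of width $\pi/2-w$.

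First I would locate $\y$ relative to $\x$. Fix $\x,\y\in\bd\mathbf{A}$ with $\x\dashv_{\mathbf{A}}\y$. By the definition of Birkhoff orthogonality the line through $\x$ in direction $\y$ supports $\mathbf{A}$ at $\x$, so $\y$ is parallel to a supporting line of $\mathbf{A}$ at $\x$; writing $\theta$ for a corresponding outer normal direction at $\x$, the angular position of $\y$ is $\theta+\pi/2$ or $\theta-\pi/2$. I treat the case where $\y$ sits at angular position $\theta+\pi/2$ and let $\gamma$ be the counterclockwise arc from $\x$ to $\y$; the other case is identical with ``clockwise'' replacing ``counterclockwise''.

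Next I would show $\gamma=[\x,\y]_{\mathbf{A}}$. The angular position $\sigma$ and the normal direction $\theta$ of $\x$ satisfy $|\theta-\sigma|<\pi/2$: they agree on circular pieces, and on a gap both lie in the same interval of width $\pi/2-w$. Hence the angular position $\theta+\pi/2$ of $\y$ lies strictly between $\sigma$ and $\sigma+\pi$, so $\y$ lies strictly inside the counterclockwise half-boundary from $\x$ to $-\x$. Because $\z\mapsto-\z$ maps this half onto its complement and preserves $\norm{\cdot}_{\mathbf{A}}$-length, each half carries exactly half the perimeter; thus $\gamma$, a proper sub-arc of one half, is the strictly shorter of the two arcs joining $\x$ and $\y$, i.e. $\gamma=[\x,\y]_{\mathbf{A}}$.

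Finally I would compute $m(\gamma)$ by counting circular mass, using the period-$\pi/2$ picture; two cases suffice, with the fourfold symmetry handling the rest. If $\x$ lies on a circular piece then $\theta=\sigma$, the point $\y$ lies on the next piece a quarter-sector ahead, and $\gamma$ sweeps the tail of the first piece and the head of the second, with central angles summing to exactly $w$. If $\x$ lies in a gap then $\y$ lies in the next gap and $\gamma$ engulfs precisely one whole circular piece and no part of any other, again giving central angle $w$; the junction (non-smooth) points cause no trouble, since there the extra freedom in $\y$ keeps it inside a gap. In all cases $e(\gamma\cap\mathrm{cir}(\mathbf{A}))=w$, so by \eqref{eq:angle} we get $m([\x,\y]_{\mathbf{A}})=2\pi\cdot w/(4w)=\pi/2$, which is \eqref{eq:property}; as $m$ already satisfies (i)--(iii), it is a $B$-measure and $\mathbf{A}$ is a $B$-domain. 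The main obstacle is precisely the bookkeeping of the last two steps: converting the tangential Birkhoff condition at $\x$ into the angular position of $\y$, and verifying both that the correct (shorter) arc is selected and that it sweeps exactly one circular piece's worth of mass.
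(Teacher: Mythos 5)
Your proposal is correct and follows essentially the same route as the paper's proof: a case distinction on whether $\x$ lies on a circular piece or in a gap of $\bd\mathbf{A}\setminus{\rm cir}(\mathbf{A})$, concluding in each case that the arc $[\x,\y]_{\mathbf{A}}$ captures exactly one piece's worth of circular (Euclidean) measure, so that \eqref{eq:angle} yields $2\pi\cdot w/(4w)=\pi/2$. Your version is in fact more careful than the paper's at two points it glosses over --- the verification via central symmetry that the chosen arc really is the shorter one, and the behaviour at the junction points of the circular pieces.
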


\begin{proof}
Let $c$, $-c$, $c'$ and $-c'$ be the circular pieces of $\mathbf{A}$ lying on the boundary of a circular disk $\B$. Let $\x, \y\in \bd\mathbf{A}$ be such that $\x\dashv_{\mathbf{A}}\y$. Two cases arise. If $\x\in {\rm cir}(\mathbf{A})$, then $\x\dashv_{\B}\y$ holds. Without loss of generality assume that $\x\in c$. Then either $\y\in c'$ or $\y\in -c'$, and $m([\x , \y]_{\mathbf{A}}) = m([\x , \y]_{\mathbf{A}}\cap {\rm cir}(\mathbf{A})) = m(c) = \pi/2$. If $\x\in \bd \mathbf{A}\setminus {\rm cir}(\mathbf{A})$, then necessarily $\y\in \bd \mathbf{A}\setminus {\rm cir}(\mathbf{A})$ and $[\x , \y]_{\mathbf{A}}$ contains exactly one of the circular pieces of $\mathbf{A}$, say $c$. Thus once again, $m([\x , \y]_{\mathbf{A}}) = m(c) = \pi/2$.
\end{proof}


As $\csep(\cdot, n, 2)$ is invariant under affine transformations, Theorem \ref{circular} and Lemma \ref{b-measure} give: 

\begin{corollary}\label{a-dom}
If $\mathbf{A}$ is (an affine image of) a smooth $A$-domain in $\E^2$ and $n\geq 2$, we have $\csep (\mathbf{A},n,2) = \csep (n,2) = \floor{2n-2\sqrt{n}}$.
\end{corollary}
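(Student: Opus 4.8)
The plan is to combine Lemma~\ref{b-measure} with Theorem~\ref{circular} and then appeal to the affine invariance of the maximum separable contact number. First I would observe that Lemma~\ref{b-measure} shows every $A$-domain is a $B$-domain, since the angle measure $m$ built from \eqref{eq:angle} is a genuine $B$-measure in $(\R^2, \norm{\cdot}_{\mathbf{A}})$. Hence a smooth $A$-domain is in particular a smooth $B$-domain, and Theorem~\ref{circular} applies to it verbatim, yielding $\csep(\mathbf{A}, n, 2) = \floor{2n - 2\sqrt{n}}$ for all $n \ge 2$.

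The only remaining point concerns the phrase ``an affine image of a smooth $A$-domain.'' The key subtlety is that being an $A$-domain is \emph{not} itself an affine-invariant property: a nonsingular affine map sends the circular pieces $c, -c, c', -c'$ to elliptical arcs, so the image need not be a $B$-domain, and Theorem~\ref{circular} cannot be applied to it directly. What rescues the argument is that the quantity $\csep(\cdot, n, 2)$ is invariant under affine transformations, as noted immediately before the statement. Thus for any nonsingular affine map $T$, I would write $\csep(T(\mathbf{A}), n, 2) = \csep(\mathbf{A}, n, 2) = \floor{2n - 2\sqrt{n}}$, which establishes the corollary for affine images as well.

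I expect essentially no genuine obstacle here: the corollary is a formal consequence of the two cited results together with affine invariance. The one thing to state carefully is the order of operations in the quantifiers --- one proves the formula on the $\oo$-symmetric $A$-domain $\mathbf{A}$ first, where the $B$-measure genuinely lives and the hypotheses of Theorem~\ref{circular} can be checked, and only then transports the equality along $T$, rather than attempting to verify the $B$-domain hypotheses on the affinely distorted copy.
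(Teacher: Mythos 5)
Your argument is correct and is essentially the paper's own proof: Lemma~\ref{b-measure} shows every smooth $A$-domain is a smooth $B$-domain, Theorem~\ref{circular} then yields $\csep(\mathbf{A},n,2)=\floor{2n-2\sqrt{n}}$, and affine invariance of $\csep(\cdot,n,2)$ transports the equality to affine images. One small correction that does not affect the validity of your proof: an affine image of a $B$-domain \emph{is} again a $B$-domain (push the $B$-measure forward along the linear map; Birkhoff orthogonality and shorter boundary arcs correspond under that map), so your stated reason for routing through affine invariance is inaccurate, although the route itself is exactly the one the paper takes.
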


\subsection{Maximum separable contact numbers of smooth strictly convex domains}\label{sec:strictly}
We begin with the definition of Hausdorff distance between two convex bodies. 

\begin{definition}
Given two (not necessarily $\oo$-symmetric) convex bodies $\K$ and $\LL$ in $\E^d$, the Hausdorff distance between them is defined as  
\[h(\K, \LL)= \min \left\{\epsilon : \K\subseteq \LL+\epsilon\B^d , \LL\subseteq \K+\epsilon\B^d \right\}. 
\]
\end{definition}

It is well-known that $h(\cdot , \cdot)$ is a metric on the set of all $d$-dimensional convex bodies \cite[page 61]{Sc}. The first main result of this section shows that we can approximate any smooth $\oo$-symmetric strictly convex domain in $\E^2$ arbitrarily closely by an affine image of an $A$-domain with respect to the Hausdorff distance. In fact, the following theorem proves a stronger statement. 

\begin{figure}[t]
\centering
\includegraphics[scale=.6]{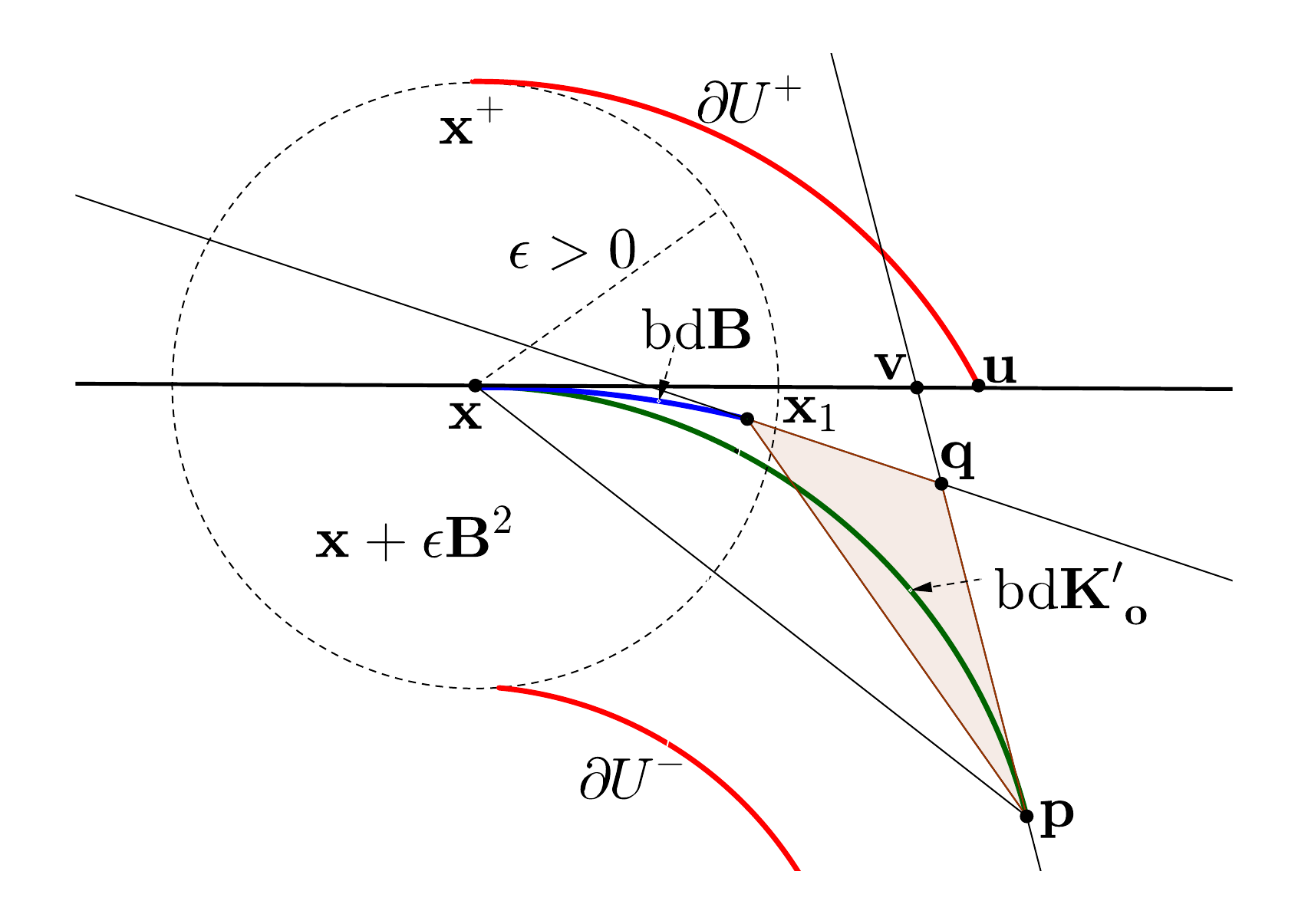}

\vspace{-.5cm}

\caption{Replacing a part of the boundary of the smooth $\oo$-symmetric strictly convex domain $\K'_{\oo}$ in the proof of Theorem \ref{dense} by a circular arc. The circular arc on $\bd\B$ is colored blue, while the green arc represents a part of the boundary of $\K_{\oo}'$. The red arcs represent the outer and inner boundary of the annulus $\bd \K_{\oo}' +\epsilon\B^2$. The construction is independent of whether $\x_1 \in \K_{\oo}'$ or not.}
\label{fig:circle}
\end{figure}

\begin{theorem}\label{dense}
Affine images of smooth strictly convex $A$-domains are dense (in the Hausdorff sense) in the space of smooth $\oo$-symmetric strictly convex domains. Moreover, given any smooth $\oo$-symmetric strictly convex domain $\K_{\oo}$, we can construct an affine image $\mathbf{A}'$ of a smooth strictly convex $A$-domain $\mathbf{A}$ such that the length of $\bd\mathbf{A}' \cap\bd \K_{\oo}$ can be made arbitrarily close to the length of $\bd\K_{\oo}$. 
\end{theorem}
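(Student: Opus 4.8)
The plan is to reduce everything to one explicit cut-and-paste on a convenient affine copy of $\K_{\oo}$ and then transport it back. First I would fix an Auerbach basis $\{\x,\y\}$ of $(\R^2,\norm{\cdot}_{\K_{\oo}})$, which exists by Plichko's theorem \cite{Pl}, and apply the linear map $T$ with $T\x=\e_1$ and $T\y=\e_2$. Writing $\K_{\oo}'=T(\K_{\oo})$, the defining property of an Auerbach basis forces the supporting lines of $\K_{\oo}'$ at $\pm\e_1,\pm\e_2$ to be the sides of the square $[-1,1]^2$, each touched at its midpoint. Consequently the unit circle $\bd\B$ is tangent to $\bd\K_{\oo}'$ at each of $\pm\e_1,\pm\e_2$, these four points lie at Euclidean distance $1$ from $\oo$ in the two Birkhoff-orthogonal directions, and they occupy the angular positions $0,\tfrac{\pi}{2},\pi,\tfrac{3\pi}{2}$ on $\bd\B$. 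It therefore suffices to build a smooth strictly convex $A$-domain $\mathbf{A}$ with circle $\B$ that agrees with $\K_{\oo}'$ off four short arcs; then $\mathbf{A}'=T^{-1}(\mathbf{A})$ is the required affine image, because $T^{-1}$ distorts both $h(\cdot,\cdot)$ and arc length by only a bounded factor.

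For the construction, fix a small $\delta\in(0,\pi/4)$ and replace $\bd\K_{\oo}'$ on four arcs centred at $\pm\e_1,\pm\e_2$ by the arcs of $\bd\B$ spanning the normal angles $[-\delta,\delta]$ about each of those directions. Since on $\bd\B$ the angular position equals the normal angle, the arc $c$ about $\e_1$ is carried exactly onto the arc $c'$ about $\e_2$ by the rotation through $\tfrac{\pi}{2}$; as this rotation realizes the relation $\dashv_{\B}$, the four inserted arcs $c,-c,c',-c'$ form a valid configuration for Definition \ref{a-domain}. Between each circular arc and the untouched part of $\bd\K_{\oo}'$ I would insert a short smooth strictly convex transition arc along which the outward normal turns monotonically from the circular endpoint to the matching point of $\bd\K_{\oo}'$, choosing the four transitions in antipodal pairs so that the whole curve stays $\oo$-symmetric. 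Keeping $\delta$ and the transitions short confines the entire modification to the annulus $\bd\K_{\oo}'+\epsilon\B^2$.

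I would then verify that $\mathbf{A}$ is a smooth strictly convex $A$-domain. Its boundary is $C^1$ because the circular, transition, and original pieces are glued with matching tangent lines; it is strictly convex because the outward normal angle increases strictly and without repetition all the way around; and the circular pieces $c,-c,c',-c'$ lie on the single circle $\bd\B$ in the Birkhoff-orthogonal pattern demanded by Definition \ref{a-domain}. Since $\mathbf{A}$ coincides with $\K_{\oo}'$ except on four arcs of small total length and lies within the $\epsilon$-annulus of it, applying $T^{-1}$ makes $h(\mathbf{A}',\K_{\oo})$ arbitrarily small and renders the length of $\bd\mathbf{A}'\cap\bd\K_{\oo}=T^{-1}\!\left(\bd\mathbf{A}\cap\bd\K_{\oo}'\right)$ arbitrarily close to that of $\bd\K_{\oo}$; these are exactly the density statement and the moreover clause.

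The main obstacle is producing the transition arcs from the hypotheses actually available: $\bd\K_{\oo}'$ is $C^1$ (by smoothness) and strictly convex, but need not be $C^2$, so curvature may be undefined and one cannot simply interpolate radii of curvature. What must be proved is a local convex-interpolation lemma: given an endpoint of the circular arc and a nearby point of $\bd\K_{\oo}'$ together with their prescribed tangent directions, there is a strictly convex $C^1$ arc with monotonically turning normal joining them inside the annulus. Here both cases indicated in Figure \ref{fig:circle}, according to whether $\bd\K_{\oo}'$ locally lies inside or outside $\bd\B$ near $\e_1$, have to be covered. I expect to control this by taking the circular arc much shorter than the transition window, so that by strict convexity the normal of $\bd\K_{\oo}'$ has already turned past the slope of the short circular arc; one can then prescribe a monotone normal-angle profile and realize it by a convex arc with the given endpoints and end-tangents, whose existence is a standard fact about planar convex curves.
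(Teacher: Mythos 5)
Your proposal follows the paper's approach in all essentials: normalize by an affine map so that the minimal-area circumscribing parallelogram (equivalently, an Auerbach basis) becomes a square, insert short arcs of the inscribed circle $\B$ near the four tangency points $\pm\x,\pm\y$, connect them to the untouched part of $\bd\K_{\oo}'$ by smooth strictly convex transition curves chosen in antipodal pairs, keep everything inside the annulus $\bd\K_{\oo}'+\epsilon\B^2$, and observe that the $\tfrac{\pi}{2}$-rotation pairing of the circular arcs gives the Birkhoff-orthogonality pattern required of an $A$-domain. The one point where you are more demanding than the paper is the final "obstacle": the paper does not prove a general convex-interpolation lemma either. It arranges the geometry so that the interpolation is trivially possible -- it picks $\p\in\bd\K_{\oo}'$ whose tangent line meets the segment $(\x,\mathbf{u})$ at a point $\mathbf{v}$, then picks $\x_1\in\bd\B$ whose tangent meets $(\p,\mathbf{v})$ at $\mathbf{q}$, so that $\p$, $\mathbf{q}$, $\x_1$ form a triangle $\Delta$; the two end-tangents are then two sides of $\Delta$, and the existence of (infinitely many) smooth strictly convex curves inside $\Delta$ joining $\x_1$ to $\p$ with those prescribed end-tangents is immediate. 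Your "prescribe a monotone normal-angle profile" step is the same standard fact, but the paper's triangle construction makes the required containment in the annulus and the tangent-matching explicit without any appeal to curvature or $C^2$ regularity, which is exactly the worry you raised; you could close your own gap by adopting that triangle trick verbatim. With that addition your argument is complete and essentially identical to the paper's.
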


\begin{proof}
Let $\K_{\oo}$ be a smooth $\oo$-symmetric strictly convex domain and $\epsilon>0$ be sufficiently small. We describe the construction of a smooth strictly convex $A$-domain $\mathbf{A}$ with the property that $h(\mathbf{A}', \K_{\oo})\le \epsilon$, for some image $\mathbf{A}'=T(\mathbf{A})$ of $\mathbf{A}$ under an invertible linear transformation $T:\E^2 \to \E^2$. Let $\K_{\oo}' := T^{-1}(\K_{\oo})$. We note that $\K_{\oo}'$ is a smooth $\oo$-symmetric strictly convex domain in $\E^2$. It is sufficient to show that $\bd \mathbf{A}$ lies in the annulus $\bd \K_{\oo}'+\epsilon \B^2$ (see Figure \ref{fig:circle}) and the length of $\bd\mathbf{A}\cap\bd \K_{\oo}'$ can be made arbitrarily close to the length of $\bd\K_{\oo}'$ during the construction. 

We choose $T$ so that the minimal area parallelogram $\mathbf{P}$ containing $\K_{\oo}'$ is is a square. Let $\x, \y, -\x, -\y\in \bd \K_{\oo}'$ be the midpoints of the sides of $\mathbf{P}$. Then $\{\x, \y\}$ is an Auerbach basis of $(\R^2 , \norm{\cdot}_{\K_{\oo}'})$ \cite{Pl} and, by strict convexity, $\bd \K_{\oo}'$ intersects $\bd \mathbf{P}$ only at points $\x$, $-\x$, $\y$ and $-\y$. Let $\B$ be the circular disk centered at $\oo$ that touches $\bd \mathbf{P}$ at the points $\x$, $\y$, $-\x$, $-\y$. Without loss of generality, we may assume that the side of $\mathbf{P}$ passing through $\x$ is horizontal and $\y$ lies on the clockwise arc on $\bd \K_{\oo}'$ from $\x$ to $-\x$. 

Let $U_\epsilon = \bd \K_{\oo}'+\epsilon \B^2$ be the $\epsilon$-annular neighbourhood of $\bd\K_{\oo}'$ with outer boundary curve $\partial U^+$ and inner boundary curve $\partial U^-$. Let $\x^+$ be the unique point of intersection of $\partial U^+$ and $\x+ \epsilon \B^2$. Moving clockwise along $\partial U^+$ starting from $\x^+$, let $\mathbf{u}$ be the first point where $\partial U^+$ intersects $\bd \mathbf{P}$. Starting from $\x$ and moving along $\bd \K_{\oo}'$ clockwise, choose a point $\p\in \bd \K_{\oo}'$ so that the tangent line supporting $\K_{\oo}'$ at $\p$ intersects $(\x, \mathbf{u})$. This unique point of intersection is represented by $\mathbf{v}$ in Figure \ref{fig:circle}. Note that by strict convexity of $\K_{\oo}'$, such a point $\p$ necessarily exists (for example, choose the point on $\bd \K_{\oo}'$ directly below $\mathbf{u}$) and any such $\p$ can be replaced by any point on the open arc $(\x, \p)_{\K_{\oo}'}$. Now choose a point $\x_1 \in \bd \B$ close to $\x$ in clockwise direction so that the line tangent to $\B$ at $\x_1$ intersects $(\p,\mathbf{v})$. In Figure \ref{fig:circle}, $\mathbf{q}$ denotes this point of intersection. Again note that such a point $\x_1$ necessarily exists as the line supporting $\B$ at $\x$ is horizontal. Moreover, $\x_1$ can be replaced by any point on the open arc $(\x, \x_1 )_{\B}$. Therefore, we may assume that $\x_1 \ne \mathbf{q}$ and so $\p$, $\q$ and $\x_1$ form a triangle $\Delta$. Thus there exists a (actually, infinitely many) smooth strictly convex curve $S\subseteq \Delta$ with endpoints $\x_1$ and $\p$ such that the convex domain 
\[\mathbf{A}_1 = \conv((\bd\K_{\oo}'\setminus ([\x, \p]_{\K_{\oo}'}\cup [-\x, -\p]_{\K_{\oo}'}))\cup S \cup[\x, \x_1 ]_{\B}\cup (-S) \cup [-\x, -\x_1 ]_{\B})
\]
obtained by replacing the antipodal boundary arcs $[\x, \p]_{\K_{\oo}'}$ and $[-\x, -\p]_{\K_{\oo}'}$ of $\K_{\oo}'$ with the antipodal circular arcs $[\x, \x_1 ]_{\B}$ and $[-\x, -\x_1 ]_{\B}$ and the smooth and strictly convex connecting curves $S$ and $-S$ is a smooth $\oo$-symmetric strictly convex domain with $\bd \mathbf{A}_1 \subseteq \bd \K_{\oo}' + \epsilon \B^2$. Repeat the procedure for $\mathbf{A}_1$, but this time move counterclockwise along $\bd \mathbf{A}_1$ starting from $\x$. The result is another smooth $\oo$-symmetric strictly convex domain $\mathbf{A}_2$ with $\bd \mathbf{A}_2 \subseteq \bd \K_{\oo}' + \epsilon \B^2$. 

\begin{figure}[t]
\centering
\hspace{-3em}\includegraphics[scale=1]{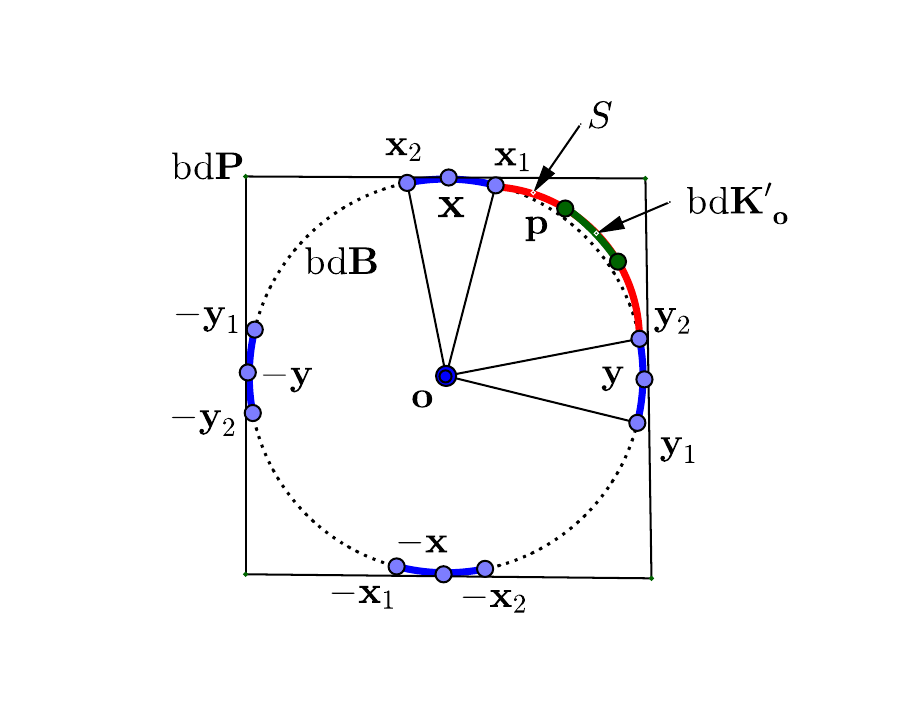}\hspace{-3em}\includegraphics[scale=1]{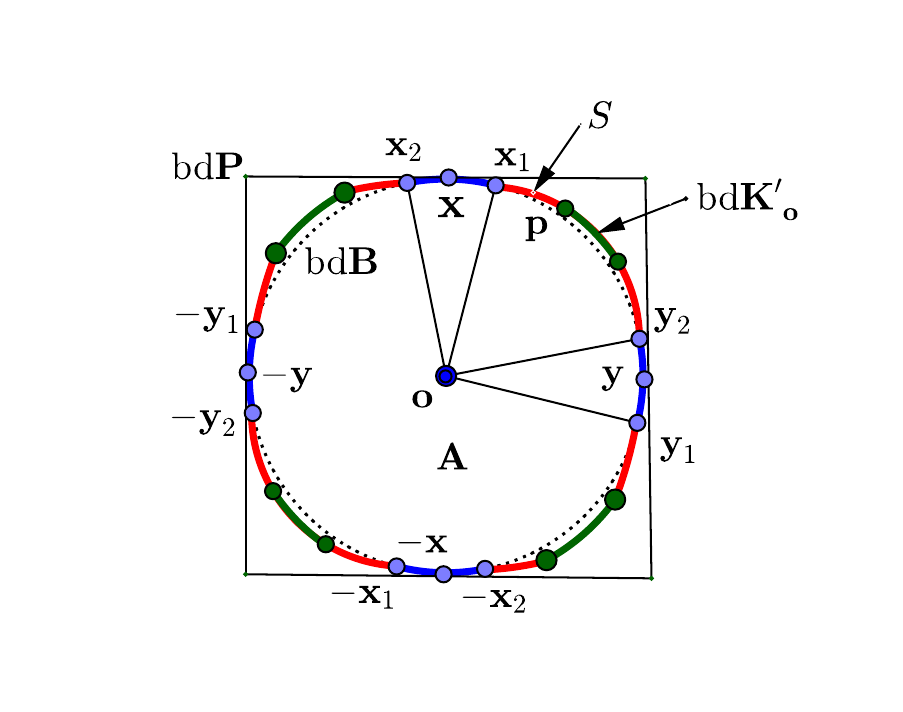}

\vspace{-1cm}

\caption{Construction of a smooth strictly convex $A$-domain approximating the smooth $\oo$-symmetric strictly convex domain $\K_{\oo}'$ in the proof of Theorem \ref{dense}. The circular arcs are colored blue, while the green arcs represent parts of the boundary of $\K_{\oo}$. The red arcs represent smooth strictly convex connections.}
\label{fig:circle2}
\end{figure}

Let $[\x_1 , \x_2 ]_{\B}\subseteq \bd \mathbf{A}_2$ be the counterclockwise circular arc containing $\x$ (but not necessarily centered at $\x$) obtained in this way. We say that $[\x_1 , \x_2 ]_{\B}$ is a {\it replacement arc} for $\K_{\oo}'$ at $\x$ (and therefore, $[-\x_1 , -\x_2 ]_{\B}$ is a replacement arc for $\K_{\oo}'$ at $-\x$). Let $\y_1 , \y_2 \in \bd \B$ be such that $\x_1 \dashv_{\B}\y_1$ and $\x_2 \dashv_{\B}\y_2$ (and so $\y_1 \dashv_{\B}\x_1$, $\y_2 \dashv_{\B}\x_2$). By choosing $[\x_1 , \x_2 ]_{\B}$ small enough, we can ensure that $[\y_1 , \y_2 ]_{\B}$ is a replacement arc for $\K_{\oo}'$ at $\y$. Let $\mathbf{A}$ be the resulting convex domain as illustrated in Figure \ref{fig:circle2}. Then $\mathbf{A}$ is a smooth strictly convex $A$-domain with circular pieces $[\x_1 , \x_2 ]_{\B}$, $[-\x_1 , -\x_2 ]_{\B}$, $[\y_1 , \y_2 ]_{\B}$ and $[-\y_1 , -\y_2 ]_{\B}$. Clearly, $h(\mathbf{A},\K_{\oo}')\le \epsilon$. Furthermore, we can make the length of $\bd \mathbf{A}\cap \K_{\oo}'$ as close to the length of $\bd\K_{\oo}'$ as we like. Therefore, $h(\mathbf{A}',\K_{\oo})\le \epsilon$  and we can make the length of $\bd \mathbf{A}'\cap \K_{\oo}$ as close to the length of $\bd\K_{\oo}$ as we like.
\end{proof}

In section \ref{sec:hadwiger}, we found that the separable Hadwiger number remains constant over the class of smooth convex domains. Corollary \ref{a-dom} shows that the maximum separable contact number of any packing of $n$ translates of an affine image of a smooth $A$-domain is the same as the corresponding number for the circular disk. Secondly, we observed that affine images of smooth strictly convex $A$-domains form a dense subset of the space of smooth $\oo$-symmetric strictly convex domains under Hausdorff metric. It is, therefore, natural to ask if the maximum separable contact number also remains constant over the set of all smooth strictly convex domains (as we can drop $\oo$-symmetry due to Lemma \ref{minkowski}). We conclude this section with a proof of part (B) of Theorem \ref{mega}, showing that this is indeed the case.

\begin{corollary}\label{strictly}
Let $\K$ be a smooth strictly convex domain in $\E^2$ and $n\geq 2$. Then  $\csep (\K,n,2) = \floor{2n-2\sqrt{n}}$.
\end{corollary}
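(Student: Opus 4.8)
The plan is to reduce to the $\oo$-symmetric case and then to compare $\K_{\oo}$ with a nearby $A$-domain, transporting a maximal packing across the comparison. By Lemma \ref{minkowski} the contact graphs of $\mathcal{P}$ and $\mathcal{P}_{\oo}$ agree, so $\csep(\K, n, 2) = \csep(\K_{\oo}, n, 2)$, and $\K_{\oo}$ is again smooth and strictly convex; hence it suffices to treat the $\oo$-symmetric case. The lower bound $\csep(\K_{\oo}, n, 2) \ge \floor{2n - 2\sqrt{n}}$ is already supplied by Remark \ref{smooth-contact}. Thus the entire content lies in the matching upper bound $\csep(\K_{\oo}, n, 2) \le \floor{2n - 2\sqrt{n}}$.

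To prove it, I would fix a totally separable packing $\mathcal{P} = \{\x_1 + \K_{\oo}, \ldots, \x_n + \K_{\oo}\}$ attaining $c := \csep(\K_{\oo}, n, 2)$ contacts and, for each pair, fix one separating line witnessing total separability. I would then record a \emph{finite} set $D \subseteq \bd\K_{\oo}$ of critical directions: for every touching pair $(i,j)$ the (by smoothness and strict convexity, unique) contact point yields the two points $\pm(\x_j - \x_i)/2 \in \bd\K_{\oo}$, and for every chosen separating line tangent to some translate I add the corresponding unique point of tangency. There are at most $\binom{n}{2}$ pairs, each contributing finitely many points, so $D$ is finite. I would also set $\delta > 0$ to be the minimum over all non-touching pairs of their gap, together with the minimum positive clearance of the chosen separating lines from the translates they do not touch.

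Next, by Theorem \ref{dense} choose a smooth strictly convex $A$-domain $\mathbf{A}$ and an affine image $\mathbf{A}' = T(\mathbf{A})$ with $h(\mathbf{A}', \K_{\oo}) \le \epsilon$ as small as desired; the construction makes $\bd\mathbf{A}'$ coincide with $\bd\K_{\oo}$ outside four arbitrarily short arcs near the midpoints $\pm\x,\pm\y$ of the circumscribing square, and these four midpoints themselves, with their supporting lines, are common to $\bd\mathbf{A}'$ and $\bd\K_{\oo}$. Each point of $D$ either equals one of $\pm\x,\pm\y$ (where the two bodies already agree in position and supporting line) or is at positive distance from them; since $D$ is finite, I would shrink the four arcs so short that no point of $D$ of the second type lies on a modified arc, and simultaneously take $\epsilon < \delta/2$. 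Keeping the centers $\x_i$ fixed and replacing each $\K_{\oo}$ by $\mathbf{A}'$ then (i) preserves every contact, because at each contact point $\mathbf{A}'$ and $\K_{\oo}$ agree in position and supporting line; (ii) creates no new overlap or contact, since $\mathbf{A}' \subseteq \K_{\oo} + \epsilon \B^2$ while non-touching pairs are more than $\delta > 2\epsilon$ apart; and (iii) preserves total separability, since each chosen separating line is tangent to the translates only in directions of $D$ (where $\mathbf{A}'$ agrees with $\K_{\oo}$) and otherwise has clearance exceeding the outward bulge $\epsilon$. Hence $\{\x_1 + \mathbf{A}', \ldots, \x_n + \mathbf{A}'\}$ is a totally separable packing of $n$ translates of $\mathbf{A}'$ with exactly $c$ contacts, so Corollary \ref{a-dom} gives $c \le \csep(\mathbf{A}', n, 2) = \floor{2n - 2\sqrt{n}}$, as required.

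The main obstacle is item (iii): showing that total separability genuinely survives the replacement. Contacts are comparatively easy to preserve because they are localized at single boundary points where $\mathbf{A}'$ can be made to agree with $\K_{\oo}$; but a single separating line may touch several translates at once, and an unmonitored outward bulge at such a tangency could, a priori, push a body across the line. The care required is precisely to fold \emph{all} tangency directions of the chosen separating lines into $D$, route them (by shrinking the four arcs) onto the unmodified part of $\bd\K_{\oo}$ where the bulge vanishes, and drive the Hausdorff error below the global clearance; once this finite avoidance is arranged while retaining the $A$-domain (Auerbach) structure of Theorem \ref{dense}, no interior ever crosses a separating line and the comparison with Corollary \ref{a-dom} closes the argument.
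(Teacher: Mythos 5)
Your proposal is correct and follows essentially the same route as the paper's proof: reduce to the $\oo$-symmetric case via Lemma \ref{minkowski}, get the lower bound from Remark \ref{smooth-contact}, and for the upper bound use Theorem \ref{dense} to build an affine image of a smooth strictly convex $A$-domain whose modified arcs avoid the finite set of contact points and separating-line tangencies, so that replacing each translate yields a totally separable packing with at least as many contacts, and conclude with Corollary \ref{a-dom}. Your set $D$ is exactly the paper's set ${\rm Con}({\cal P})$ of contact positions (which likewise records tangencies with the lines in ${\cal H}$ as well as translate--translate contacts), and your explicit $\delta$--$\epsilon$ clearance bookkeeping merely fills in details the paper leaves implicit.
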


\begin{proof}
By Lemma \ref{minkowski}, it suffices to prove the result for smooth $\oo$-symmetric strictly convex domains. Let $\K_{\oo}$ be such a domain. By Remark \ref{smooth-contact}, the lower bound $\csep(\K_{\oo})\ge \floor{2n-2\sqrt{n}}$ holds. Let $\cal{P}$ be a maximal contact totally separable packing of $n$ translates of $\K_{\oo}$ and $\cal{H}$ be a finite set of lines in $\E^2$ disjoint from the interiors of the translates in $\cal{P}$ such that any two translates are separated by at least one line in $\cal{H}$. We will construct a smooth strictly convex $A$-domain $\mathbf{A}$ such that $\csep(\K_{\oo} , n, 2) \le \csep(\mathbf{A}', n, 2)$, for $\mathbf{A}'=T(\mathbf{A})$, where $T:\E^2 \to \E^2$ is a properly chosen invertible linear transformation. 

Let $\K_{\oo}+\mathbf{c}\in \cal{P}$. By strict convexity of $\K_{\oo}$, there exist finitely many points $\mathbf{c}_1 \ldots, \mathbf{c}_m \in \bd(\K+\mathbf{c})$ where $\K_{\oo}+\mathbf{c}$ touches other translates in $\cal{P}$ and the lines in $\cal{H}$. Then we call the points $\mathbf{c}_i - \mathbf{c} \in \bd \K_{\oo}$, $i=1,\ldots, m$, the {\it contact positions} on $\K_{\oo}$ corresponding to $\K_{\oo}+\x \in \cal{P}$. Let ${\rm Con}(\cal{P})$ denote the set of all contact positions on $\K_{\oo}$ corresponding to all the translates in $\cal{P}$. 

Let $\x$, $\y$, $\mathbf{P}$ and $\B$ be as in the proof of Theorem \ref{dense}. Using Theorem \ref{dense}, construct a smooth strictly convex $A$-domain $\mathbf{A}$ with circular pieces $c=[\x_1 , \x_2 ]_{\mathbf{A}}= [\x_1 , \x_2 ]_{\B}$, $-c=[-\x_1 , -\x_2 ]_{\mathbf{A}}=[-\x_1 , -\x_2 ]_{\B}$, $c'=[\y_1 , \y_2 ]_{\mathbf{A}}=[\y_1 , \y_2 ]_{\B}$ and $-c'=[-\y_1 , -\y_2 ]_{\mathbf{A}}=[-\y_1 , -\y_2 ]_{\B}$. 

Using Theorem \ref{dense}, we can make $c=[\x_1 , \x_2 ]_{\mathbf{A}}$ sufficiently small so that 
\begin{equation}\label{eq1}
{\rm Con}({\cal{P}})\subseteq (\bd \mathbf{A}' \cap\bd\K_{\oo}) \cup \{\pm T(\x), \pm T(\y)\}.
\end{equation}
(Recall that both $\B$ and $\K_{\oo}' = T^{-1}(\K_{\oo})$ are supported by the sides of $\mathbf{P}$ at the points $\{\pm \x, \pm \y\}$.) Thus from \eqref{eq1}, the arrangement obtained by replacing each translate in $\cal{P}$ by the corresponding translate of $\mathbf{A}'$ is a totally separable packing with at least $\csep(\K_{\oo} , n, 2)$ contacts. 
\end{proof}


\small

\bigskip


\noindent K\'aroly Bezdek \\
\small{Department of Mathematics and Statistics, University of Calgary, Canada}\\
\small{Department of Mathematics, University of Pannonia, Veszpr\'em, Hungary\\
\small{E-mail: \texttt{bezdek@math.ucalgary.ca}}


\bigskip

\noindent Muhammad A. Khan \\
 \small{Department of Mathematics and Statistics, University of Calgary, Canada}\\
 \small{E-mail: \texttt{muhammkh@ucalgary.ca}}


\bigskip

\noindent Michael Oliwa \\
 \small{Department of Mathematics and Statistics, University of Calgary, Canada}\\
 \small{E-mail: \texttt{moliwa@ucalgary.ca}}

\end{document}